\newtheorem{theorem}{Theorem}[section]
\newtheorem{remark}[theorem]{Remark}
\newtheorem{assumption}[theorem]{Assumption}
\newtheorem{lemma}[theorem]{Lemma}
\newtheorem{proposition}[theorem]{Proposition}
\def \cF{{\mathcal F}}
\def \cL{{\mathcal L}}
\def \E{\mathsf{E}}
\def \EE{\widehat{\mathsf{E}}}
\def \P{\mathsf{P}}
\def \R{\mathbb{R}}
\def \F{\mathbb{F}}
\def\X{\widehat{X}}
\def\S{\widehat{S}}
\def\FF{\widehat{F}}
\title[Singular Stochastic Control for Reflected Diffusions]{On a Class of Singular Stochastic Control Problems for Reflected Diffusions}
\author[Ferrari]{Giorgio Ferrari}
\keywords{}
\address{G.~Ferrari: Center for Mathematical Economics (IMW), Bielefeld University, Universit\"atsstrasse 25, 33615, Bielefeld, Germany}
\email{\href{mailto:giorgio.ferrari@uni-bielefeld.de}{giorgio.ferrari@uni-bielefeld.de}}
\date{\today}
\numberwithin{equation}{section}
\begin{document}

\begin{abstract} 
Reflected diffusions naturally arise in many problems from applications ranging from economics and mathematical biology to queueing theory. In this paper we consider a class of infinite time-horizon singular stochastic control problems for a general one-dimensional diffusion that is reflected at zero. We assume that exerting control leads to a state-dependent instantaneous reward, whereas reflecting the diffusion at zero gives rise to a proportional cost with constant marginal value. The aim is to maximize the total expected reward, minus the total expected cost of reflection. We show that depending on the properties of the state-dependent instantaneous reward we can have qualitatively different kinds of optimal strategies. The techniques employed are those of stochastic control and of the theory of linear diffusions.
\end{abstract}

\maketitle

\smallskip

{\textbf{Keywords}}: reflected one-dimensional diffusions; singular stochastic control; variational inequality; optimal stopping; optimal dividend; optimal harvesting.

\smallskip

{\textbf{MSC2010 subject classification}}: 93E20, 60J60, 60G40, 91B30, 91B76.

\section{Introduction}

Many problems from applications involve reflected diffusions. These arise, e.g., in economics in the study of currency exchange rate target-zone models pioneered in \cite{Krugman} (see also \cite{Jong} and \cite{Larsen}), or in equilibrium models of competitive firms \cite{Alfarano}; in actuarial science in optimal dividend problems with capital injections (\cite{LokkaZervos}, Chapter 2.5 in \cite{Schmidli} and \cite{Yang}); in queueing theory in heavy-traffic approximations (see, e.g., \cite{Harrison} and \cite{LeeWerasinghe}), and in approximations of queueing systems with reneging or balking \cite{Ward}; in mathematical biology in studies on population dynamics \cite{Ricciardi} (see also the introduction of \cite{RicciardiSacerdote} for further references); in game theory in the problem of constructing best-replies in nonzero-sum games of singular controls \cite{DeAFe16}.

In this paper we consider a class of infinite time-horizon singular stochastic control problems in which the state variable $X^D$ is a general one-dimensional diffusion reflected at zero, and linearly controlled through a nondecreasing c\`agl\`ad process $D$; that is (cf.\ \eqref{state:X} below),
$$dX^{D}_t=\mu(X^{D}_t)dt+\sigma(X^{D}_t)dB_t+dL^D_t-dD_t,\qquad X^{D}_{0}=x\geq 0.$$
Here $\mu$ and $\sigma$ are suitable drift and volatility coefficients, whereas $L^D$ is a local-time-like process that makes the process $X^D$ reflected at zero.

For any given $x\geq 0$, the controller aims at choosing a nondecreasing process $D$ (belonging to a suitable class) in order to solve 
\begin{equation}
\label{probl-intro}
\sup_D\E_x\bigg[\int_0^{\infty}e^{-rt} \eta(X^D_t) \circ dD_t - \kappa \int_0^{\infty}e^{-rt}dL^D_t\bigg].
\end{equation}
That is a so-called \emph{reflected follower} problem (see, e.g., \cite{Baldursson}, \cite{ELKK88}, \cite{ELKK}, and \cite{KS85} as classical references).

In \eqref{probl-intro} $\E_x$ denotes the expectation conditioned on the fact that $X^D_0=x$; $r>0$ is the controller's discount factor; $\eta$ is a state-dependent real-valued function giving the marginal reward accrued by following a control policy $D$, and $\kappa\geq \eta(0)$ is the proportional marginal cost of reflecting the process $X^D$ at zero. The integral with respect to the random measure on $[0,\infty)$ induced by the nondecreasing process $D$ is suitably defined in order to take care of the continuous and jump parts of $D$ (see \eqref{defintegral} below, and \cite{LZ11} and \cite{Zhu92}, among others).

Two problems from applications leading to \eqref{probl-intro} are those of optimal dividends' distribution with capital injections (see, e.g., \cite{KulenkoSchmidli}, \cite{LokkaZervos}, and \cite{Yang}), and of optimal harvesting (see, e.g., \cite{AlvarezShepp}, \cite{Alvarez00}, and references therein). 

In the first case, $X^D$ models the surplus process of a company, $D_t$ represents the cumulative amount of dividends paid to the shareholders up to time $t\geq0$, and $L^D_t$ is the cumulative amount of capital that the shareholder have been asked to inject up to time $t\geq0$ in order to avoid bankruptcy of the company. Taking $\eta(x)=\eta_o$ for all $x\geq0$, $\eta_o$ can be interpreted as the net proportion of leakages from the surplus received by the shareholders after surplus-dependent transaction costs/taxes have been paid, whereas $\kappa$ is a proportional administration cost for capital injections. 

On the other hand, \eqref{probl-intro} might also be seen as an optimal harvesting problem, in which the harvester has the obligation of preserving a population that can extinguish in finite time. In this case, $X^D$ is the size of the population, $D_t$ is the cumulative amount of members of the population that have been harvested up to time $t\geq 0$, and $L^D_t$ is the cumulative amount of members of the species that have been reintroduced up to time $t\geq 0$ in order to avoid extinction. The function $\eta$ now represents a state-dependent marginal yield measuring the instantaneous returns accrued from harvesting, while $\kappa$ is a proportional cost for the reintroduction of the species.

In this paper we solve problem \eqref{probl-intro} under different requirements on the function $\eta$. In particular, assuming (among other more technical conditions) that $\eta \in C^2(\mathbb{R}_+)$, $\mu$ and $\sigma$ are continuously differentiable, and that $r>\mu'$, we conduct our analysis in the three cases: 
\begin{itemize}
\item[\textbf{(A)}] 
$$\big(\cL_{\X}-(r-\mu'(x)\big)\eta(x) < 0 \quad \text{for all $x\geq0$}.$$
\vspace{0.05cm}

\item[\textbf{(B)}] 
There exists $\overline{x} > 0$ such that
\begin{align*}
\big(\cL_{\X}-(r-\mu'(x)\big)\eta(x)
\left\{
\begin{array}{l}
> 0, \quad \text{for $0 < x < \overline{x}$,}\\[+6pt]
< 0, \quad \text{for $x > \overline{x}$}.
\end{array}
\right.
\end{align*}
\vspace{0.05cm}

\item[\textbf{(C)}] 
\begin{align*}
\big(\cL_{\X}-(r-\mu'(x)\big)\eta(x) \geq 0, \qquad \mbox{ for all }\,\,x \geq 0.
\end{align*}
\end{itemize}
\noindent Here $\cL_{\X}$ denotes the second-order differential operator such that $(\cL_{\X} f)\,(x):=\frac{1}{2}\sigma^2(x)f''(x)+(\mu(x)+\sigma(x)\sigma'(x))f'(x)$, $f\in C^2(\R)$, $x\in\R$.

We find that the optimal control is qualitatively different across the three cases. In Case (A), if $\kappa > \eta(0)$ it is optimal to reflect the state process at zero and at an endogenously determined level $b^*$ (a so-called free boundary); if $\kappa = \eta(0)$, an $\varepsilon$-optimal strategy prescribes (apart an initial jump) to keep the state process inside the stripe $[0,b^*_{\delta}]$, where, for any arbitrarily small $\delta>0$, $b^*_{\delta}$ is the free boundary associated to the problem in which $\kappa=\eta(0)+\delta$. Informally, it is optimal to keep the state process at $0$ and to exert control whenever the state process attempts to become strictly positive. In Case (B), it is optimal to reflect the state process at zero and at a free boundary $b^* > \overline{x}$, whereas $D^*\equiv 0$ is the optimal control in Case (C). In the last case the optimal value turns out to be negative. 

In our study we combine probabilistic and analytic techniques, and we follow a \emph{guess-and-verify} approach. Starting from an educated guess on the structure of the optimal control, in each case we find a $C^2$-solution to the Hamilton-Jacobi-Bellman equation that the problem's value function should satisfy. The latter takes the form of a variational inequality with a state-dependent gradient constraint, complemented by a Neumann boundary condition at zero. We then provide a verification theorem in order to confirm the optimality of our initial guess. As a byproduct of the explicit nature of our findings, we are able to perform a sensitivity analysis of the free boundary and of the value function in a case study in which $\eta(x)=\eta_o>0$, and $X^D$ evolves as a linearly controlled reflected Ornstein-Uhlenbeck process (see Section \ref{sec:CS}). Such a case study well models an optimal dividends' distribution problem with compulsory capital injections and with a mean-reverting cash reservoir (see the introduction of \cite{Cadenillas} for a discussion on such a dynamics for the cash balance).

The number of papers dealing with singular stochastic control problems for reflected diffusions is still quite limited. Some early papers have investigated questions related to the existence of an optimal control, or to the validity of the dynamic programming principle for general reflected-follower problems (see \cite{Ma93} and \cite{Min}). The relation to optimal stopping problems with absorption at zero has been studied in \cite{Baldursson}, \cite{ELKK} and \cite{KS85}. However, in none of the previous contributions the explicit form of the optimal control/value function is provided.

An explicit construction of the optimal control and of the value function has been obtained in several papers motivated by optimal dividend problems (see \cite{LokkaZervos}, among others) and by queueing models (see, e.g., \cite{KrichaginaTaksar}). In these works the underlying state process typically obeys a specific diffusive dynamics (e.g., it is given by a Brownian motion with drift). In \cite{Shreveetal} and \cite{Yang} the optimal control and the value function have been obtained for singular stochastic problems involving a general one-dimensional reflected diffusion (the problem addressed in \cite{Yang} is motivated by an optimal dividend problem with capital injections). However, in \cite{Shreveetal} and \cite{Yang} the marginal reward accrued from exerting control is constant, while in our paper it is state-dependent. 

Our paper thus contributes to the literature by providing the explicit form of the optimal control and of the value function in a class of infinite time-horizon, singular stochastic control problems in which: (i) the state process is a general one-dimensional reflected diffusion; (ii) the marginal reward accrued by exerting control is state-dependent; (iii) the reflection at zero is costly. To the best of our knowledge, a similar problem has not been yet solved. Moreover, given the wide range of different fields in which reflected diffusions arise, we believe that our theoretical study might be useful when it comes to solve specific problems motivated by some application.

The rest of the paper is organized as follows. In Section \ref{sec:problem} we set up the problem, by introducing the probabilistic setting (Section \ref{sec:setting}) and the control problem that is the object of our study (Section \ref{sec:probl}). The latter is then solved in Section \ref{sec:solving} in each of the three Cases (A), (B) and (C) previously discussed. In Section \ref{sec:CS} we provide a case study with mean-reverting dynamics and constant reward function. Finally, an appendix is devoted to the proof of some auxiliary result.


\section{Problem formulation}
\label{sec:problem}

\subsection{The Probabilistic Setting}
\label{sec:setting}

Denote by $(\Omega, \mathcal{F},\P)$ a complete probability space equipped with a filtration $\F=(\mathcal{F}_t)_{t \geq 0}$ under the usual hypotheses. Let $B=(B_t)_{t\geq 0}$ be a one-dimensional standard Brownian motion adapted to $\F$, and $(X_t)_{t\ge 0}$ satisfying the stochastic differential equation (SDE)  
\begin{equation}
\label{state:X1}
dX_t=\mu(X_t)dt+\sigma(X_t)dB_t,\qquad X_0=x\in \mathbb{R}.
\end{equation}
\begin{assumption}
\label{ass:coef}
The functions $\mu: \R \to \R$ and $\sigma: \R \to (0,\infty)$ belong to $C^1(\mathbb{R})$. Moreover, there exists $L>0$ such that
\begin{equation}
\label{eq:Lip}
|\mu(x) - \mu(y)| + |\sigma(x)-\sigma(y)| \leq L|x-y|, \qquad x,y \in \R. 
\end{equation}
\end{assumption}
\noindent Continuity of $\mu$ and $\sigma$ together with \eqref{eq:Lip} imply that 
\begin{equation}
\label{eq:growth}
|\mu(x)| + |\sigma(x)| \leq M(1 + |x|), \qquad x \in \R,
\end{equation}
for some $M>0$. Hence, under Assumption \ref{ass:coef}, equation \eqref{state:X1} admits a unique strong solution (cf.\ Chapter 5.5 in \cite{KS}, among others). Moreover, for any $x \in \R$ there exists $\varepsilon_o>0$ such that
\begin{equation}
\label{LI-X}
\int_{x-\varepsilon_o}^{x+\varepsilon_o}\frac{1 + |\mu(z)|}{|\sigma(z)|^2}\,dz < +\infty,
\end{equation}
and therefore the diffusion $X$ is regular; that is, starting from $x\in \R$, $X$ reaches any other point $z\in \R$ in finite time with positive probability.

Given a process $D=(D_t)_{t\ge 0}$ belonging to the set
\begin{align}
\label{def:setS}
& \mathcal{S}:=\{D:\Omega \times \R_+ \mapsto \R_+,\,\,\mathbb{F}-\text{adapted, such that a.s.}\,\,t\mapsto D_t\,\,\text{is}  \\
& \hspace{0.3cm} \qquad \text{nondecreasing, left-continuous},\,\,D_{0}=0\},\nonumber
\end{align}
we then consider the SDE with reflecting boundary condition
\begin{equation}
\label{state:X}
dX^{D}_t=\mu(X^{D}_t)dt+\sigma(X^{D}_t)dB_t+dL^D_t-dD_t,\qquad X^{D}_{0}=x\geq 0.
\end{equation}
Here $L^D \in \mathcal{S}$ is a local-time-like process that assures $X^{D}_t \geq 0$ a.s.\ for all $t\geq 0$, that is flat off $\{t: X^{D,L^D}_t=0\}$, and that has a jump whenever $X^D$ attempts a jump across the origin.  

Under Assumption \ref{ass:coef} it is well known that for any given $D\in \mathcal{S}$ and $x\geq0$ there exists a unique pathwise solution to \eqref{state:X} (see, e.g., \cite{Ma93}, Theorem 4.3, and also \cite{ElKarouiChaleyat}, Theorem 15 in Section 3.2, and Section 3.4). To account for the dependence of $X^D$ on its initial position, from now on we shall write $X^{x,D}$, where appropriate. In the rest of the paper we use the notation $\E_x[ f(X^{D}_t)]=\E[f(X^{x,D}_t)]$, for $f$ Borel-measurable and such that the previous expectations are finite. Here $\E_x$ is the expectation under the measure $\P_x(\,\cdot\,):=\P(\,\cdot\,|X^D_{0}=x)$ on $(\Omega,\cF)$. Furthermore, by \cite{Ma93} we know that $(X^{x,D},L^{x,D})$ solves a so-called discontinuous Skorokhod reflection problem at zero, and since $x\geq0$, for any given $D\in \mathcal{S}$, $L^{x,D}$ has continuous sample paths.

We also consider a diffusion $\X=(\widehat{X}_t)_{t\ge 0}$ evolving according to 
\begin{align}
\label{state:XX}
d\X_t=\big(\mu(\X_t)+\sigma(\X_t)\sigma'(\X_t)\big)dt+\sigma(\X_t)d\widehat{B}_t,\quad \X_0=x\in\R.
\end{align}
Assumption \ref{ass:coef} ensures that the above SDE admits a weak solution $(\widehat{\Omega},\widehat{\mathcal{F}},\widehat{\F},\widehat{\P}_x,\widehat{B},\X)$ which is unique in law up to a possible explosion time (see, e.g., Chapter 5.5 in \cite{KS}). Indeed for every $x \in \R$ there exists $\widehat{\varepsilon}_o>0$ such that
\begin{equation}
\label{LI}
\int_{x-\widehat{\varepsilon}_o}^{x+\widehat{\varepsilon}_o}\frac{1 + |\mu(z)|+|\sigma(z)\sigma'(z)|}{|\sigma(z)|^2}\,dz < +\infty.
\end{equation}
We assume that such a weak solution is fixed for each initial condition $x\in \mathbb{R}$ throughout the paper. Because of \eqref{LI}, the linear diffusion $\X$ is regular as well. In the following we write $\X^x$ to denote the solution of \eqref{state:XX} starting from level $x \in \R$ at time zero. Moreover, we denote by $\widehat{\E}_x$ the expectation under the measure $\widehat{\P}_x$.

\begin{remark}
\label{rem:tilde}
Let us consider the dynamics $X$ of \eqref{state:X1} under the measure $\P_x$, and the dynamics $\X$ under the measure $\widehat{\P}_x$. Let us also define a new measure $\mathsf{Q}_x$ through the Radon-Nikodym derivative 
\begin{align*}
Z_t:=\frac{d\mathsf{Q}_x}{d\P_x}\bigg|_{\cF_t}=\exp\Big\{\int^t_0\sigma'(X_s)dB_s-\frac{1}{2}\int^t_0(\sigma')^2(X_s)ds\Big\},\qquad\P_x-\text{a.s.}
\end{align*}
which is an exponential martingale because $\sigma'$ is bounded by \eqref{eq:Lip}. Hence Girsanov theorem implies that the process $\widehat{B}_t:=B_t-\int_0^t\sigma'(X_s)ds$ is a standard Brownian motion under $\mathsf{Q}_x$, and it is not hard to verify that $\text{Law}\,(X\big|\mathsf{Q}_x)=\text{Law}\,(\X\big|\widehat{\P}_x)$.  
\end{remark}

The infinitesimal generator of the uncontrolled diffusion $X$ is denoted by $\cL_{X}$ and is defined as
\begin{align}
\label{eq:LX}
(\cL_{X} f)\,(x):=\frac{1}{2}\sigma^2(x)f''(x)+\mu(x)f'(x),\quad f\in C^2(\R),\, x\in\R,
\end{align}
whereas the one for $\X$ is denoted by $\cL_{\X}$ and is defined as
\begin{align}
\label{eq:LXhat}
(\cL_{\X} f)\,(x):=\frac{1}{2}\sigma^2(x)f''(x)+(\mu(x)+\sigma(x)\sigma'(x))f'(x),\quad f\in C^2(\R), x\in\R.
\end{align}
Letting $r>0$ be a fixed constant, we make the following standing assumption.

\begin{assumption}
\label{ass:rate}
$\inf_{x\in \mathbb{R}}\big(r-\mu'(x)\big) >0$.
\end{assumption}
\noindent The parameter $r>0$ will be the discount factor in the optimal control problem thas is the object of our study. 

Then we introduce $\psi$ and $\phi$ as the fundamental solutions of the ordinary differential equation (ODE)
\begin{align}
\label{ODE}
\cL_X u(x)-ru(x)=0,\qquad x\in\R,
\end{align}
and we recall that they are strictly increasing and decreasing, respectively (see Ch.~2, Sec.~10 of \cite{BS}). Also, for some arbitrary $x_o \in \mathbb{R}$, we denote by 
$$S'(x):= \exp\Big\{-2\int_{x_o}^x \frac{\mu(y)}{\sigma^2(y)}dy\Big\}, \quad x\in\R,$$ 
the density of the scale function of $(X_t)_{t\ge 0}$, and by $W$ the positive constant Wronskian
\begin{equation}
\label{WronskianX}
W:= \frac{{\psi}'(x){\phi}(x) - {\phi}'(x){\psi}(x)}{S'(x)}, \quad x \in \R.
\end{equation}

Moreover, under Assumptions \ref{ass:coef} and \ref{ass:rate}, any solution of the ODE
\begin{align}
\label{ODE2}
\cL_{\X} u(x)-(r-\mu'(x))u(x)=0,\qquad x\in\R,
\end{align}
can be written as a linear combination of the fundamental solutions $\widehat{\psi}$ and $\widehat{\phi}$. These two functions are strictly increasing and strictly decreasing, respectively. 

Finally, for an arbitrary $x_o\in \mathbb{R}$, we denote by 
$$\S'(x):= \exp\Big\{-2\int_{x_o}^x \frac{\mu(y) + \sigma(y)\sigma'(y)}{\sigma^2(y)}dy\Big\}, \quad x\in\R,$$
the derivative of the scale function of $(\X_t)_{t\ge 0}$, by 
\begin{equation}
\label{hatm}
\widehat{m}'(x):= \frac{2}{\sigma^2(x)\S'(x)}, \quad x \in \mathbb{R},
\end{equation}
the density of the speed measure of $(\X_t)_{t\ge 0}$, and by $w$ the Wronskian
\begin{equation}
\label{Wronskian}
w:= \frac{\widehat{\psi}'(x)\widehat{\phi}(x) - \widehat{\phi}'(x)\widehat{\psi}(x)}{\S'(x)}, \quad x \in \R,
\end{equation}
which is again a positive constant.

\begin{remark}
It is easy to see that the scale functions and of the speed measures of the two diffusions $X$ and $\X$ are related through $\S'(x)=S(x)/\sigma^2(x)$ and $\widehat{m}'(x)=2/S'(x)$ for $x \in \R$.
\end{remark}

Concerning the boundary behavior of the real-valued It\^o-diffusions $X$ and $\X$, in the rest of this paper we assume that $+\infty$ is natural, whereas $-\infty$ is either natural or entrance-not-exit, hence unattainable (see Section 2 in \cite{BS} for a complete discussion of the boundary behavior of linear diffusions). 

%

The next standing assumption will also hold true throughout the rest of this paper.
\begin{assumption}
\label{ass:psiprimephiprime}
One has $\lim_{x\downarrow -\infty}\phi'(x)=-\infty$ and $\lim_{x\uparrow +\infty}\psi'(x)=\infty$.
\end{assumption}

\noindent Under Assumption \ref{ass:psiprimephiprime} we show in Lemma \ref{lem:AM} in the Appendix that one has $\widehat{\phi}=-\phi'$ and $\widehat{\psi}=\psi'$ (see also the second part of the proof of Lemma 4.3 in \cite{AlvarezMatomaki}).

\begin{remark}
\label{diffusions}
\begin{enumerate}\hspace{10cm}
\item It is worth noticing that our setting covers important classical dynamics, as the cases of a process $X$ given by a drifted Brownian motion (i.e.\ $\mu(x) = \mu \in \R$ and $\sigma(x)=\sigma>0$), and by a mean-reverting process (i.e.\ $\mu(x) = \mu-\theta x$, for some constants $\theta>0$, $\mu\in \R$, and $\sigma(x)=\sigma>0$). 

\item The results of this paper easily generalize to the case of a state space given by $\mathcal{I}:=(\underline{\ell},\overline{\ell})$, $-\infty \leq \underline{\ell} < \overline{\ell} \leq + \infty$, and with $X^D$ being reflected at a point $x_o \in \mathcal{I}$. However, we decided to stick with $\mathcal{I}=\R$ and $x_o=0$ in order to simplify exposition.
\end{enumerate}
\end{remark}

\subsection{The Singular Control Problem}
\label{sec:probl}

We now introduce the singular stochastic control problem that is the object of our study. 

We suppose that exerting a unit of control $D$ at time $t$ leads to a proportional instantaneous reward $\eta(X^D_t)$. On the other hand, we assume that the reflection of $X^D$ at zero is costly, with marginal constant cost $\kappa>0$. 
The total expected reward, net of the total expected costs of the reflection, is therefore
\begin{equation}
\label{eq:functional}
\mathcal{J}_x(D):=\E_x\bigg[\int_{0}^{\infty} e^{-rt}\eta(X^D_t) \circ dD_t - \kappa \int_{0}^\infty e^{-rt} dL^D_t\bigg],
\end{equation}
where $r>0$ is the controller's discount factor. 

Noticing that that any $D \in \mathcal{S}$ can be expressed as the sum of its continuous part and pure jump part, i.e.
\begin{align}
D_t=D^c_t+\sum_{s< t}\Delta D_s,
\end{align}
where $\Delta D_s:=D_{s+}-D_{s}$, we follow \cite{Zhu92} (see also \cite{LZ11}, among many others) and we define the integral with respect to the control $D$ by
\begin{equation}
\label{defintegral}
\int_{0}^{T} e^{-rt}\eta(X^D_t) \circ dD_t := \int_0^{T} e^{-rt}\eta(X^D_t) dD^c_t + \sum_{t < T} e^{-rt} \int_0^{\Delta D_t} \eta(X^D_{t} - z) dz,
\end{equation}
for all $T>0$ and for any $D \in \mathcal{S}$. On the other hand, the integral with respect to the process $L^D$ is intended in the standard Lebesgue-Stieltjes sense.

\begin{remark}
Notice that the choice of a constant penalty factor $\kappa$ is without loss of generality. Indeed, if $\kappa$ were state-dependent, only its value at $0$, $\kappa(0)$, would have played a role in the functional \eqref{eq:functional} since the support of the random measure on $[0,\infty)$ induced by $L^D$ is given by $\{t\geq 0: X^D_t =0\}$.
\end{remark}

For any given initial value of the state $x\geq 0$, the goal is to maximize $\mathcal{J}_x(D)$ among all the controls $D\in \mathcal{S}$ (cf.\ \eqref{def:setS}) such that  
\begin{equation}
\label{intadmiss}
\E_x\bigg[\int_{0}^{\infty} e^{-rt}|\eta(X^D_t)| \circ dD_t + \kappa \int_{0}^\infty e^{-rt} dL^D_t\bigg] < \infty,
\end{equation}
and
\begin{equation}
\label{intadmiss-2}
D_{t+}-D_{t} \leq X^D_{t}\,\,\text{a.s.\ for all}\,\,t\geq 0.
\end{equation}
Requirement \eqref{intadmiss} guarantees that the functional \eqref{eq:functional} is well defined and finite. The condition $D_{t+}-D_{t} \leq X^D_{t}$ for all $t\geq 0$ rules out the possibility of crossing the origin with a single jump. 

Any control process belonging to $\mathcal{S}$ and satisfying \eqref{intadmiss} and \eqref{intadmiss-2} is called \emph{admissible}. For $x\geq 0$ given and fixed, we denote by $\mathcal{A}(x)$ the family of admissible controls. 

The optimal control problem thus reads 
\begin{equation}
\label{eq:value}
V(x):=\sup_{D \in \mathcal{A}(x)}\mathcal{J}_x(D), \qquad x\geq 0,
\end{equation}
and it takes the form of a singular stochastic control problem for a reflected one-dimensional diffusion; i.e., a so-called reflected follower stochastic control problem. An introduction to singular stochastic control problems can be found in \cite{Shreve}, whereas classical references on reflected follower problems are \cite{Baldursson}, \cite{ELKK88}, and \cite{ELKK}.

\begin{remark}
\label{rem:eta}
\begin{enumerate}\hspace{10cm}
\item Our formulation of the performance criterion allows to accommodate also the case in which the reward functional is 
\begin{equation}
\label{functionalrunning}
\E_x\bigg[\int_{0}^{\infty} e^{-rt}\pi(X^D_t) dt - \alpha\int_0^{\infty}e^{-rt} dD_t\bigg],
\end{equation}
for some real-valued continuous function $\pi$, and some $\alpha>0$. To see this define the function $\Pi$ via the ODE
$$(\cL_{X}-r)\Pi(x)=\pi(x),$$ 
with finiteness condition at zero, $\Pi(0+)<+\infty$, and possibly growth condition at infinity as needed. Then for any $D \in \mathcal{S}$ an application of It\^o-Meyer's formula shows that maximizing \eqref{eq:functional} over all the admissible $D$ is equivalent to the maximization of \eqref{functionalrunning} if we set
$$\eta(x):=\Pi'(x) - \alpha, \qquad \kappa:=\Pi'(0).$$ 

\item As discussed in the introduction, problems of optimal harvesting or of optimal dividends' distribution with (compulsory) capital injections might be modeled as \eqref{eq:value}.

More in detail, problem \eqref{eq:value} might be interpreted as an optimal harvesting problem (see, e.g., \cite{AlvarezShepp} and \cite{Alvarez00}), in which the harvester has the obligation of preserving a population that can extinguish in finite time. In this case, $X^D$ is the (logarithm of the) size of the population, $D_t$ is the cumulative amount of members of the population that have been harvested up to time $t\geq 0$, and $L^D_t$ the cumulative amount of members of the species that have been reintroduced up to time $t\geq 0$ in order to avoid extinction. The function $\eta$ represents a state-dependent marginal yield measuring the instantaneous returns accrued from harvesting, while $\kappa$ is a proportional cost for the reintroduction of the species.

In problems of optimal dividends' distribution with (compulsory) capital injections (see, e.g., \cite{KulenkoSchmidli}, \cite{LokkaZervos}, and \cite{Yang}), $D_t$ is the cumulative amount of dividends paid up to time $t\geq0$, $L^D_t$ is the cumulative amount of capital injected up to time $t\geq0$, and $X^D$ is the surplus process. Taking $\eta(x)=\eta_o$ for all $x\geq0$, $\eta_o$ can be interpreted as the net proportion of leakages from the surplus received by the shareholders after surplus-dependent transaction costs/taxes have been paid. On the other hand, $\kappa$ is a proportional administration cost for capital injections.
\end{enumerate}
\end{remark}

The parameter $\kappa$ and the function $\eta:\R_+ \to \R$ satisfy the following assumption which will hold throughout the rest of the paper.
\begin{assumption}
\label{ass:costs}
\begin{itemize}
\item[(i)] The function $\eta$ belongs to $C^2(\R_+)$ and it is such that 
$$\lim_{x \uparrow \infty} \frac{\eta(x)}{\widehat{\psi}(x)}=0,$$
and
$$\E_x\bigg[\int_0^{\infty} e^{-\int_0^t (r - \mu'(\X_s))ds}\, \big|\big(\cL_{\X} -(r-\mu'(\X_s))\big)\eta(\X_s)\big|\,ds\bigg] < \infty.$$
\item[(ii)] One has $\kappa \geq \eta(0)$.
\end{itemize}
\end{assumption}

\begin{remark}
Notice that if $\kappa < \eta(0)$ the value function might be infinite, as it is shown in the following example.

Assume that $X^D_t=x + \mu t + \sigma B_t - D_t + L^D_t$, $x\geq 0$, and with regard to \eqref{eq:functional} take $r=1$, $\eta(x)=\eta_o$ for all $x\geq0$, and $\kappa<\eta_o$. Then, for arbitrary $\beta>0$ consider the admissible control $\widehat{D}_t:=\beta t$, and notice that for such a choice one has $L^{\widehat{D}}_t=\sup_{0\leq u \leq t}(-x - \mu u - \sigma B_u + \beta u) \vee 0$, $L^{\widehat{D}}_0=0$. Since $L^{\widehat{D}}_t \leq \beta t + R_t$, where $R_t := \sup_{0\leq u \leq t}(-x - \mu u - \sigma B_u) \vee 0$, the sub-optimality of $\widehat{D}$ and an integration by parts in the integral with respect to $dL^{\widehat{D}}$ yield for any $x\geq 0$
\begin{eqnarray*}
& \displaystyle V(x) \geq \beta \eta_o\int_0^{\infty} e^{-t}\, dt - \beta \kappa \int_0^{\infty} e^{-t}\,t dt - \kappa\,\E_x\bigg[\int_0^{\infty}e^{-t}\,R_t dt\bigg]\nonumber \\
& \displaystyle = \beta(\eta_o - \kappa) - \kappa\,\E_x\bigg[\int_0^{\infty}e^{-t}\,R_t dt\bigg], \nonumber
\end{eqnarray*} 
and the latter expression diverges if $\beta \uparrow \infty$.

In order to avoid similar pathological situations we have imposed Assumption \ref{ass:costs}-(ii).
\end{remark}

For any $\widehat{\mathbb{F}}$-stopping time $\tau$, in the following we will make use of the convention
$$e^{-\int_0^{\tau} (r - \mu'(\X_s))ds} := \limsup_{t \uparrow \infty} e^{-\int_0^{t} (r - \mu'(\X_s))ds} = 0\quad \text{on} \quad \{\tau=+\infty\},$$
where the last equality is due to Assumption \ref{ass:rate}. Also, for future reference we notice that, setting $\tau_0:=\inf\{t \geq 0: \X_t \leq 0\}$ $\widehat{\P}_x$-a.s., under Assumption \ref{ass:costs}-(i), an application of Dynkin's formula (up to a standard localization argument) to the process $(e^{-\int_0^{t\wedge \tau_0} (r - \mu'(\X_s))ds}\eta(\X_{t \wedge \tau_0}))_{t\geq 0}$ yields for any $x \geq 0$
\begin{equation}
\label{repr-eta}
\eta(x) = \widehat{\E}_x\bigg[\eta(0) e^{-\int_0^{\tau_0} (r - \mu'(\X_s))ds} - \int_0^{\tau_0} e^{-\int_0^t (r - \mu'(\X_s))ds}\, \big(\cL_{\X} -(r-\mu'(\X_s))\big)\eta(\X_s)\, ds\bigg].
\end{equation}


\section{Solving the Problem}
\label{sec:solving}

According to the classical theory of singular stochastic control (see \cite{FlemingSoner}, Chapter VIII), we expect that $V$ identifies with a suitable solution to the Hamilton-Jacobi-Bellman (HJB) equation 
\begin{equation}
\label{HJB}
\max\big\{(\cL_X-r)v(x), \eta(x) - v'(x)\big\} =0, \qquad x \in [0,\infty),
\end{equation}
subject to the Neumann boundary condition $v'(0)=\kappa$. The latter is due to the fact that the underlying state process is elastically reflected at zero.

In this section we solve problem \eqref{eq:value} by finding a classical solution to \eqref{HJB} and then verifying its optimality through a verification theorem. This in turn will give us also the optimal control. Our analysis will be developed in each of the following three cases.

\begin{itemize}
\item[\textbf{(A)}] 
$$\big(\cL_{\X}-(r-\mu'(x)\big)\eta(x) < 0 \quad \text{for all $x\geq0$}.$$
\vspace{0.12cm}

\item[\textbf{(B)}] 
There exists $\overline{x} > 0$ such that
\begin{align*}
\big(\cL_{\X}-(r-\mu'(x)\big)\eta(x)
\left\{
\begin{array}{l}
> 0, \quad \text{for $0 < x < \overline{x}$,}\\[+6pt]
< 0, \quad \text{for $x > \overline{x}$}.
\end{array}
\right.
\end{align*}
\vspace{0.12cm}

\item[\textbf{(C)}] 
\begin{align*}
\big(\cL_{\X}-(r-\mu'(x)\big)\eta(x) \geq 0, \qquad \mbox{ for all }\,\,x \geq 0
\end{align*}
\end{itemize}

\begin{remark}
\label{remark:examples}
Notice that under Assumption \ref{ass:rate}, the benchmark case $\eta(x)=\eta_o>0$ clearly fulfills the condition of Case (A), independently of the dynamics $\X$ that we choose. The requirement of Case (B) is commonly assumed in the singular stochastic control literature (see, e.g., Theorem 2 in \cite{Alvarez00} and Assumption 5 in \cite{Jacketal} for similar conditions) in order to ensure that the optimal control is triggered by an optimal reflection boundary. It is easy to see that $\eta(x)=x$ satisfies the condition of Case (B) for $d\X_t = \mu dt + \sigma d\widehat{B}_t$ or $d\X_t = \theta(\mu -\X_t)dt + \sigma d\widehat{B}_t$, killed at rate $r$ and $r+\theta$, respectively. On the other hand, $\eta(x)=e^{-\lambda x}$, $\lambda>0$, and $d\X_t = \mu dt + \sigma d\widehat{B}_t$ satisfy the requirement of Case (A) if $\frac{1}{2}\sigma^2\lambda^2 -\lambda \mu-r<0$, and the condition of Case (C) if $\frac{1}{2}\sigma^2\lambda^2 -\lambda \mu-r>0$.
\end{remark}

In the next sections we will solve problem \eqref{eq:value} as follows:
\begin{itemize}
\item[(i)] in Cases (A) and (B), under the condition that $\kappa>\eta(0)$ (see Section \ref{sec:caseA1});
\item[(ii)] in Cases (A) and (B), under the condition that $\kappa=\eta(0)$ (see Section \ref{sec:caseA2});
\item[(iii)] in Case (C) (see Section \ref{sec:caseC}).
\end{itemize}
We will see that the structure of the optimal control will be qualitatively different across the different problem's configurations. 


\subsection{Cases (A) and (B) under the requirement $\kappa>\eta(0)$.}
\label{sec:caseA1}

We guess that in these cases it is optimal to be inactive whenever the state process lies in the interval $[0,b)$, for some $b>0$ to be found, and let the state jump to $b>0$, whenever its current level $x$ is such that $x>b$. By \eqref{defintegral} this leads us to conjecture that $(\cL_X - r)v(x) = 0$ on $[0,b)$ and $v(x)-v(b) = \int_b^x \eta(y)dy$ for any $x> b$. Then supposing that the value function is two-times continuously differentiable at the critical level $b$, the previous conjecture on the optimal policy leads us to reformulate the HJB equation \eqref{HJB} as the free-boundary problem:
\begin{align}
\label{FBP}
\left\{
\begin{array}{l}
(\cL_X - r)v(x) = 0,\quad 0\leq x<b,\\[+6pt]
(\cL_X - r)v(x) \leq  0,\quad x\geq 0,\\[+6pt]
v'(x) = \eta(x), \quad x \geq b,\\[+6pt]
v'(x) \geq  \eta(x), \quad x \geq 0,\\[+6pt]
v'(b) = \eta(b),\\[+6pt]
v''(b) = \eta'(b), \\[+6pt]
v'(0)=\kappa,
\end{array}
\right.
\end{align}
for the two unknowns $v$ and $b$. The fifth and the sixth requirements in \eqref{FBP} are the so-called \emph{smooth-fit} conditions, typical optimality conditions in singular stochastic control problems (see \cite{FlemingSoner}, Chapter VIII).

The general solution to the first equation of \eqref{FBP} is given by
\begin{equation}
v(x) = \alpha \psi(x) + \beta \phi(x), \qquad x \in [0,b),
\end{equation}
where $\psi$ and $\phi$ have been introduced in Section \ref{sec:setting}. Imposing the smooth-fit conditions at $b$ (i.e.\ $v'(b) = \eta(b)$ and $v''(b) = \eta'(b)$) we find after some simple algebra that
\begin{equation}
\label{AB}
\alpha =\frac{\eta(b)\phi''(b) - \phi'(b)\eta'(b)}{\phi''(b)\psi'(b) - \psi''(b)\phi'(b)}, \qquad \beta =\frac{\eta'(b)\psi'(b) - \psi''(b)\eta(b)}{\phi''(b)\psi'(b) - \psi''(b)\phi'(b)}.
\end{equation}
Because under Assumption \ref{ass:psiprimephiprime} we have $\psi'=\widehat{\psi}$ and $\phi'=-\widehat{\phi}$ (see Lemma \ref{lem:AM}) we can rewrite \eqref{AB} as
\begin{equation}
\label{AB2}
\alpha =\frac{\widehat{\phi}(b)\eta'(b) - \eta(b)\widehat{\phi}'(b)}{w\widehat{S}'(b)}, \qquad \beta =\frac{\eta'(b)\widehat{\psi}(b) - \widehat{\psi}'(b)\eta(b)}{w\widehat{S}'(b)},
\end{equation}
upon recalling \eqref{Wronskian}. By setting for any $f\in C^1(\R)$
\begin{equation}
\label{If}
I_f(x):=\frac{f(x)}{\S'(x)}\eta'(x) - \frac{f'(x)}{\S'(x)}\eta(x), \quad x \geq 0,
\end{equation}
we notice that $\alpha =w^{-1}I_{\widehat{\phi}}(b)$ and $\beta =w^{-1}I_{\widehat{\psi}}(b)$, and by imposing the Neumann boundary condition at zero (i.e.\ $v'(0)=\kappa$) we find the equation for $b$
\begin{equation}
\label{eqb}
\kappa = \frac{1}{w}\Big[\widehat{\psi}(0)I_{\widehat{\phi}}(b) - \widehat{\phi}(0)I_{\widehat{\psi}}(b)\Big]. 
\end{equation}

We now rewrite \eqref{eqb} in an equivalent, but more useful, form. We introduce
$$F(x):=\frac{1}{w}\Big[\widehat{\psi}(0)I_{\widehat{\phi}}(x) - \widehat{\phi}(0)I_{\widehat{\psi}}(x)\Big], \qquad x\geq 0,$$ 
and using \eqref{Wronskian} and \eqref{If} it is easily verified that $F(0)=\eta(0)$, so that \eqref{eqb} can be rewritten as
\begin{equation}
\label{eqb2}
\kappa = \eta(0) + \int_0^b F'(y) dy. 
\end{equation}
Here $F'$ is well defined since $\eta \in C^2(\R_+)$ by assumption, as well as $\widehat{\psi}, \widehat{\phi} \in C^2(\R)$.

Notice now that standard differentiation, and the fact that $\cL_{\X}\S=0$ and $(\cL_{\X} - (r-\mu'))g =0$, for $g=\widehat{\psi},\widehat{\phi}$, give 
\begin{equation}
\label{derivIf}
I'_{\widehat{\psi}}(x) = \widehat{m}'(x)\widehat{\psi}(x)(\cL_{\X} - (r-\mu'(x)))\eta(x), \quad I'_{\widehat{\phi}}(x) = \widehat{m}'(x)\widehat{\phi}(x)(\cL_{\X} - (r-\mu'(x)))\eta(x).
\end{equation}
Hence, one has from \eqref{eqb2}
\begin{equation}
\label{eqb3}
\kappa = \eta(0) + \frac{1}{w}\widehat{\phi}(0)\widehat{\psi}(0)\int_0^b \widehat{m}'(y)h(y)\big(\cL_{\X} - (r-\mu'(y))\big)\eta(y) dy,
\end{equation}
where we have set $h(x):=\frac{\widehat{\phi}(x)}{\widehat{\phi}(0)}-\frac{\widehat{\psi}(x)}{\widehat{\psi}(0)}$. Notice that $h(x)<0$ for all $x>0$ since $\widehat{\phi}$ is strictly decreasing and $\widehat{\psi}$ is strictly increasing.

\begin{proposition}
\label{prop:b}
Suppose that Case (A) holds true. Then there exists a unique $b^*>0$ solving \eqref{eqb3}. On the other hand, if Case (B) holds true then there exists a unique $b^*>\overline{x}>0$ solving \eqref{eqb3}.
\end{proposition}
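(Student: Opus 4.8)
The plan is to study the right-hand side of \eqref{eqb3} as a function of the upper integration limit $b$; this function is precisely the $F$ introduced before \eqref{eqb2}, so that \eqref{eqb2}--\eqref{eqb3} read $F(b)=\kappa$, and the Proposition amounts to saying that $F(b)=\kappa$ has a unique solution in $(0,\infty)$ in Case (A), and a unique solution, which moreover exceeds $\overline{x}$, in Case (B). First I record regularity and monotonicity. Setting $\Lambda(x):=\big(\cL_{\X}-(r-\mu'(x))\big)\eta(x)$, the integrand $y\mapsto\widehat{m}'(y)h(y)\Lambda(y)$ in \eqref{eqb3} is continuous (since $\eta\in C^2(\R_+)$, $\mu,\sigma\in C^1(\R)$, $\widehat{\psi},\widehat{\phi}\in C^2(\R)$, $\widehat{m}'\in C^1(\R)$); hence $F\in C^1([0,\infty))$, $F(0)=\eta(0)$ (as already observed), and differentiating the right-hand side of \eqref{eqb3} gives
\[
F'(b)=\frac{1}{w}\,\widehat{\phi}(0)\,\widehat{\psi}(0)\,\widehat{m}'(b)\,h(b)\,\Lambda(b),\qquad b\ge0 .
\]
Since $w>0$, $\widehat{\psi}(0),\widehat{\phi}(0)>0$, $\widehat{m}'>0$ by \eqref{hatm}, and $h(b)<0$ for all $b>0$, the sign of $F'(b)$ is opposite to that of $\Lambda(b)$ on $(0,\infty)$.

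Exploiting this sign rule: in Case (A), $\Lambda<0$ on $[0,\infty)$ gives $F'>0$ on $(0,\infty)$, so $F$ is continuous and strictly increasing on $(0,\infty)$, with $F(0)=\eta(0)<\kappa$ (using the standing hypothesis $\kappa>\eta(0)$ of this subsection). In Case (B), $F'<0$ on $(0,\overline{x})$ and $F'>0$ on $(\overline{x},\infty)$, so $F$ is strictly decreasing on $(0,\overline{x})$ and strictly increasing on $(\overline{x},\infty)$; consequently $F(b)<F(0)=\eta(0)<\kappa$ for every $b\in(0,\overline{x}]$, so no solution of $F(b)=\kappa$ lies in $(0,\overline{x}]$, while on $(\overline{x},\infty)$, $F$ is continuous, strictly increasing, and starts below $\kappa$. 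In both cases the strict monotonicity on the relevant interval yields at most one solution, and the existence of exactly one then follows from the intermediate value theorem once one establishes $\lim_{b\uparrow\infty}F(b)=+\infty$.

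Proving $\lim_{b\uparrow\infty}F(b)=+\infty$ is the heart of the matter and the step I expect to be the main obstacle. The route I would take is as follows. Using the representation \eqref{repr-eta} together with the resolvent of $\X$ killed at rate $r-\mu'$ and absorbed at $0$ --- whose Green kernel with respect to the speed measure $\widehat{m}'(y)\,dy$ equals $w^{-1}\widehat{\psi}_0(x\wedge y)\widehat{\phi}(x\vee y)$, where $\widehat{\psi}_0:=\widehat{\psi}-\tfrac{\widehat{\psi}(0)}{\widehat{\phi}(0)}\widehat{\phi}$ is the increasing solution of \eqref{ODE2} vanishing at $0$ (so that $\widehat{\psi}(0)\,h=-\widehat{\psi}_0$) --- one derives, after some algebra, the identity
\[
F(x)=\frac{\widehat{\phi}(0)}{\widehat{\phi}(x)}\left[\eta(x)+\frac{\widehat{\psi}_0(x)}{w}\int_x^{\infty}\widehat{\phi}(y)\,\Lambda(y)\,\widehat{m}'(y)\,dy\right],\qquad x\ge0 ,
\]
the integral being absolutely convergent because Assumption \ref{ass:costs}-(i), via the Green-kernel representation of the killed resolvent of $\X$ evaluated at $0$, forces $\int_0^{\infty}\widehat{m}'(y)\widehat{\phi}(y)|\Lambda(y)|\,dy<\infty$. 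Equivalently, writing $F(b)-\eta(0)=\tfrac{\widehat{\psi}(0)}{w}\int_0^b\widehat{m}'\widehat{\phi}\Lambda-\tfrac{\widehat{\phi}(0)}{w}\int_0^b\widehat{m}'\widehat{\psi}\Lambda$ and noting that the first integral has a finite limit, the claim $\lim_{b\uparrow\infty}F(b)=+\infty$ reduces to $\int_0^{\infty}\widehat{m}'(y)\widehat{\psi}(y)|\Lambda(y)|\,dy=+\infty$ (in both Cases (A) and (B) one has $\Lambda<0$ for all large $y$, and the integral over the compact set where this may fail is finite). To close this I would invoke the boundary classification of $\X$: since $+\infty$ is natural, $\widehat{\phi}(x)\downarrow0$ and $\widehat{\psi}$ is not integrable against the speed measure near $+\infty$; combining this with Assumption \ref{ass:rate} and with the normalisation $\lim_{x\to\infty}\eta(x)/\widehat{\psi}(x)=0$ of Assumption \ref{ass:costs}-(i), and using the displayed identity for $F$, one obtains $F(x)\to+\infty$, whence the desired existence and uniqueness. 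The delicate point, where the bulk of the estimates lie, is precisely to control the interplay between the non-integrability of $\widehat{\psi}$ at $+\infty$ and a possibly fast-decaying $|\Lambda|$.
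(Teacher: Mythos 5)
Your monotonicity analysis of $F$ is correct and is, up to notation, the same as the paper's: the paper writes $\Phi(b):=F(b)-\kappa$, computes $\Phi'$, uses $h<0$ together with the sign of $\big(\cL_{\X}-(r-\mu')\big)\eta$ to get the right monotonicity on $(0,\infty)$ (Case (A)) resp.\ $(\overline{x},\infty)$ (Case (B)), and observes $\Phi(0)<0$ and (in Case (B)) $\Phi(\overline{x})<0$. Your reduction of existence to $\lim_{b\uparrow\infty}F(b)=+\infty$ is also exactly the paper's \eqref{limitPhib}, and your auxiliary identities (e.g.\ $\widehat{\psi}(0)\,h=-\widehat{\psi}_0$, the Green-kernel formula $F(x)=\eta(0)-\tfrac{\widehat{\phi}(0)}{w}\int_0^x\widehat{\psi}_0\Lambda\,\widehat{m}'$, and its rearrangement displayed in your proposal) are correct, as is the observation that Assumption~\ref{ass:costs}-(i) forces $\int_0^\infty\widehat{m}'\,\widehat{\phi}\,|\Lambda|\,dy<\infty$ and hence reduces the limit claim to $\int_0^\infty\widehat{m}'\,\widehat{\psi}\,|\Lambda|\,dy=+\infty$.

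The genuine gap is precisely there, and you flag it yourself: you do not actually prove $\int_0^\infty\widehat{m}'\,\widehat{\psi}\,|\Lambda|\,dy=+\infty$. The three ingredients you propose to combine do not close it. The natural-boundary fact $\int^\infty\widehat{\psi}\,\widehat{m}'=+\infty$ says nothing if $|\Lambda|$ decays fast; Assumption~\ref{ass:rate} only gives the two-sided bound $r_o\le r-\mu'\le r+L$; and the normalisation $\eta/\widehat{\psi}\to0$ is perfectly compatible with $\int^\infty\widehat{m}'\,\widehat{\psi}\,|\Lambda|<\infty$. Indeed, dividing your displayed identity by $\widehat{\phi}(0)/\widehat{\phi}(x)$ and then by $\widehat{\psi}_0(x)$, finiteness of $\int_0^\infty\widehat{m}'\widehat{\psi}_0|\Lambda|$ forces the bracket $\eta(x)+\tfrac{\widehat{\psi}_0(x)}{w}\int_x^\infty\widehat{\phi}\Lambda\widehat{m}'$ to satisfy $\tfrac{\eta(x)}{\widehat{\psi}_0(x)}+\tfrac{1}{w}\int_x^\infty\widehat{\phi}\Lambda\widehat{m}'\to 0$, and both summands separately tend to $0$ under your hypotheses, so no contradiction emerges. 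So the phrase ``one obtains $F(x)\to+\infty$'' is asserted, not derived, and the ``delicate point'' you name is exactly the missing step. By contrast, the paper proves \eqref{limitPhib} by an entirely different device: the integral mean value theorem applied to $\int_{\overline{x}}^b\widehat{m}'\,h\,\Lambda\,dy$ to extract a factor $\Lambda(\xi)<0$, followed by the sandwich $r_o\le r-\mu'\le r+L$, the identities $\tfrac{\widehat{\psi}'(\beta)}{\S'(\beta)}-\tfrac{\widehat{\psi}'(\alpha)}{\S'(\alpha)}=\int_\alpha^\beta(r-\mu')\widehat{\psi}\widehat{m}'$ (and the analogue for $\widehat{\phi}$), and the natural-boundary limits $\widehat{\phi}'/\S'\to0$, $\widehat{\psi}'/\S'\to\infty$. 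If you want a complete proof along your lines, you must actually establish the divergence of $\int^\infty\widehat{m}'\widehat{\psi}|\Lambda|$ (or replace this step by the paper's mean-value argument); as written, the proposal leaves the central estimate open.
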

\begin{proof}
Setting $$\Phi(b):= \eta(0)-\kappa + \frac{1}{w}\widehat{\phi}(0)\widehat{\psi}(0)\int_0^b \widehat{m}'(y)h(y)\big(\cL_{\X} - (r-\mu'(y))\big)\eta(y) dy,$$
it is clear that \eqref{eqb3} is equivalent to $\Phi(b)=0$.

First of all we notice that $\Phi(0) = \eta(0)-\kappa <0$. Moreover, if Case (B) holds true then $\Phi(\overline{x}) < 0$ because $(\cL_{\X} - (r-\mu'(y)))\eta(y) > 0$ for any $0<y < \overline{x}$, $h(y) < 0$ for all $y>0$, and $\eta(0)-\kappa <0$.

Also, we have
$$\Phi'(b) = \frac{1}{w} \widehat{\phi}(0)\widehat{\psi}(0)\widehat{m}'(b)h(b)\big(\cL_{\X} - (r-\mu'(b))\big)\eta(b),$$
and the following holds: if Case (A) applies then $\Phi'(b) >0$ for all $b>0$ because $(\cL_{\X} - (r-\mu'(b)))\eta(b) <0$ for any $b> 0$ and $h(b) < 0$ for all $b>0$. On the other hand, if Case (B) holds true then $\Phi'(b) >0$ for all $b> \overline{x}$ because $(\cL_{\X} - (r-\mu'(b)))\eta(b) <0$ for any $b> \overline{x}$ and $h(b) < 0$ for all $b>0$.

We now show that 
\begin{equation}
\label{limitPhib}
\lim_{b\uparrow \infty}\Phi(b)=\infty.
\end{equation}
This allows us to conclude that under Case (A) there exists a unique $b^* > 0$ solving $\Phi(b)=0$, whereas under Case (B) there exists a unique $b^* > \overline{x}$ solving $\Phi(b)=0$.

We prove \eqref{limitPhib} under the assumption that Case (B) holds true. Completely analogous arguments can be used to show \eqref{limitPhib} under the condition of Case (A). 
We take an arbitrary $b > \overline{x}$, and we notice that an application of the integral mean value theorem yields for some $\xi \in (\overline{x}, b)$
\begin{align}
& \Phi(b) = \Phi(\overline{x}) + \frac{1}{w}\widehat{\phi}(0)\widehat{\psi}(0)\big(\cL_{\X} - (r-\mu'(\xi))\big)\eta(\xi) \times \nonumber \\
& \times \bigg[\frac{1}{\widehat{\phi}(0)}\int_{\overline{x}}^b \widehat{m}(y)\widehat{\phi}(y)\frac{r-\mu'(y)}{r-\mu'(y)}dy - \frac{1}{\widehat{\psi}(0)}\int_{\overline{x}}^b \widehat{m}(y)\widehat{\psi}(y)\frac{r-\mu'(y)}{r-\mu'(y)}dy  \bigg] \nonumber \\
& \geq \Phi(\overline{x}) + \frac{1}{w}\widehat{\phi}(0)\widehat{\psi}(0)\big(\cL_{\X} - (r-\mu'(\xi))\big)\eta(\xi) \times \\
& \times \left[\frac{1}{r_o\widehat{\phi}(0)}\left(\frac{\widehat{\phi}'(b)}{\S'(b)} - \frac{\widehat{\phi}'(\overline{x})}{\S'(\overline{x})}\right) -
 \frac{1}{(r+L)\widehat{\psi}(0)}\left(\frac{\widehat{\psi}'(b)}{\S'(b)} - \frac{\widehat{\psi}'(\overline{x})}{\S'(\overline{x})}\right)\right]. \nonumber
\end{align}
In the second step above we have used that (cf.\ par.~9 and 10, Ch.~2 of \cite{BS})
\begin{equation*}
\frac{\widehat{\psi}'(\beta)}{\S'(\beta)} - \frac{\widehat{\psi}'(\alpha)}{\S'(\alpha)}= \int_{\alpha}^{\beta}(r-\mu'(y))\widehat{\psi}(y)\widehat{m}'(y) dy, \qquad \frac{\widehat{\phi}'(\beta)}{\S'(\beta)} - \frac{\widehat{\phi}'(\alpha)}{\S'(\alpha)}= \int_{\alpha}^{\beta}(r-\mu'(y))\widehat{\phi}(y)\widehat{m}'(y) dy,
\end{equation*}
for any $-\infty<\alpha<\beta<\infty$, as well as that $(\cL_{\X} - (r-\mu'(\xi))\big)\eta(\xi) <0$, and $\mu'(y) \geq -L$ for all $y\in \R$ (cf.\ \eqref{eq:Lip}). Moreover, we have set $r_o:=\inf_{x}(r-\mu'(x))>0$ by Assumption \ref{ass:rate}.

By recalling that $+\infty$ is natural for $\X$, and therefore that 
$$\lim_{b\uparrow \infty}\frac{\widehat{\phi}'(b)}{\S'(b)} = 0 \quad \text{and} \quad \lim_{b\uparrow \infty}\frac{\widehat{\psi}'(b)}{\S'(b)} = \infty,$$ 
and by using again that $(\cL_{\X} - (r-\mu'(\xi)))\eta(\xi) < 0$ since $\xi \in (\overline{x}, b)$, we can let $b \uparrow \infty$ to conclude that \eqref{limitPhib} holds true. The proof is then completed.
\end{proof}

Given $b^*$ as in Proposition \ref{prop:b}, for any $x\geq 0$ we then set 
\begin{align}
\label{candidate}
v(x)=
\left\{
\begin{array}{l}
\displaystyle \alpha \psi(x) + \beta \phi(x),\quad 0 \leq x \leq b^*,\\[+6pt]
\displaystyle \int_{b^*}^x \eta(y) dy + v(b^*), \quad x \geq b^*
\end{array}
\right.
\end{align}
with $\alpha$ and $\beta$ given by \eqref{AB} (equivalently, by \eqref{AB2}), and, by continuity, $v(b^*)=\alpha\psi(b^*) + \beta\phi(b^*)$. The following proposition shows that $v$ actually solves \eqref{HJB} with the boundary condition $v'(0)=\kappa$.
\begin{proposition}
\label{prop:HJB}
The function $v$ of \eqref{candidate} is a classical solution to the HJB equation \eqref{HJB} with the boundary condition $v'(0)=\kappa$.
\end{proposition}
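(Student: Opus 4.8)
The plan is to verify the three defining properties of a classical solution: that $v\in C^{2}([0,\infty))$, that $v'(0)=\kappa$, and that the pointwise identity in \eqref{HJB} holds; for the last one it suffices to show $(\cL_{X}-r)v=0$ and $v'\ge\eta$ on $[0,b^{*}]$, while $v'=\eta$ and $(\cL_{X}-r)v\le 0$ on $[b^{*},\infty)$. Regularity and the boundary condition are quick: on $[0,b^{*})$, $v$ is a linear combination of $\psi,\phi\in C^{2}(\R)$, so $(\cL_{X}-r)v=0$ there by construction; on $(b^{*},\infty)$, $v'=\eta\in C^{2}(\R_{+})$, hence $v\in C^{3}$; at $b^{*}$, the matching of $v,v',v''$ is exactly the definition $v(b^{*})=\alpha\psi(b^{*})+\beta\phi(b^{*})$ together with the two smooth-fit conditions, which hold because $\alpha,\beta$ of \eqref{AB} solve the associated $2\times2$ linear system, so $v\in C^{2}([0,\infty))$. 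Using $\psi'=\widehat{\psi}$, $\phi'=-\widehat{\phi}$ (Lemma \ref{lem:AM}), $\alpha=w^{-1}I_{\widehat{\phi}}(b^{*})$ and $\beta=w^{-1}I_{\widehat{\psi}}(b^{*})$ one computes $v'(0)=w^{-1}\big(\widehat{\psi}(0)I_{\widehat{\phi}}(b^{*})-\widehat{\phi}(0)I_{\widehat{\psi}}(b^{*})\big)=F(b^{*})=\kappa$, the last equality because $b^{*}$ solves \eqref{eqb3}, equivalently \eqref{eqb}. On $[b^{*},\infty)$, $v'=\eta$ gives $\eta-v'\equiv0$; and since $v\in C^{2}$ with $(\cL_{X}-r)v\equiv0$ on $[0,b^{*})$, continuity gives $(\cL_{X}-r)v(b^{*})=0$, while differentiating with $v'=\eta$, $v''=\eta'$ yields $\tfrac{d}{dx}\big[(\cL_{X}-r)v\big](x)=\big(\cL_{\X}-(r-\mu'(x))\big)\eta(x)<0$ for $x\ge b^{*}$ (for all $x\ge0$ in Case (A); in Case (B) because $b^{*}>\overline{x}$), so $(\cL_{X}-r)v$ decreases from $0$ on $[b^{*},\infty)$ and stays $\le0$; together with $(\cL_{X}-r)v\equiv0$ on $[0,b^{*}]$ this gives $(\cL_{X}-r)v\le0$ on $[0,\infty)$.

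The heart of the proof is $v'\ge\eta$ on $[0,b^{*})$. Put $g:=v'-\eta$ on $[0,b^{*}]$; the smooth-fit conditions give $g(b^{*})=g'(b^{*})=0$, and differentiating $(\cL_{X}-r)v=0$ gives $(\cL_{\X}-(r-\mu'))v'=0$, so $g$ solves the linear second-order ODE $(\cL_{\X}-(r-\mu'))g=q$ on $(0,b^{*})$ with $q:=-\big(\cL_{\X}-(r-\mu')\big)\eta$ continuous. Variation of parameters with the fundamental system $\widehat{\psi},\widehat{\phi}$ and the Wronskian \eqref{Wronskian} (verified by direct differentiation, using $\widehat{m}'=2/(\sigma^{2}\S')$) identifies the unique such solution as
\[
g(x)=\frac{1}{w}\int_{x}^{b^{*}}\big(\widehat{\psi}(y)\widehat{\phi}(x)-\widehat{\psi}(x)\widehat{\phi}(y)\big)\,\widehat{m}'(y)\,q(y)\,dy,\qquad x\in[0,b^{*}].
\]
Because $\widehat{\psi}>0$ is strictly increasing and $\widehat{\phi}>0$ strictly decreasing, the kernel is $>0$ for $x<y$, and $\widehat{m}'>0$, $w>0$. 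In Case (A), $q>0$ on $[0,b^{*}]$, so $g>0$ on $[0,b^{*})$. In Case (B), for $x\in[\overline{x},b^{*})$ the integration interval lies in $(\overline{x},\infty)$, where $q>0$, so again $g(x)>0$; for $x\in[0,\overline{x})$ the representation alone is inconclusive (the kernel is positive but $q$ changes sign at $\overline{x}$), so there I would argue by comparison: on $(0,\overline{x})$ one has $(\cL_{\X}-(r-\mu'))g=q<0$, while $g(0)=v'(0)-\eta(0)=\kappa-\eta(0)>0$ (here $\kappa>\eta(0)$) and $g(\overline{x})>0$, so if $g$ were nonpositive somewhere in $[0,\overline{x}]$ it would attain a nonpositive minimum at an interior point $x_{0}$, where $g'(x_{0})=0$, $g''(x_{0})\ge0$, hence $\big(\cL_{\X}-(r-\mu'(x_{0}))\big)g(x_{0})\ge0$ by Assumption \ref{ass:rate}, contradicting $q(x_{0})<0$; thus $g>0$ on $[0,\overline{x}]$. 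In all cases $\eta-v'=-g<0$ on $[0,b^{*})$.

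Assembling the pieces: at every $x\ge0$ one of the two entries of \eqref{HJB} vanishes and the other is $\le0$, so the maximum equals $0$, and $v'(0)=\kappa$, which proves the claim. I expect the step $v'\ge\eta$ on $[0,b^{*})$ to be the main obstacle: wherever $\big(\cL_{\X}-(r-\mu')\big)\eta<0$ (all of $[0,b^{*})$ in Case (A), the part $(\overline{x},b^{*})$ in Case (B)) a direct maximum-principle argument runs the wrong way, and it is the positivity of the Green-function kernel together with the sign of $q$ that carries the argument, whereas near the origin in Case (B) the representation is inconclusive and one must instead invoke the boundary datum $\kappa>\eta(0)$.
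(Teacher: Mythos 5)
Your proof is correct, and the key step $v'\ge\eta$ on $[0,b^*)$ is handled by a genuinely different method from the paper. You write $g:=v'-\eta$, invert $(\cL_{\X}-(r-\mu'))g=q$ by variation of parameters with the pair $(\widehat{\psi},\widehat{\phi})$, and read off positivity from the explicit Green kernel $\widehat{\psi}(y)\widehat{\phi}(x)-\widehat{\psi}(x)\widehat{\phi}(y)>0$ ($x<y$) together with $q>0$ on the relevant range; near the origin in Case (B), where $q$ changes sign, you supply a separate maximum-principle/comparison argument using $g(0)=\kappa-\eta(0)>0$ and $g(\overline{x})>0$. The paper instead passes to the Dayanik--Karatzas coordinate $y=\widehat{\psi}/\widehat{\phi}$: there $v'$ becomes a straight line $\vartheta$ tangent to $\widetilde{\eta}$ at $y_*$ and, via Lemma \ref{Daya}, $\widetilde{\eta}$ is concave (Case (A)) or convex-then-concave (Case (B)), so $\vartheta\ge\widetilde{\eta}$ on $[y_o,y_*]$ is a one-line geometric fact that covers both regions of Case (B) at once. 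Your route is more elementary and self-contained, at the price of a case split in Case (B); the paper's buys a uniform treatment and a picture that matches the optimal-stopping reformulation of $V'$ (Proposition \ref{prop:Vprime}). Your Step 2 is also a slightly streamlined version of the paper's: you note $(\cL_X-r)v(b^*)=0$ by $C^2$-continuity and differentiate to get $\tfrac{d}{dx}(\cL_X-r)v=(\cL_{\X}-(r-\mu'))\eta<0$ on $(b^*,\infty)$, whereas the paper derives the equivalent algebraic identity \eqref{vb} and integrates; the two are the same computation read in opposite directions.
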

\begin{proof}
The proof is organized in three steps.\vspace{0.25cm}

\emph{Step 1.} By construction one clearly has that $v \in C^2(\R_+)$, $v'(0)=\kappa > \eta(0)$, $(\cL_X - r)v(x)=0$ on $[0,b^*)$ and $v'(x) = \eta(x)$ on $[b^*,\infty)$.
\vspace{0.25cm}

\emph{Step 2.} Here we show that $(\cL_X - r)v(x)\leq 0$ on $[b^*,\infty)$, so that $(\cL_X - r)v(x)\leq 0$ on $\R_+$.
Then take $x \geq b^*$ given and fixed, recall \eqref{eq:LXhat}, and notice that integrating by parts one obtains
\begin{align}
\label{stimaineq}
& 0 \geq \int_{b^*}^x \big(\cL_{\X} - (r-\mu'(y))\big)\eta(y) dy = \frac{1}{2}\sigma^2(x)\eta'(x) + \mu(x)\eta(x) \nonumber \\
& - r\bigg[\int_{b^*}^x \eta(y) dy + \frac{1}{r}\Big(\frac{1}{2}\sigma^2(b^*)\eta'(b^*) + \mu(b^*)\eta(b^*)\Big)\bigg],
\end{align}
where the first inequality is due to the fact that $b^*>0$ in Case (A) and $b^* > \overline{x}$ in Case (B).
We now claim (and prove later) that
\begin{equation}
\label{vb}
\frac{1}{2}\sigma^2(b^*)\eta'(b^*) + \mu(b^*)\eta(b^*) = r\Big(\alpha\psi(b^*) + \beta\phi(b^*)\Big) = rv(b^*),
\end{equation}
so that we can continue from \eqref{stimaineq} by writing
\begin{align}
\label{stimaineq2}
& 0 \geq \int_{b^*}^x \big(\cL_{\X} - (r-\mu'(y))\big)\eta(y) dy \nonumber \\ 
&= \frac{1}{2}\sigma^2(x)\eta'(x) + \mu(x)\eta(x) - r\bigg[\int_{b^*}^x \eta(y) dy + v(b^*)\bigg]  \\
& = \big(\cL_X - r\big)\bigg(\int_{b^*}^x \eta(y) dy + v(b^*)\bigg) = \big(\cL_X - r\big)v(x), \nonumber
\end{align}
where \eqref{candidate} has been used in the last step.

To complete this part of the proof, it thus just remains to prove \eqref{vb}.
By continuity of $v(\,\cdot\,)$ we have $v(b^*) = \alpha\psi(b^*) + \beta\phi(b^*)$, with $\alpha$ and $\beta$ as in \eqref{AB}; that is
\begin{align}
\label{stimavb}
& v(b^*) = \frac{1}{\phi''(b^*)\psi'(b^*) - \psi''(b^*)\phi'(b^*)}\times \nonumber \\
& \times \Big[\eta(b^*)\Big(\phi''(b^*)\psi(b^*) - \psi''(b^*)\phi(b^*)\Big) + \eta'(b^*)\Big(\psi'(b^*)\phi(b^*) - \phi'(b^*)\psi(b^*)\Big)\Big]. 
\end{align}
By noticing that 
\begin{equation*}
\psi'(b^*)\phi(b^*) - \phi'(b^*)\psi(b^*) = WS'(b^*) \qquad \phi''(b^*)\psi(b^*) - \psi''(b^*)\phi(b^*) = -WS''(b^*),
\end{equation*}
and that $S''(x) = - \frac{2\mu(x)}{\sigma^2(x)}S'(x)$ for any $x \in \R$, we obtain from \eqref{stimavb} that
\begin{equation}
\label{stimavb2}
v(b^*) = \frac{1}{\phi''(b^*)\psi'(b^*) - \psi''(b^*)\phi'(b^*)}\left[\frac{2S'(b^*)}{\sigma^2(b^*)}\right]\left[\frac{1}{2}\sigma^2(b^*)\eta'(b^*) + \mu(b^*)\eta(b^*)\right]. 
\end{equation}
Since now $\phi$ and $\psi$ solve $(\cL_X - r)u=0$ we have for any $x \in \R$
$$\phi''(x) = -\frac{2\mu(x)}{\sigma^2(x)}\phi'(x) + \frac{2r}{\sigma^2(x)}\phi(x), \qquad \psi''(x) = -\frac{2\mu(x)}{\sigma^2(x)}\psi'(x) + \frac{2r}{\sigma^2(x)}\psi(x),$$
and therefore 
$$\phi''(b^*)\psi'(b^*) - \psi''(b^*)\phi'(b^*) = \frac{2r}{\sigma^2(b^*)}WS'(b^*).$$
This last relation used in \eqref{stimavb2} finally yields \eqref{vb}.
\vspace{0.25cm}

\emph{Step 3.} We now show that $v'(x) \geq \eta(x)$ for any $x \in [0,b^*)$, so that $v'(x) \geq \eta(x)$ for all $x \geq 0$. By the Neumann boundary condition we have $v'(0) = \kappa > \eta(0)$, therefore we need to prove the inequality only in the open interval $(0,b^*)$. Notice that by differentiating the first equation of \eqref{FBP}, we find that $v'$ of \eqref{candidate} solves 
$$\big(\cL_{\X} - (r-\mu'(x)\big)v'(x) = 0, \qquad x \in (0,b^*),$$
together with the boundary conditions $v'(0) = \kappa$ and $v'(b^*)=\eta(b^*)$. Recalling $\tau_0=\inf\{t\geq 0: \X^x_t \leq 0\}$, and introducing $\tau_{b^*}:=\inf\{t\geq 0: \X^x_t \geq b^*\}$, an application of Feynman-Kac formula gives
\begin{equation}
\label{reprv}
v'(x) = \EE_x\Big[ \kappa e^{-\int_0^{\tau_0} (r - \mu'(\X_s))ds}\mathds{1}_{\{\tau_0 < \tau_{b^*}\}} + \eta(b^*)e^{-\int_0^{\tau_{b^*}} (r - \mu'(\X_s))ds}\mathds{1}_{\{\tau_0 > \tau_{b^*}\}}\Big].
\end{equation} 

By defining the strictly positive and strictly increasing function $\FF(x):={\widehat{\psi}(x)}/{\widehat{\phi}(x)}$, $x \in \R$, we can use Lemma 2.3 in \cite{Dayanik08} to rewrite \eqref{reprv} in the equivalent form
\begin{equation}
\label{reprv2}
\frac{v'(x)}{\widehat{\phi}(x)} = \frac{\kappa}{\widehat{\phi}(0)}\left[\frac{\FF(b^*) - \FF(x)}{\FF(b^*) - \FF(0)}\right] + \frac{\eta(b^*)}{\widehat{\phi}(b^*)}\left[\frac{\FF(x) - \FF(0)}{\FF(b^*) - \FF(0)}\right].
\end{equation} 

In the spirit of  \cite{Dayanik08} (see also \cite{DayanikKaratzas}) we now make a change of variable. We define $y:=\FF(x)$, and we set $y_o:=\FF(0)$, $y_*:=\FF(b^*)$, and $\overline{y}:=\FF(\overline{x})$ (if Case (B) holds true). Also, for any continuously differentiable real-valued function $f$ we set
$$\widetilde{f}(y):=\Big(\Big(\frac{f}{\widehat{\phi}}\Big) \circ \FF^{-1}\Big)(y), \qquad y \geq y_o.$$

By using these definitions, we can rewrite \eqref{reprv2} in the new scale as $\widetilde{v'}(y) = \vartheta(y)$, $y \geq y_o$, where we have set
\begin{equation}
\label{retta}
\vartheta(y):=\kappa\left[\frac{y_* - y}{y_* - y_o}\right] + \widetilde{\eta}(y_*)\left[\frac{y - y_o}{y_* - y_o}\right], \quad y \geq y_o.
\end{equation}

Notice that $\vartheta$ is the straight line connecting the points $(y_o,\kappa)$ and $(y_*,\widetilde{\eta}(y_*))$. Indeed, by definition, $\vartheta(y_o)=\kappa$ and $\vartheta(y_*)=\widetilde{\eta}(y_*)$. Moreover, $\vartheta$ is tangent to $\widetilde{\eta}$ at the point $(y_*,\widetilde{\eta}(y_*))$. To see this we first evaluate
$$\vartheta'(y_*) =  -\frac{\kappa}{y_* - y_o} + \frac{\widetilde{\eta}(y_*)}{y_* - y_o}.$$ Then, by direct calculations employing the definition of $\widetilde{\eta}$ and the fact that $\widetilde{\eta}'(y_*) = (\frac{\eta}{\widehat{\phi}})'(y_*)/\FF'(y_*)$, we find the following equivalences
\begin{align*}
&\vartheta'(y_*)=\widetilde{\eta}'(y_*) \Longleftrightarrow -\frac{\kappa}{y_* - y_o} + \frac{\widetilde{\eta}(y_*)}{y_* - y_o} =  \widetilde{\eta}'(y_*)  \nonumber \\
& \Longleftrightarrow \Big(\frac{v'}{\widehat{\phi}}\Big)'(b^*) = \Big(\frac{\eta}{\widehat{\phi}}\Big)'(b^*) \Longleftrightarrow \left[\frac{v'(x) - \eta(x)}{\widehat{\phi}(x)}\right]' = 0.
\end{align*}
Since the equality in the last term above holds true due to the fact that $v''(b^*) = \eta'(b^*)$ and $v'(b^*) = \eta(b^*)$, we conclude that $\vartheta$ is tangent to $\widetilde{\eta}$ at the point $(y_*,\widetilde{\eta}(y_*))$.

Recall now that we have either $\big(\cL_{\X} - (r-\mu'(x))\big)\eta(x) < 0$ for all $x\geq 0$ under Case (A), or $\big(\cL_{\X} - (r-\mu'(x))\big)\eta(x) > 0$ for $0\leq x \leq \overline{x}$, and $\big(\cL_{\X} - (r-\mu'(x))\big)\eta(x) < 0$ for $x > \overline{x}$ if Case (B) holds true. 
By Lemma \ref{Daya} in the Appendix we know that the previous relations are equivalent to the fact that $\widetilde{\eta}$ either is strictly concave for any $y\geq y_o$ (if Case (A) holds), or it is strictly convex on $[y_o, \overline{y}]$ and then strictly concave for $y > \overline{y}$ (if Case (B) holds). Together with $\vartheta(y_o) > \widetilde{\eta}(y_o)$ (i.e.\ $\kappa > {\eta}(0)$), these concavity/convexity properties of $\widetilde{\eta}$ give that $\vartheta(y) \geq \widetilde{\eta}(y)$ on $[y_o,y_*]$. Therefore, $v'(x) \geq \eta(x)$ on $[0,b^*]$, and this completes the proof.

\end{proof}

\subsubsection{The Optimal Solution}

Given $x\geq 0$ and $(D,L)\in\mathcal{S}\times\mathcal{S}$, let $Z^{x,D,L}$ be the unique strong solution (see, e.g., \cite{Protter}, Theorem V.7) to
$$dZ^{x,D,L}_t=\mu(Z^{x,D,L}_t)dt+\sigma(Z^{x,D,L}_t)dB_t+dL_t-dD_t,\quad t>0, \qquad Z^{x,D,L}_{0}=x.$$
Then, let $(D^*, L^{*})=(D^*_t, L^{*}_t)_t$ be the couple of nondecreasing processes that solves the double Skorokhod reflection problem $\textbf{SP}(0,b^*;x)$ defined as follows:
\begin{align}
\text{Find $(D,L)\in\mathcal{S}\times\mathcal{S}$ s.t.}
\left\{
\begin{array}{l}
\displaystyle Z^{x,D,L}_t\in[0,b^*], \text{$\P$-a.s.~for $t > 0$},\\[+6pt]
\displaystyle \int^{T}_0{\mathds{1}_{\{Z^{x,D,L}_t>0\}}dL_t}=0, \text{$\P$-a.s.~for any $T>0$,}\\[+6pt]
\displaystyle \int^{T}_0{\mathds{1}_{\{Z^{x,D,L}_t<b^*\}}dD_t}=0, \text{$\P$-a.s.~for any $T>0$.}
\end{array}
\right.
\end{align}
Under Assumption \ref{ass:coef}, Problem $\textbf{SP}(0,b^*;x)$ admits a unique pathwise solution (cf., e.g., Theorem 4.1 in \cite{Tanaka}), which is clearly such that $\text{supp}\{dD^*_t\}\cap \text{supp}\{dL^{*}_t\}=\emptyset$. In the following, we set $D^*_{0}=0=L^*_{0}$. Moreover, given the fact that $x \geq 0$, the process $t \mapsto L^*_t$ is continuous for $t \geq 0$, whereas $D^*$ has continuous sample paths, apart a possible initial jump of amplitude $(x -b^*)$ at time zero. 

Recalling \eqref{state:X}, for the subsequent analysis it is worth bearing in mind that pathwise uniqueness implies $Z^{D^*,L^*} = X^{D^*}$ and $L^*=L^{D^*}$.

\begin{proposition}
Let $x\geq0$. The process $D^*$ such that $(D^*, L^{*})$ solves $\textbf{SP}(0,b^*;x)$ is an admissible control.
\end{proposition}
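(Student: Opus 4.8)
The goal is to verify that the candidate optimal control $D^*$ — the one arising from the double Skorokhod reflection at $0$ and $b^*$ — belongs to $\mathcal{A}(x)$; that is, that $D^*\in\mathcal{S}$, that the integrability condition \eqref{intadmiss} holds, and that the no-crossing-the-origin condition \eqref{intadmiss-2} holds. Membership in $\mathcal{S}$ is essentially automatic from the construction of the solution to $\textbf{SP}(0,b^*;x)$: $D^*$ is $\mathbb{F}$-adapted, nondecreasing, and left-continuous with $D^*_0=0$ by our convention, so the only substantive points are \eqref{intadmiss} and \eqref{intadmiss-2}.

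For \eqref{intadmiss-2}, the plan is to use that on $\textbf{SP}(0,b^*;x)$ the controlled process $X^{D^*}=Z^{x,D^*,L^*}$ stays in $[0,b^*]$ and $D^*$ increases only when $X^{D^*}=b^*$. Since $b^*>0$ and $D^*$ has no jump except possibly the initial one, which (if $x>b^*$) brings the state from $x$ down to $b^*\ge 0$ — so $\Delta D^*_0 = (x-b^*)^+\le x = X^{D^*}_0$ — while at all later times $X^{D^*}_{t}=b^*>0$ whenever $dD^*_t$ charges $\{t\}$ and $D^*$ is continuous there, we get $D^*_{t+}-D^*_t=0\le X^{D^*}_t$ for $t>0$. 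Hence \eqref{intadmiss-2} holds. (One should be slightly careful that the initial jump, if present, does not cross zero; but $(x-b^*)^+\le x$ settles this.)

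For the integrability condition \eqref{intadmiss}, I would treat the two terms separately. The reflection term $\kappa\,\E_x\!\big[\int_0^\infty e^{-rt}dL^{D^*}_t\big]$ is finite because the reflected process $X^{D^*}$ lives in the bounded interval $[0,b^*]$: applying Itô's formula (or Dynkin) to $e^{-rt}v(X^{D^*}_t)$ — or, more elementarily, to $e^{-rt}$ times a suitable bounded $C^2$ function with nonzero derivative at $0$ — and using that $v'(0)=\kappa$, that $(\cL_X-r)v\le 0$, and that $v'(x)=\eta(x)$ on $[b^*,\infty)$ controls the $dD^*$-term, yields a bound of the form $\E_x[\int_0^\infty e^{-rt}dL^{D^*}_t]\le C$ with $C$ depending only on $\sup_{[0,b^*]}|v|$ and $r$; the boundedness of $X^{D^*}$ is what makes all the relevant quantities finite. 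For the control term $\E_x[\int_0^\infty e^{-rt}|\eta(X^{D^*}_t)|\circ dD^*_t]$, I would again use the bound $0\le X^{D^*}_t\le b^*$ together with continuity of $\eta$ on $[0,b^*]$: set $C_\eta:=\sup_{[0,b^*]}|\eta|<\infty$, so that by \eqref{defintegral} the integrand is dominated by $C_\eta\,e^{-rt}dD^*_t$, and it remains to show $\E_x[\int_0^\infty e^{-rt}dD^*_t]<\infty$. This last bound follows from an integration-by-parts / Dynkin argument applied to $e^{-rt}X^{D^*}_t$: since $dX^{D^*}_t = \mu(X^{D^*}_t)dt+\sigma(X^{D^*}_t)dB_t+dL^{D^*}_t-dD^*_t$ with $|\mu|$ bounded on $[0,b^*]$ and $X^{D^*}$ bounded, one gets $\E_x[\int_0^\infty e^{-rt}dD^*_t]\le x + \E_x[\int_0^\infty e^{-rt}dL^{D^*}_t] + r^{-1}\sup_{[0,b^*]}|\mu| < \infty$, the middle term having just been bounded.

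The main obstacle is the careful handling of the localization in the Dynkin/Itô arguments bounding $\E_x[\int_0^\infty e^{-rt}dL^{D^*}_t]$ and $\E_x[\int_0^\infty e^{-rt}dD^*_t]$: one must introduce a localizing sequence of stopping times, use the boundedness of $X^{D^*}$ on $[0,b^*]$ to control the stochastic integral in expectation (it is a true martingale since $\sigma$ is bounded on $[0,b^*]$), and then pass to the limit via monotone convergence on the $dL^{D^*}$ and $dD^*$ integrals and dominated convergence on the discounted bounded terms. The initial jump of $D^*$ (when $x>b^*$) also needs a remark, since it contributes $\int_0^{(x-b^*)^+}\eta(x-z)\,dz$ to the functional, which is plainly finite by continuity of $\eta$. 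Everything else is routine once boundedness of the state is exploited.
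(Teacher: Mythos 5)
Your overall structure matches the paper's: you verify that $D^*\in\mathcal{S}$, the no-crossing condition \eqref{intadmiss-2} (with exactly the same bookkeeping $(x-b^*)^+\le x$ for the initial jump and $D^*_{t+}-D^*_t=0\le X^{D^*}_t$ for $t>0$), and the integrability condition \eqref{intadmiss}; and, like the paper, you isolate the initial-jump contribution $\int_0^{(x-b^*)^+}|\eta(x-z)|\,dz$, which is finite by continuity of $\eta$. The genuine divergence is in how the finiteness of $\E_x[\int_0^\infty e^{-rt}\,dL^{D^*}_t]$ and $\E_x[\int_0^\infty e^{-rt}\,dD^{c,*}_t]$ is obtained: the paper simply invokes Lemma~2.1 of \cite{Shreveetal} (equations (2.16)--(2.17) there), whereas you propose a self-contained Dynkin argument on the compact interval $[0,b^*]$.

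That self-contained route is legitimate, but as you have written it there is a gap at the first step. Applying It\^o to $e^{-rt}v(X^{D^*}_t)$ and using $(\cL_X-r)v=0$ on $[0,b^*]$, $v'(0)=\kappa$, $v'(b^*)=\eta(b^*)$, gives an identity of the form
\begin{equation*}
\E_x\big[e^{-rT}v(X^{D^*}_T)\big]-v(x)=\kappa\,\E_x\Big[\int_0^T e^{-rt}\,dL^{D^*}_t\Big]-\eta(b^*)\,\E_x\Big[\int_0^T e^{-rt}\,dD^{c,*}_t\Big]+(\text{jump term}),
\end{equation*}
which only relates the two unknowns; since $\eta(b^*)=v'(b^*)$ is in general nonzero, it does not by itself bound the $L$-term, so the phrase ``$v'(x)=\eta(x)$ on $[b^*,\infty)$ controls the $dD^*$-term'' overstates what this identity gives. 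To bound $\E_x[\int_0^\infty e^{-rt}\,dL^{D^*}_t]$ first, as your proof logic requires, you must pick the auxiliary function $g$ not merely with $g'(0)\neq 0$ but also with $g'(b^*)=0$, so that the $dD^{c,*}$-contribution drops out; e.g.\ $g(y)=y-y^2/(2b^*)$ works, yielding
\begin{equation*}
\E_x\Big[\int_0^T e^{-rt}\,dL^{D^*}_t\Big]=\E_x\big[e^{-rT}g(X^{D^*}_T)\big]-g(x\wedge b^*)-\E_x\Big[\int_0^T e^{-rt}(\cL_X-r)g(X^{D^*}_t)\,dt\Big],
\end{equation*}
whose right-hand side is bounded uniformly in $T$ because $g,g',g'',\mu,\sigma$ are bounded on $[0,b^*]$ and the stochastic integral is a true martingale. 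After this, your second step (It\^o on $e^{-rt}X^{D^*}_t$ to bound $\E_x[\int_0^\infty e^{-rt}\,dD^*_t]$) does go through. With the condition $g'(b^*)=0$ made explicit, your argument becomes a correct, self-contained alternative to the paper's citation of \cite{Shreveetal}.
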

\begin{proof}
Recall that $Z^{D^*,L^*} = X^{D^*}$ and $L^*=L^{D^*}$, by pathwise uniqueness. Clearly $D^* \in \mathcal{S}$. Also, $D^*_{0+}-D^*_0=(x-b^*)^+\leq x = X^{x,D^*}_0$, whereas $X^{x,D^*}_t\geq 0 = D^*_{t+}-D^*_t$ for any $t> 0$. To prove the admissibility of $D^*$, it thus remains to show that (cf.\ \eqref{intadmiss})
\begin{equation}
\label{integraladm}
\E_x\bigg[\int_{0}^{\infty} e^{-rt}|\eta(X^{D^*}_t)| \circ dD^*_t + \kappa \int_{0}^\infty e^{-rt} dL^{*}_t\bigg] < \infty.
\end{equation}
By \eqref{defintegral} and the fact that $(D^*, L^{*})$ solves $\textbf{SP}(0,b^*;x)$ we have
\begin{align}
\label{integraladm-2}
&\E_x\bigg[\int_{0}^{\infty} e^{-rt}|\eta(X^{D^*}_t)| \circ dD^*_t \bigg] = \int_0^{(x-b^*)^+}|\eta(x - z)| dz \nonumber \\
&+ |\eta(b^*)|\E_x\bigg[\int_{0}^{\infty} e^{-rt} dD^{c,*}_t \bigg],
\end{align}
where we have used that $\text{supp}\{dD^{c,*}\} = \{b^*\}$. By the continuity of $\eta$ we have 
\begin{equation}
\label{inetgral1}
\int_0^{(x-b^*)^+}|\eta(x - z)| dz \leq (x-b^*)^+ \max_{u \in [0,(x-b^*)^+]}|\eta(x-u)| < \infty.
\end{equation}
Also, arguing as in the proof of Lemma 2.1 of \cite{Shreveetal} (see in particular equations (2.16) and (2.17) therein), we have that 
\begin{equation}
\label{inetgral2}
\E_x\bigg[\int_{0}^{\infty} e^{-rt} dD^{c,*}_t \bigg] < \infty \qquad \text{and} \qquad \E_x\bigg[\int_{0}^{\infty} e^{-rt} dL^{*}_t \bigg] < \infty.
\end{equation}
By combining \eqref{inetgral1}, \eqref{inetgral2} and \eqref{integraladm-2}, we conclude that \eqref{integraladm} holds true, and therefore that $D^*\in \mathcal{A}(x)$.
\end{proof}

\begin{theorem}
\label{teo:verify}
Let $(D^*, L^{*})=(D^*_t, L^{*}_t)_t$ be the couple of nondecreasing processes solving $\textbf{SP}(0,b^*;x)$, and let $v$ as in \eqref{candidate}. Then one has that $v=V$ on $\R_+$ and $D^*$ is optimal.
\end{theorem}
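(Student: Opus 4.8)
The plan is to run a standard verification argument. First I would take an arbitrary admissible control $D\in\cA(x)$ and apply It\^o's formula (in its version for semimartingales with jumps, i.e.\ It\^o--Meyer) to the process $e^{-rt}v(X^D_t)$ between $0$ and a suitable sequence of localizing stopping times $\tau_n\uparrow\infty$. Splitting the dynamics of $X^D$ into its continuous martingale part, its absolutely continuous drift part, the reflection term $dL^D$, and the (continuous plus jump) decrease $dD$, one gets
\begin{align*}
e^{-r(t\wedge\tau_n)}v(X^D_{t\wedge\tau_n}) &= v(x) + \int_0^{t\wedge\tau_n} e^{-rs}(\cL_X-r)v(X^D_s)\,ds + \int_0^{t\wedge\tau_n} e^{-rs}v'(X^D_s)\,dL^D_s\\
&\quad - \int_0^{t\wedge\tau_n} e^{-rs}v'(X^D_s)\,dD^c_s - \sum_{s\leq t\wedge\tau_n} e^{-rs}\big(v(X^D_s)-v(X^D_{s+})\big) + M_{t\wedge\tau_n},
\end{align*}
where $M$ is a local martingale. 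For the jump term I would use the elementary identity $v(X^D_s)-v(X^D_{s+})=\int_0^{\Delta D_s}v'(X^D_s-z)\,dz$ valid since $\Delta D_s\le X^D_s$ keeps the argument in $\R_+$.

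Next I would invoke Proposition~\ref{prop:HJB}: $(\cL_X-r)v\le 0$ everywhere, so the $ds$-integral is nonpositive; $v'\ge\eta$ everywhere, so $-\int e^{-rs}v'(X^D_s)\,dD^c_s \le -\int e^{-rs}\eta(X^D_s)\,dD^c_s$ and likewise $v(X^D_s)-v(X^D_{s+})=\int_0^{\Delta D_s}v'(X^D_s-z)\,dz\ge\int_0^{\Delta D_s}\eta(X^D_s-z)\,dz$; and on the support of $dL^D$ the state is at $0$, where $v'(0)=\kappa$, so the reflection term equals $-\kappa\int e^{-rs}\,dL^D_s$ (recall $L^D$ is continuous since $x\ge0$). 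Rearranging and recalling the definition \eqref{defintegral} of $\int e^{-rs}\eta(X^D_s)\circ dD_s$, this yields
$$v(x) \ge \E_x\bigg[\int_0^{t\wedge\tau_n} e^{-rs}\eta(X^D_s)\circ dD_s - \kappa\int_0^{t\wedge\tau_n} e^{-rs}\,dL^D_s\bigg] + \E_x\big[e^{-r(t\wedge\tau_n)}v(X^D_{t\wedge\tau_n})\big],$$
after taking expectations, where the localization kills the martingale part. Then I would let $n\to\infty$ and $t\to\infty$: admissibility \eqref{intadmiss} and dominated convergence handle the reward integrals, while for the boundary term one needs $\liminf \E_x[e^{-r(t\wedge\tau_n)}v(X^D_{t\wedge\tau_n})]\ge 0$; this follows from a growth estimate on $v$ (linear growth of $v$ via $v'=\eta$ on $[b^*,\infty)$ together with $\eta/\widehat\psi\to0$ and the standard moment bounds \eqref{eq:growth} on $X^D$, mirroring Lemma~2.1 of \cite{Shreveetal}). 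This gives $v(x)\ge\cJ_x(D)$ for all admissible $D$, hence $v\ge V$.

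For the reverse inequality I would repeat the computation with $D=D^*$, where $(D^*,L^*)$ solves $\textbf{SP}(0,b^*;x)$. Now every inequality used above becomes an equality: $X^{D^*}$ stays in $[0,b^*]$, so $(\cL_X-r)v(X^{D^*}_s)=0$ on $[0,b^*)$ and $dD^{c,*}$ is supported on $\{b^*\}$ where $v'=\eta$; the jump part of $D^*$ occurs only at $t=0$ with $\Delta D^*_0=(x-b^*)^+$, and on $[0,\Delta D^*_0]$ the argument $x-z$ ranges over $[b^*,x]$ where again $v'=\eta$; and $dL^*$ charges only $\{X^{D^*}=0\}$ where $v'(0)=\kappa$. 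Hence the chain of inequalities collapses to $v(x)=\cJ_x(D^*)+\lim_n\E_x[e^{-r(t\wedge\tau_n)}v(X^{D^*}_{t\wedge\tau_n})]$, and the transversality limit is zero because $X^{D^*}$ is bounded (taking values in $[0,b^*]$, apart from the initial jump) so $v(X^{D^*})$ is bounded and $e^{-rt}\to0$. Therefore $v(x)=\cJ_x(D^*)\le V(x)$, which combined with $v\ge V$ gives $v=V$ and the optimality of $D^*$.

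The main obstacle I expect is the transversality / boundary term for a general admissible $D$: controlling $\E_x[e^{-r(t\wedge\tau_n)}v(X^D_{t\wedge\tau_n})]$ from below requires a careful growth bound on $v$ at $+\infty$ combined with uniform integrability, and one must be slightly careful that $v$ is not sign-definite (in Case (C)-adjacent regimes $v$ can be negative), so a one-sided bound via $|v(x)|\le C(1+\widehat\psi(x))$-type control and the $\eta/\widehat\psi\to0$ hypothesis in Assumption~\ref{ass:costs}-(i) is the right tool; the rest is bookkeeping with \eqref{defintegral} and the smooth-fit properties already established in Proposition~\ref{prop:HJB}. A secondary technical point is justifying the It\^o--Meyer expansion and the localization so that the stochastic integral is a genuine martingale on each $[0,\tau_n]$, which is routine given the local boundedness of $v'$ and $\sigma$.
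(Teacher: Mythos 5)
Your proposal follows the paper's verification argument essentially line for line: It\^o--Meyer on a localized interval, the HJB inequalities from Proposition~\ref{prop:HJB} together with the Neumann condition to get $v(x)\ge\mathcal{J}_x(D)$, and equality when $D=D^*$ because $X^{D^*}$ stays in $[0,b^*]$. The one piece you leave as a sketch --- the transversality limit --- is exactly the paper's Lemma~\ref{lem:trasvers}; note however that your ``linear growth of $v$'' heuristic is not quite right when $\eta$ is unbounded (which Cases (A)--(B) allow), and the paper's actual argument instead uses the comparison $\tau_n\ge\sigma_n$ with the hitting time of the uncontrolled reflected process, an explicit ODE representation of $\E_x[e^{-r\sigma_n}]$ bounding it by $C(x)/\psi(n)$, and De L'H\^opital with the hypothesis $\eta/\widehat\psi\to 0$ --- the ingredient you correctly flagged as essential.
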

\begin{proof}
\emph{Step 1.} Let $x \geq 0$, $D \in \mathcal{A}(x)$, and $(X^{x,D}, L^{x,D})$ the solution of the SDE with reflecting boundary condition at zero \eqref{state:X}. For $n \geq 1$ set $\tau_n:=\inf\{t \geq 0: X^{D}_t \geq n\}$ $\P_x$-a.s. Since $v \in C^2(\R_+)$, we can apply It\^o-Meyer's formula for semimartingales to the process $e^{-rt}v(X^{D}_t)$ on $[0,\tau_n]$ and obtain
\begin{align}
\label{verifico1}
& \E_x\Big[e^{-r\tau_n}v(X^{D}_{\tau_n})\Big] - v(x) = \E_x\bigg[\int_{0}^{\tau_n} e^{-rs}\big(\cL_X -r \big)v(X^{D}_s) ds\bigg] \nonumber \\
& - \E_x\bigg[\int_{0}^{\tau_n} e^{-rs}v'(X^{D}_s) dD^c_s + \sum_{s < \tau_n} e^{-rs}\big(v(X^{D}_{s+}) - v(X^{D}_{s})\big)\bigg] \\
& + \E_x\bigg[\int_{0}^{\tau_n} e^{-rs}v'(X^{D}_s) dL^D_s\bigg]. \nonumber
\end{align} 
Notice that here we have used that for any $D\in \mathcal{A}(x)$, $x\geq0$, $t \mapsto L^D_{t}$ is continuous a.s., and therefore the (random) measure on $[0,\infty)$ $dL^D_{\cdot}$ has only continuous part. 
Because
$$\sum_{s < \tau_n} e^{-rs}\big(v(X^{D}_{s+}) - v(X^{D}_{s})\big) = \sum_{s < \tau_n} e^{-rs}\int_0^{\Delta D_s}v'(X^{D}_s-z) dz,$$
and because $v$ solves \eqref{HJB}, we can continue from \eqref{verifico1} by writing
\begin{align}
\label{verifico2}
& \E_x\Big[e^{-r\tau_n}v(X^{D}_{\tau_n})\Big] - v(x) \leq -\E_x\bigg[\int_{0}^{\tau_n} e^{-rs}v'(X^{D}_s)dD^c_s\bigg] \nonumber \\
& - \E_x\Big[\sum_{s < \tau_n} e^{-rs}\int_{0}^{\Delta D_s}v'(X^{D}_{s}-z)dz\Big] + \E_x\bigg[\int_{0}^{\tau_n} e^{-rs}v'(X^{D}_s)dL^D_s\bigg] \\
& \leq \E_x\bigg[-\int_{0}^{\tau_n} e^{-rs}\eta(X^{D}_s) \circ dD_s + \int_{0}^{\tau_n} e^{-rs}v'(X^{D}_s)dL^D_s\bigg]; \nonumber
\end{align}
that is,
\begin{align}
\label{verifico3}
& v(x) \geq \E_x\Big[e^{-r\tau_n}v(X^{D}_{\tau_n})\Big]  \nonumber \\
& +\E_x\bigg[\int_{0}^{\tau_n} e^{-rs}\eta(X^{D}_s) \circ dD_s - \kappa \int_{0}^{\tau_n} e^{-rs}dL^D_s\bigg].
\end{align}
By admissibility of $D$ we have
\begin{equation}
\label{admissibility}
\E_x\bigg[\int_{0}^{\infty} e^{-rs}|\eta(X^{D}_s)| \circ dD_s\bigg] + \E_x\bigg[\int_{0}^{\infty} e^{-rs}dL^D_s\bigg] < \infty.
\end{equation}
Moreover, by Lemma \ref{lem:trasvers} in Appendix, we also have 
\begin{equation}
\label{trasversality}
\lim_{n\uparrow \infty}\E_x\big[e^{-r\tau_n}v(X^{D}_{\tau_n})\big] = 0.
\end{equation}
Then, noticing that $\tau_n \uparrow \infty$ $\P_x$-a.s.\ for $n \uparrow \infty$, taking limits in \eqref{verifico3}, exploiting \eqref{admissibility} and \eqref{trasversality} we obtain by the dominated convergence theorem that
\begin{align}
\label{verifico4}
& v(x) \geq \E_x\bigg[\int_{0}^{\infty} e^{-rs}\eta(X^{D}_s) \circ dD_s - \kappa \int_{0}^{\infty} e^{-rs}dL^D_s\bigg].
\end{align}
Since the previous holds for any $D \in \mathcal{A}(x)$ and any $x\geq0$, we conclude that $v \geq V$ on $\R_+$.
\vspace{0.25cm}

\emph{Step 2.} Fix again an arbitrary $x\geq 0$, take now $(D^*, L^{*})$ solving $\textbf{SP}(0,b^*;x)$ and recall that $Z^{D^*,L^*} = X^{D^*}$ and $L^*=L^{D^*}$, by pathwise uniqueness. Since $X^{D^*}_t \in [0,b^*]$ $\P_x$-a.s.\ for all $t\geq 0$, all the inequalities leading to \eqref{verifico3} become equalities and thus yield
\begin{align}
\label{verifico3bis}
& v(x) = \E_x\Big[e^{-r\tau_n}v(X^{D^*}_{\tau_n})\Big]  \nonumber \\
& + \E_x\bigg[\int_{0}^{\tau_n} e^{-rs}\eta(X^{D^*}_s) \circ dD^*_s - \kappa \int_{0}^{\tau_n} e^{-rs}dL^{*}_s\bigg].
\end{align}
Recall now that \eqref{integraladm} holds true, and notice that $\lim_{n\uparrow \infty}\E_x\big[e^{-r\tau_n}v(X^{D^{*}}_{\tau_n})\big] = 0$ since $X^{D^*}_t \in [0,b^*]$ $\P_x$-a.s.\ for all $t\geq 0$, $v$ is continuous, and $\tau_n \uparrow \infty$ $\P_x$-a.s.\ as $n \uparrow \infty$. Therefore, by taking limits in \eqref{verifico3bis} as $n\uparrow \infty$, and invoking the dominated convergence theorem we find
\begin{align}
\label{verifico4bis}
& v(x) = \E_x\bigg[\int_{0}^{\infty} e^{-rs}\eta(X^{D^*}_s) \circ dD^*_s - \kappa \int_{0}^{\infty} e^{-rs}dL^{*}_s\bigg] \leq V(x),
\end{align}
where the last equality follows by admissibility of $D^*$. We thus conclude that $v=V$ on $\R_+$ and that $D^*$ is optimal.
\end{proof}

As a byproduct of Theorem \ref{teo:verify} we have the following proposition.

\begin{proposition}
\label{prop:Vprime}
Let $x\geq 0$ and recall $\tau_0=\inf\{t\geq 0: \X_t \leq 0\}$, $\widehat{\P}_x$-a.s. Then one has that
\begin{equation}
\label{vprimeOS}
V'(x) = \sup_{\tau\geq 0}\EE_x\Big[ \kappa e^{-\int_0^{\tau_0} (r - \mu'(\X_s))ds}\mathds{1}_{\{\tau_0 \leq \tau\}} + \eta(\X_{\tau})e^{-\int_0^{\tau} (r - \mu'(\X_s))ds}\mathds{1}_{\{\tau_0 > \tau\}}\Big], 
\end{equation}
and the stopping time $\tau^{\star}:=\inf\{t\geq 0: \X^x_t \geq b^*\}$ is optimal.
\end{proposition}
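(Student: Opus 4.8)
The plan is to exploit the identification $V=v$ established in Theorem \ref{teo:verify}, and to recognize the right-hand side of \eqref{vprimeOS} as the value of an optimal stopping problem for the diffusion $\X$, killed at $\tau_0=\inf\{t\geq 0:\X_t\leq 0\}$, discounted at the positive rate $r-\mu'(\cdot)$, with stopping payoff $\eta$ in the interior and boundary payoff $\kappa$ at $0$. Since all the ingredients are already contained in the construction of $v$, the strategy is to verify that $v'$ solves the variational inequality associated with this stopping problem, and then to run a standard optimal-stopping verification argument, reading the optimal rule off the free boundary $b^*$. Throughout I would write $G_{\sigma}$ for the random variable appearing under the expectation in \eqref{vprimeOS} when $\tau$ is replaced by an arbitrary stopping time $\sigma$.

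First I would collect the properties of $v'$ obtained along the proof of Proposition \ref{prop:HJB}: one has $v'\in C^1(\R_+)\cap C^2(\R_+\setminus\{b^*\})$, $v'(0)=\kappa$, $v'\geq\eta$ on $\R_+$, $v'=\eta$ on $[b^*,\infty)$, and $(\cL_{\X}-(r-\mu'(x)))v'(x)=0$ on $(0,b^*)$. To complete the variational inequality I would check that $(\cL_{\X}-(r-\mu'(x)))v'(x)\leq 0$ on $(b^*,\infty)$ as well: differentiating the identity $(\cL_X-r)v(x)=\int_{b^*}^{x}(\cL_{\X}-(r-\mu'(y)))\eta(y)\,dy$ obtained in Step~2 of the proof of Proposition \ref{prop:HJB}, and using the elementary relation $\tfrac{d}{dx}(\cL_X-r)f=(\cL_{\X}-(r-\mu'))f'$ valid for $f\in C^3$, gives $(\cL_{\X}-(r-\mu'(x)))v'(x)=(\cL_{\X}-(r-\mu'(x)))\eta(x)$ there, which is negative because $b^*>0$ in Case (A) and $b^*>\overline{x}$ in Case (B). Thus $v'$ is a $C^1$ function satisfying $\max\{(\cL_{\X}-(r-\mu'(x)))v'(x),\,\eta(x)-v'(x)\}=0$ on $(0,\infty)$, with the Dirichlet datum $v'(0)=\kappa$.

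Next I would prove the two inequalities. For the upper bound, fix $x\geq 0$ and an arbitrary $\widehat{\F}$-stopping time $\tau$, set $\tau_n:=\inf\{t\geq 0:\X_t\geq n\}$, and apply the generalized (Meyer--It\^o) formula to $t\mapsto e^{-\int_0^{t}(r-\mu'(\X_s))ds}v'(\X_t)$ on $[0,\tau\wedge\tau_0\wedge\tau_n]$; this is legitimate since $v'\in C^1$ with $v''$ continuous and piecewise $C^1$, so there is no local-time correction at $b^*$, and the formula reduces to the classical one with the generator acting through the a.e.-defined $v''$ (equivalently, $\X$ spends zero Lebesgue time at $\{b^*\}$). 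The variational inequality makes the discounted process a supermartingale on that interval, so, using $v'(\X_{\tau_0})=v'(0)=\kappa$ on $\{\tau_0\leq\tau\}$ and $v'(\X_\tau)\geq\eta(\X_\tau)$ on $\{\tau_0>\tau\}$, one obtains $v'(x)\geq\EE_x[G_{\tau\wedge\tau_n}]$. Letting $n\uparrow\infty$ — invoking the convention on $\{\tau_0=+\infty\}$, the bound $r-\mu'\geq r_o:=\inf_{y}(r-\mu'(y))>0$ from Assumption \ref{ass:rate}, the growth condition $\eta(x)/\widehat{\psi}(x)\to 0$, and the integrability requirement in Assumption \ref{ass:costs}-(i) — yields $v'(x)\geq\EE_x[G_{\tau}]$ by dominated/monotone convergence, hence $v'(x)$ dominates the supremum in \eqref{vprimeOS}. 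For the reverse inequality and the optimality of $\tau^{\star}:=\inf\{t\geq 0:\X^x_t\geq b^*\}$, I would take $\tau=\tau^{\star}$: on $[0,\tau^{\star}\wedge\tau_0]$ the process stays in $[0,b^*]$, where $(\cL_{\X}-(r-\mu'))v'=0$, so the discounted process is a genuine (local) martingale and all the above inequalities become equalities, giving $v'(x)=\EE_x[G_{\tau^{\star}}]$ after the same localization and limiting argument. This is exactly the representation \eqref{reprv} from the proof of Proposition \ref{prop:HJB}, once one uses $\X_{\tau^{\star}}=b^*$ (path-continuity of $\X$) and $v'(b^*)=\eta(b^*)$. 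Combining the two bounds proves \eqref{vprimeOS} and that $\tau^{\star}$ is optimal.

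I expect the main obstacle to be the integrability bookkeeping in the passage $n\uparrow\infty$ of the upper-bound step: one must control $\EE_x[e^{-\int_0^{\tau_n}(r-\mu'(\X_s))ds}v'(\X_{\tau_n})]$ (and the corresponding contributions on $\{\tau_0=+\infty\}$) uniformly over all stopping times $\tau$, despite $v'$ coinciding with the possibly unbounded $\eta$ at large states; this is where Assumptions \ref{ass:rate} and \ref{ass:costs}-(i), together with the natural-boundary behaviour of $\X$ at $+\infty$, have to be used with care. By contrast, the It\^o application and the verification of the variational inequality for $v'$ are routine.
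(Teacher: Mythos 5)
Your proposal is correct and follows essentially the same route as the paper: identify $V=v$ from Theorem \ref{teo:verify}, record that $V'\in C^1(\R_+)\cap C^2(\R_+\setminus\{b^*\})$ satisfies the variational inequality $\max\{(\cL_{\X}-(r-\mu'))V',\,\eta-V'\}=0$ with $V'(0)=\kappa$, and then run a standard optimal-stopping verification (It\^o on the killed, state-dependently discounted process, supermartingale/martingale dichotomy, localization). The paper only sketches this, invoking \cite{OksendalSulem} and \cite{Oksendal}, whereas you flesh out the details; your derivation of $(\cL_{\X}-(r-\mu'))v'\leq 0$ on $(b^*,\infty)$ by differentiating the Step-2 identity is correct but can be shortcut since $v'=\eta$ there and the sign is exactly the Case (A)/(B) hypothesis.
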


\begin{proof}
We only sketch the proof as its arguments are standard. Since $V$ equals $v$ of \eqref{candidate} by Theorem \ref{teo:verify}, and $v\in C^2(\mathbb{R}_+)$ by construction, $V'$ belongs to $C^1(\R_+)$. Moreover, it is easy to check from \eqref{candidate} that $V\in C^2(\mathbb{R}_+ \setminus b^*)$.
Also,
$$\big(\cL_{\X} - (r-\mu'(x)\big)V'(x) =0 \quad \mbox{and} \quad V'(x) \geq \eta(x) \quad \mbox{on} \quad [0,b^*),$$
$$V'(x) = \eta(x) \quad \mbox{and} \quad \big(\cL_{\X} - (r-\mu'(x)\big)V'(x) \leq  0 \quad \mbox{on}\,\,[b^*, +\infty),$$
and $V'(0) = \kappa$.

Then, for any $\widehat{\F}$-stopping time $\tau$, an application of It\^o's lemma (through a mollification argument, see e.g.\ Theorem 2.2.1 in \cite{OksendalSulem}) to the process $(e^{-\int_0^{t \wedge \tau_0} (r - \mu'(\X_s))ds}V'(\X^x_{t \wedge \tau_0}))_{t\geq0}$ on the time interval $[0,\tau]$, together with a standard verification argument (see, e.g., Theorem 10.4.1 in \cite{Oksendal}), imply that $V'$ admits the probabilistic representation \eqref{vprimeOS}, and that $\tau^{\star}:=\inf\{t\geq 0: \X^x_t \geq b^*\}$ is optimal.
\end{proof}

\noindent The previous result is consistent with the findings of \cite{Baldursson}, \cite{ELKK} and \cite{KS85} where, for a state process given by a Brownian motion, it is established the connection between reflected follower singular stochastic control problems and optimal stopping problems with absorption at zero.


\subsection{Cases (A) and (B) under the requirement $\kappa=\eta(0)$.}
\label{sec:caseA2}

Within this section we will always assume that $\kappa=\eta(0)$. We start by solving problem \eqref{eq:value} supposing that $\eta$ satisfies the condition of Case (B).
\begin{theorem}
\label{thm:CaseBkappaequal}
Suppose that Case (B) holds. Then Theorem \ref{teo:verify} still holds.
\end{theorem}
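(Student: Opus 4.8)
The plan is to reduce the case $\kappa=\eta(0)$ under Case (B) to the already-solved case $\kappa>\eta(0)$ by a limiting argument in $\kappa$, exploiting the fact that in Case (B) the free boundary $b^*$ stays bounded away from $\overline{x}$ uniformly as $\kappa\downarrow\eta(0)$. First I would revisit equation \eqref{eqb3}, or equivalently the function $\Phi$ from the proof of Proposition \ref{prop:b}, but now viewing the solution $b^*=b^*(\kappa)$ as a function of the parameter $\kappa$. Since $\Phi(\overline{x})<0$ strictly (this uses only that $\eta(0)-\kappa\le 0$, $h<0$, and $(\cL_{\X}-(r-\mu'))\eta>0$ on $(0,\overline{x})$, hence holds even when $\kappa=\eta(0)$), and since $\Phi$ is strictly increasing on $(\overline{x},\infty)$ with $\Phi(b)\uparrow\infty$, there is still a unique root $b^*>\overline{x}$ when $\kappa=\eta(0)$; call it $b^*_0$. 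Moreover $\kappa\mapsto b^*(\kappa)$ is continuous and $b^*(\kappa)\downarrow b^*_0$ as $\kappa\downarrow\eta(0)$, by monotonicity of $\Phi$ in $\kappa$.

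Next I would define $v_0$ by \eqref{candidate} with $b^*_0$ in place of $b^*$ and $\alpha,\beta$ given by \eqref{AB}. The key observation is that the entire proof of Proposition \ref{prop:HJB} goes through verbatim with $\kappa=\eta(0)$ \emph{except} for Step 3, where the argument used $\vartheta(y_o)=\kappa>\widetilde{\eta}(y_o)=\eta(0)$ strictly to get $\vartheta\ge\widetilde{\eta}$ on $[y_o,y_*]$. Here, however, we are in Case (B), so by Lemma \ref{Daya} the function $\widetilde{\eta}$ is strictly convex on $[y_o,\overline{y}]$ and strictly concave on $[\overline{y},y_*]$, while $\vartheta$ is the straight line through $(y_o,\kappa)=(y_o,\widetilde{\eta}(y_o))$ that is tangent to $\widetilde{\eta}$ at $(y_*,\widetilde{\eta}(y_*))$ with $y_*>\overline{y}$. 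I would argue geometrically: on $[\overline{y},y_*]$ concavity of $\widetilde{\eta}$ together with the tangency at $y_*$ forces $\vartheta\ge\widetilde{\eta}$; on $[y_o,\overline{y}]$ convexity of $\widetilde{\eta}$ together with $\vartheta(y_o)=\widetilde{\eta}(y_o)$ and $\vartheta(\overline{y})\ge\widetilde{\eta}(\overline{y})$ (from the previous interval) forces $\vartheta\ge\widetilde{\eta}$ there as well — a convex function lies below the chord joining two of its points. Hence $v_0'(x)\ge\eta(x)$ on $[0,b^*_0]$, so $v_0$ solves the HJB equation \eqref{HJB} with Neumann condition $v_0'(0)=\eta(0)=\kappa$, just as before. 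Alternatively, and perhaps more cleanly, one passes to the limit: $v_0=\lim_{\kappa\downarrow\eta(0)}v_\kappa$ pointwise with $C^1$ convergence on compacts, each $v_\kappa$ satisfies the variational inequality, and the inequalities are preserved in the limit.

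With $v_0$ in hand satisfying \eqref{HJB} and $v_0'(0)=\kappa$, the verification argument of Theorem \ref{teo:verify} applies word for word: Step 1 uses only that $v_0\in C^2(\R_+)$ solves \eqref{HJB}, together with the transversality Lemma \ref{lem:trasvers} and admissibility, to give $v_0\ge V$; Step 2 uses that the double-reflection control $(D^*,L^*)$ solving $\textbf{SP}(0,b^*_0;x)$ keeps $X^{D^*}$ in $[0,b^*_0]$, turning all inequalities into equalities and yielding $v_0(x)=\mathcal{J}_x(D^*)\le V(x)$. The admissibility of $D^*$ is established exactly as in the preceding proposition since it does not use $\kappa>\eta(0)$ anywhere. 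Therefore $v_0=V$ and $D^*$ is optimal, which is the assertion that "Theorem \ref{teo:verify} still holds." The only real subtlety is Step 3 of the HJB verification — checking $v_0'\ge\eta$ at the boundary point $y_o$ where the inequality is now an equality — and I expect that to be the main obstacle; it is handled by the convexity/concavity geometry of $\widetilde{\eta}$ described above (equivalently, by the limiting argument, whose only nontrivial input is the continuity $b^*(\kappa)\to b^*_0$ and the locally uniform $C^1$ convergence $v_\kappa\to v_0$).
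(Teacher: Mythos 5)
Your proposal is correct and follows essentially the same route as the paper: re-run Propositions \ref{prop:b} and \ref{prop:HJB} with $\kappa=\eta(0)$, noting that $\Phi(\overline{x})<0$ still holds (because $\eta(0)-\kappa=0$ and the strictly positive integrand term is negated by $h<0$), and that the Step-3 convexity/concavity geometry of $\widetilde{\eta}$ still yields $\vartheta\ge\widetilde{\eta}$ on $[y_o,y_*]$ even when $\vartheta(y_o)=\widetilde{\eta}(y_o)$ holds with equality. The paper states this tersely; you spell out the geometric argument explicitly, which is a useful elaboration but not a different approach.
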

\begin{proof}
It suffices to notice that under our assumptions Proposition \ref{prop:b} and Proposition \ref{prop:HJB} still hold. Indeed, if $\kappa=\eta(0)$ one can show by following exactly the same arguments employed in the proof of Proposition \ref{prop:b} that there exists a unique $b^*>\overline{x}$ solving equation \eqref{eqb3}. Moreover, constructing the candidate value function $v$ as in \eqref{candidate}, the proof of Proposition \ref{prop:HJB} still works under the requirement $\kappa=\eta(0)$. In particular, one can still prove that $v'\geq \eta$ on $\mathbb{R}_+$ by following the same rational adopted in \emph{Step 3} of the proof of Proposition \ref{prop:HJB}.
\end{proof}

The next theorem provides the solution to \eqref{eq:value} in Case (A). 
\begin{theorem}
\label{thm:CaseAkappaequal}
Suppose that Case (A) holds. Then the function
\begin{equation}
\label{def:candidatekappaequal}
v(x):= \int_0^x\eta(y)dy + \frac{1}{r}\Big(\frac{1}{2}\sigma^2(0)\eta'(0) + \mu(0)\eta(0)\Big), \qquad x \geq 0,
\end{equation}
is such that $v=V$ on $\mathbb{R}_+$. 

Moreover, for $\delta>0$ let $b^*_{\delta}>0$ be the unique solution to \eqref{eqb3} when $\kappa=\eta(0)+\delta$ (cf.\ Proposition \ref{prop:b}), and denote by $D^{*,\delta}$ the associated optimal control process that makes $X^{D^{*,\delta}}$ reflected at $b^*_{\delta}$ (cf.\ Theorem \ref{teo:verify}). Then one has that the sequence of admissible controls $(D^{*,\delta})_{\delta>0}$ is maximizing; that is, for any $x\geq 0$
\begin{equation}
\label{minimizing}
\lim_{\delta \downarrow 0} \E_x\bigg[\int_0^{\infty}e^{-rt} \eta(X^{D^{*,\delta}}_t) \circ dD^{*,\delta}_t - \eta(0) \int_0^{\infty}e^{-rt}dL^{D^{*,\delta}}_t\bigg] = V(x).
\end{equation} 
\end{theorem}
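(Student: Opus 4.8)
The plan is to split the statement into two parts: (i) showing that $v$ of \eqref{def:candidatekappaequal} satisfies $v=V$, and (ii) showing that the sequence $(D^{*,\delta})_{\delta>0}$ is maximizing. For part (i), I would first verify directly that $v$ is a classical solution of the HJB equation \eqref{HJB} with the Neumann condition $v'(0)=\eta(0)=\kappa$. By construction $v'(x)=\eta(x)$ on all of $\R_+$, so the gradient constraint $v'\geq\eta$ holds with equality and $v'(0)=\eta(0)=\kappa$ is immediate. It then remains to check $(\cL_X-r)v(x)\leq 0$ for all $x\geq 0$. Exactly as in \emph{Step 2} of the proof of Proposition \ref{prop:HJB}, integrating $(\cL_{\X}-(r-\mu'(y)))\eta(y)$ from $0$ to $x$ and using the identity from integration by parts, one gets
\begin{align*}
\int_0^x \big(\cL_{\X}-(r-\mu'(y))\big)\eta(y)\,dy &= \tfrac12\sigma^2(x)\eta'(x)+\mu(x)\eta(x) \\
&\quad - r\bigg[\int_0^x\eta(y)\,dy + \tfrac1r\Big(\tfrac12\sigma^2(0)\eta'(0)+\mu(0)\eta(0)\Big)\bigg] = (\cL_X-r)v(x),
\end{align*}
and the left-hand side is $\leq 0$ because Case (A) holds. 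Hence $v$ solves the HJB equation. One then runs the verification argument of \emph{Step 1} of the proof of Theorem \ref{teo:verify} verbatim — applying It\^o--Meyer to $e^{-rt}v(X^D_t)$, using the HJB inequalities, the admissibility bound \eqref{admissibility}, and the transversality Lemma \ref{lem:trasvers} — to conclude $v\geq V$ on $\R_+$. (Here one should check that the growth/integrability hypotheses of Lemma \ref{lem:trasvers} still apply to this $v$, which follows from $v'=\eta$ and Assumption \ref{ass:costs}.)

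The reverse inequality $v\leq V$ is where the $\varepsilon$-optimal sequence enters, so I would establish part (ii) and the bound $v\leq V$ together. Fix $x\geq 0$ and $\delta>0$, let $b^*_\delta$ be the solution of \eqref{eqb3} with $\kappa$ replaced by $\eta(0)+\delta$, and let $v_\delta$ be the corresponding value function from \eqref{candidate}, so that by Theorem \ref{teo:verify}, $v_\delta(x)=\mathcal J_x^{(\delta)}(D^{*,\delta})$ where $\mathcal J^{(\delta)}$ uses penalty $\kappa_\delta=\eta(0)+\delta$. The key observation is that the performance functional with the \emph{true} penalty $\eta(0)$ differs from $\mathcal J^{(\delta)}$ only in the reflection term:
\[
\mathcal J_x(D^{*,\delta}) = v_\delta(x) + \delta\,\E_x\bigg[\int_0^\infty e^{-rt}\,dL^{D^{*,\delta}}_t\bigg].
\]
Since $V(x)\geq \mathcal J_x(D^{*,\delta})\geq v_\delta(x)$ for every $\delta>0$, it suffices to prove that $v_\delta(x)\to v(x)$ and that $\delta\,\E_x[\int_0^\infty e^{-rt}dL^{D^{*,\delta}}_t]\to 0$ as $\delta\downarrow 0$. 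The convergence $v_\delta\to v$ should follow by monotone/continuous dependence of the free boundary: from \eqref{eqb3} one sees that $\kappa_\delta\downarrow\eta(0)$ forces $b^*_\delta\downarrow 0$ (monotonicity of $\Phi$ in the proof of Proposition \ref{prop:b}), and then $\alpha,\beta$ in \eqref{AB} and the explicit form of \eqref{candidate} converge to give $v_\delta\to v$ pointwise (indeed uniformly on compacts), since as $b^*_\delta\to 0$ the representation $v_\delta(x)=\int_{b^*_\delta}^x\eta(y)dy+v_\delta(b^*_\delta)$ with $v_\delta(b^*_\delta)=\tfrac1r(\tfrac12\sigma^2(b^*_\delta)\eta'(b^*_\delta)+\mu(b^*_\delta)\eta(b^*_\delta))$ (by \eqref{vb}) converges to \eqref{def:candidatekappaequal}.

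The main obstacle is controlling $\delta\,\E_x[\int_0^\infty e^{-rt}dL^{D^{*,\delta}}_t]$: one needs that this expected discounted local time does not blow up faster than $1/\delta$ as the reflection band $[0,b^*_\delta]$ shrinks to a point. I would obtain a bound uniform in small $\delta$ by the argument used for \eqref{inetgral2} (the reasoning of Lemma 2.1 in \cite{Shreveetal}, eq.\ (2.16)--(2.17)): apply Dynkin's formula to an auxiliary function on the fixed interval $[0,b^*_{\delta_0}]$ for some $\delta_0>0$, noting that for $\delta<\delta_0$ the process $X^{D^{*,\delta}}$ stays in $[0,b^*_{\delta_0}]$, so the same test function yields a bound $\E_x[\int_0^\infty e^{-rt}dL^{D^{*,\delta}}_t]\leq C$ with $C$ independent of $\delta\in(0,\delta_0)$. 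Then $\delta\cdot C\to 0$, which simultaneously gives $\mathcal J_x(D^{*,\delta})\to v(x)$, hence \eqref{minimizing}, and combined with $V\geq\mathcal J_x(D^{*,\delta})\geq v_\delta(x)\to v(x)$ yields $v(x)\leq V(x)$. Together with $v\geq V$ from part (i) this proves $v=V$, and \eqref{minimizing} follows. A minor technical point to handle carefully is the admissibility of each $D^{*,\delta}$ and the uniform integrability needed to pass the limit through $\mathcal J_x(\cdot)$, both of which reduce to the local-time bound above and the continuity of $\eta$.
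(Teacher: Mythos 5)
Your overall strategy tracks the paper's: Step~1 (verify that $v$ solves \eqref{HJB} with $v'(0)=\eta(0)=\kappa$, then run the verification argument of Theorem~\ref{teo:verify}, Step~1, to get $v\geq V$) matches exactly, and your identification of the chain $V(x)\geq \mathcal J_x(D^{*,\delta})\geq v_\delta(x)$ together with $v_\delta(x)\to v(x)$ (via $b^*_\delta\downarrow 0$, the representation $v_\delta(x)=\int_{b^*_\delta}^x\eta + v_\delta(b^*_\delta)$, and \eqref{vb}) is precisely what the paper does. However, you then declare a ``main obstacle'' that does not exist, and your proposed fix for it is wrong.

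First, the claimed uniform bound $\E_x\big[\int_0^\infty e^{-rt}dL^{D^{*,\delta}}_t\big]\leq C$ with $C$ independent of $\delta$ is false. As $\delta\downarrow 0$ the reflection band $[0,b^*_\delta]$ collapses to a point, and the expected discounted local time at $0$ blows up: for reflected Brownian motion on $[0,b]$ one computes directly (using the analogue of Lemma~2.1 of \cite{Shreveetal}, with $g'(0)=-1$, $g'(b)=0$) that $\E_0\big[\int_0^\infty e^{-rt}dL_t\big]\sim 1/(2rb)$ as $b\downarrow 0$. Your argument of fixing a single test function on $[0,b^*_{\delta_0}]$ does not produce a $\delta$-uniform bound, because the relevant test function solves a Neumann problem whose boundary conditions move with $\delta$; freezing the domain destroys the identity that produces the bound. (One can still check $\delta\,\E_x[\int e^{-rt}dL^{*,\delta}_t]\to 0$, but it requires tracking the rate $b^*_\delta\sim c\sqrt{\delta}$ coming from the quadratic vanishing of the integral in \eqref{eqb3} near $0$; this is more delicate than you suggest.)

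Second, and more to the point, you never needed any of this. You already have
\begin{equation*}
v_\delta(x)\;\leq\; \mathcal J_x(D^{*,\delta})\;\leq\; V(x),
\end{equation*}
where the left inequality is simply because dropping the nonnegative term $\delta\int_0^\infty e^{-rt}dL^{*,\delta}_t$ increases the payoff, and the right one is admissibility of $D^{*,\delta}$ in $\mathcal A(x)$ (which is independent of $\kappa$). Letting $\delta\downarrow 0$ and using $v_\delta(x)\to v(x)$ gives $v(x)\leq V(x)$; combined with $v\geq V$ from Step~1 this forces $v=V$, and the same sandwich then yields $\mathcal J_x(D^{*,\delta})\to V(x)$, i.e.\ \eqref{minimizing}. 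No estimate on the local time is required. This is exactly the logic of the paper's Step~2, cast there as the chain
\begin{equation*}
\int_{b^*_\delta}^x\eta(y)\,dy + V_\delta(b^*_\delta) = \E_x\bigg[\int_0^\infty e^{-rs}\eta(X^{D^{*,\delta}}_s)\circ dD^{*,\delta}_s - (\eta(0)+\delta)\int_0^\infty e^{-rs}dL^{*,\delta}_s\bigg]\leq V(x),
\end{equation*}
followed by $\delta\downarrow 0$. If you simply delete the local-time discussion and replace it with the one-line squeeze, your proof becomes correct and essentially identical to the paper's.
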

\begin{proof}
\emph{Step 1.} An integration by parts reveals that for any $x\geq0$
\begin{align*}
& 0 \geq \int_{0}^x \big(\cL_{\X} - (r-\mu'(y))\big)\eta(y) dy \nonumber \\ 
&= \frac{1}{2}\sigma^2(x)\eta'(x) + \mu(x)\eta(x) - r\bigg[\int_{0}^x \eta(y) dy + \frac{1}{r}\Big(\frac{1}{2}\sigma^2(0)\eta'(0) + \mu(0)\eta(0)\Big)\bigg]  \nonumber \\
& = \big(\cL_X - r\big)v(x). \nonumber
\end{align*}
The latter, together with the fact that $v'(x)=\eta(x)$ and $v'(0)=\eta(0)=\kappa$, show that $v$ solves \eqref{HJB} with its associated Neumann boundary condition at zero. Therefore, by proceeding as in \emph{Step 1} of the proof of Theorem \ref{teo:verify} one has that $v\geq V$ on $\mathbb{R}_+$.
\vspace{0.25cm}

\emph{Step 2.} For a given $\delta>0$, let $b^*_{\delta}>0$ be the unique solution to \eqref{eqb3} when $\kappa=\eta(0)+\delta$ (see Proposition \ref{prop:b}). Denoting by 
$$\Theta(b;\delta):= -\delta + \frac{1}{w}\widehat{\phi}(0)\widehat{\psi}(0)\int_0^b \widehat{m}'(y)h(y)\big(\cL_{\X} - (r-\mu'(y))\big)\eta(y) dy,$$
equation \eqref{eqb3} rewrites as $\Theta(b;\delta)=0$, and a simple application of the implicit function theorem yields
$$\frac{\partial}{\partial \delta}b^*_{\delta} = \frac{1}{\frac{\partial}{\partial b}\Theta(b^*_{\delta};\delta)}>0,$$
where the last inequality is due to the fact that 
$$\frac{\partial}{\partial b}\Theta(b;\delta)=\frac{1}{w} \widehat{\phi}(0)\widehat{\psi}(0)\widehat{m}'(b)h(b)\big(\cL_{\X} - (r-\mu'(b))\big)\eta(b)>0$$
because $(\cL_{\X} - (r-\mu'(b)))\eta(b) <0$ for any $b> 0$ and $h(b) < 0$ for all $b>0$.
Then $b^*_0:=\lim_{\delta \downarrow 0}b^*_{\delta}$ exists by monotonicity, and it is not hard to be convinced that $b^*_0=0$.

Recalling \eqref{def:candidatekappaequal}, for any $\delta>0$ and $x\geq b^*_{\delta}$ we can write 
\begin{align}
\label{ineq2kappaequal}
& v(x) - \int_0^{b^*_\delta}\eta(y) dy + \frac{1}{r}\Big(\frac{1}{2}\sigma^2(b^*_{\delta})\eta'(b^*_{\delta}) + \mu(b^*_{\delta})\eta(b^*_{\delta})\Big) - \frac{1}{r}\Big(\frac{1}{2}\sigma^2(0)\eta'(0) + \mu(0)\eta(0)\Big) \nonumber \\
& = \int_{b^*_{\delta}}^x\eta(y)dy + \frac{1}{r}\Big(\frac{1}{2}\sigma^2(b^*_{\delta})\eta'(b^*_{\delta}) + \mu(b^*_{\delta})\eta(b^*_{\delta})\Big)  \\
& = \int_{b^*_{\delta}}^x\eta(y)dy + V_{\delta}(b^*_{\delta}) \leq V(x) \nonumber,
\end{align}
where $V_{\delta}$ denotes the value function \eqref{eq:value} when $\kappa=\eta(0)+\delta$, and $V$ the value function \eqref{eq:value} when $\kappa=\eta(0)$.
The second inequality in \eqref{ineq2kappaequal} is due to \eqref{vb} combined with Theorem \ref{teo:verify}, whereas the third one follows by noticing that
\begin{align*}
& \int_{b^*_{\delta}}^x\eta(y)dy + V_{\delta}(b^*_{\delta}) = \E_x\bigg[\int_{0}^{\infty} e^{-rs}\eta(X^{D^{*,\delta}}_s) \circ dD^{*,\delta}_s - \big(\eta(0)+\delta\big) \int_{0}^{\infty} e^{-rs}dL^{*,\delta}_s\bigg] \nonumber \\
& \leq \E_x\bigg[\int_{0}^{\infty} e^{-rs}\eta(X^{D^{*,\delta}}_s) \circ dD^{*,\delta}_s - \eta(0) \int_{0}^{\infty} e^{-rs}dL^{*,\delta}_s\bigg] \leq V(x).
\end{align*}
Then, taking limits as $\delta \downarrow 0$ in \eqref{ineq2kappaequal} and using that $\lim_{\delta \downarrow 0}b^*_{\delta} =0$ we find $v(x) \leq V(x)$ for any $x\geq0$.
\vspace{0.25cm}

\emph{Step 3.} Combining the results of \emph{Step 1} and \emph{Step 2} we conclude that $v(x) = V(x)$ and that $(D^{*,\delta})_{\delta>0}$ is a maximizing sequence.
\end{proof}

Informally, the previous result shows that if $\kappa=\eta(0)$ and Case (A) holds, then it is optimal to keep the state process at $0$ and to exert control whenever the state process attempts to become strictly positive.


\subsection{Case (C)}
\label{sec:caseC}

We now assume that Case (C) holds (i.e.\ $(\cL_{\X} - (r - \mu'(x)))\eta(x) \geq 0$ for all $x \geq 0$), and we guess that it is optimal not exerting control at all. This conjecture leads to the candidate optimal control $D^* \equiv 0$ and to the candidate value function
\begin{equation}
\label{eq:candidateB}
v(x) = - \kappa\, \E_x\bigg[\int_0^{\infty} e^{-rt} dL^0_t\bigg], \qquad x \geq 0,
\end{equation}
where $L^0 \in \mathcal{S}$ makes the process $X^0$ reflected at zero. Notice that under Assumption \ref{ass:coef} there indeed exists a (pathwise) unique solution $(X^0,L^0)$ to the Skorokhod reflection problem at zero (cf.\ \cite{Tanaka}, Theorem 4.1). 

\begin{theorem}
\label{thm:verificocaseB}
One has that $v=V$ on $\R_+$ and that $D^* \equiv 0$ is optimal.
\end{theorem}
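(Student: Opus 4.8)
The plan is to run a verification argument mirroring Theorem~\ref{teo:verify}, but adapted to the candidate value function \eqref{eq:candidateB}. First I would establish regularity and the HJB inequalities for $v$. Since $D^*\equiv 0$, the candidate $v(x)=-\kappa\,\E_x[\int_0^\infty e^{-rt}dL^0_t]$ should satisfy $(\cL_X-r)v(x)=0$ on $(0,\infty)$ together with the Neumann condition $v'(0)=\kappa$; indeed $v$ is (up to sign and normalization) the resolvent of the reflected diffusion applied to the zero running cost with a boundary source at $0$, so by standard theory for one-dimensional reflected diffusions (e.g.\ via the decomposition into $\psi,\phi$ and the reflection at zero) $v\in C^2(\R_+)$ and solves that linear ODE with the stated boundary behavior. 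The remaining, and crucial, HJB requirement is $v'(x)\ge \eta(x)$ for all $x\ge 0$; once this is in hand, $v$ is a classical supersolution of \eqref{HJB} with $v'(0)=\kappa$, and the argument of \emph{Step 1} of the proof of Theorem~\ref{teo:verify} gives $v\ge V$, while evaluating at $D\equiv 0$ (for which all inequalities become equalities, using that $X^0$ need not stay bounded, so one still needs the transversality Lemma~\ref{lem:trasvers}) gives $v\le V$, hence $v=V$ and $D^*\equiv 0$ optimal.

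The heart of the proof is therefore showing $v'\ge\eta$ on $\R_+$ under Case (C). Here I would use the same change-of-scale device as in \emph{Step 3} of Proposition~\ref{prop:HJB}. Differentiating $(\cL_X-r)v=0$ gives $(\cL_{\X}-(r-\mu'(x)))v'(x)=0$ on $(0,\infty)$, and the boundary conditions are now $v'(0)=\kappa=\eta(0)$ together with the growth/behavior of $v'$ at $+\infty$ dictated by the admissibility-type estimates (one expects $v'(x)/\widehat\psi(x)\to 0$, consistent with Assumption~\ref{ass:costs}-(i)). Solving this linear ODE, $v'$ must be the multiple of $\widehat\phi$ satisfying $v'(0)=\eta(0)$, i.e.\ $v'(x)=\eta(0)\widehat\phi(x)/\widehat\phi(0)$. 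Passing to the variable $y=\FF(x)=\widehat\psi(x)/\widehat\phi(x)$ and writing $\widetilde{g}(y)=((g/\widehat\phi)\circ\FF^{-1})(y)$, the function $\widetilde{v'}$ becomes the constant $\eta(0)/\widehat\phi(0)=\widetilde\eta(y_o)$ (a horizontal line through the left endpoint). By Lemma~\ref{Daya}, Case (C) — $(\cL_{\X}-(r-\mu'))\eta\ge 0$ everywhere — is equivalent to $\widetilde\eta$ being convex on $[y_o,\infty)$. A convex function lying below its value at the left endpoint at that endpoint, together with the behavior at $+\infty$ forcing $\widetilde\eta(y)\le\widetilde{v'}(y)$ in the limit (this is where the hypothesis $\lim_{x\uparrow\infty}\eta(x)/\widehat\psi(x)=0$, i.e.\ $\widetilde\eta(y)\to 0$ as $y\to\infty$ if $\widehat\phi(\infty)$-type limits behave, enters), yields $\widetilde\eta(y)\le\widetilde{v'}(y)=\eta(0)/\widehat\phi(0)$ for all $y\ge y_o$; undoing the change of scale gives $\eta(x)\le v'(x)$ on $\R_+$, as desired. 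One subtlety: convexity plus a single upper bound at one endpoint is not automatically enough — a convex function can exceed that value further out — so I must combine it with the asymptotic information at $+\infty$; the cleanest route is to note that a convex function that is bounded above on $[y_o,\infty)$ and equals its bound-related quantity appropriately must be nonincreasing and hence stay below, or more directly, since $\widetilde\eta$ is convex and $\widetilde\eta(y)\to 0$ while $\widetilde{v'}$ is a positive constant line, $\widetilde\eta$ cannot cross above a horizontal line through its endpoint value without violating convexity given the limit. I would spell this elementary-but-delicate comparison out carefully.

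The second ingredient is identifying $v$ of \eqref{eq:candidateB} with the resolvent expression and checking $v\in C^2(\R_+)$ with $(\cL_X-r)v=0$, $v'(0)=\kappa$. This is classical (the Green's function of a one-dimensional diffusion reflected at $0$), and I would cite the scale/speed representation from Chapter~2 of \cite{BS}: $v(x)=-\kappa\,\int_{[0,\infty)}G_r(x,z)\,m(dz)$-type formulas reduce to $v(x)= -\kappa\,\frac{\psi(x)}{\cdots}$ on a neighborhood of $0$, but more simply, one verifies directly that the function $-\kappa\,\E_x[\int_0^\infty e^{-rt}dL^0_t]$ solves the stated ODE by applying Dynkin/It\^o's formula to $e^{-rt}w(X^0_t)$ for $w$ the (unique up to the boundary condition) solution of $(\cL_X-r)w=0$, $w'(0)=\kappa$, which by Assumption~\ref{ass:psiprimephiprime} and the boundary behavior at $+\infty$ is a bounded multiple of $\psi$; the local time term produces exactly $w'(0)=\kappa$. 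Finiteness of $\E_x[\int_0^\infty e^{-rt}dL^0_t]$ is exactly \eqref{inetgral2} in the admissibility proof.

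The main obstacle I anticipate is the comparison $\widetilde\eta\le\widetilde{v'}$ on the unbounded interval $[y_o,\infty)$: convexity of $\widetilde\eta$ (from Lemma~\ref{Daya}) plus equality at the left endpoint is not by itself sufficient, and one genuinely needs the integrability/decay hypothesis in Assumption~\ref{ass:costs}-(i) to control the behavior at $+\infty$ (equivalently, to rule out $\widetilde\eta$ eventually rising above the constant line). All other steps — regularity of $v$, the It\^o--Meyer computation, the transversality limit via Lemma~\ref{lem:trasvers}, and the reverse inequality from $D\equiv 0$ — are routine adaptations of arguments already in the paper. Finally, one should note the stated corollary that $V<0$ here is immediate since $v=-\kappa\,\E_x[\int_0^\infty e^{-rt}dL^0_t]\le 0$, with strict negativity whenever $X^0$ actually reaches $0$ with positive probability, which holds because $X$ is regular and $0$ is in the interior of the (pre-reflection) state space $\R$.
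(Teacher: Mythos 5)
Your overall structure (verification inequality $v\ge V$, trivially $v\le V$ by evaluating at $D\equiv 0$) matches the paper. The interesting divergence, and the place where you need to be careful, is the key inequality $v'\ge\eta$.

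The paper proves $v'\ge\eta$ by a short and purely probabilistic argument: it identifies $v'(x)=\kappa\,\widehat{\phi}(x)/\widehat{\phi}(0)=\kappa\,\widehat{\E}_x[e^{-\int_0^{\tau_0}(r-\mu'(\X_s))ds}]$, then uses the Dynkin representation \eqref{repr-eta} of $\eta$ and the sign condition of Case (C) to conclude $\eta(x)\le\eta(0)\,\widehat{\E}_x[e^{-\int_0^{\tau_0}(r-\mu'(\X_s))ds}]$, whence $v'-\eta\ge(\kappa-\eta(0))\,\widehat{\E}_x[\dots]\ge 0$. You instead run the Dayanik--Karatzas change of scale $y=\FF(x)$ and reduce to a convexity comparison. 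That route can also be made to work, but as written it has two inaccuracies. First, in Case (C) the Neumann condition is $v'(0)=\kappa\ge\eta(0)$, not $\kappa=\eta(0)$; you repeatedly replace $\kappa$ by $\eta(0)$, so your constant line should be $\widetilde{v'}\equiv\kappa/\widehat{\phi}(0)$, which is $\ge\widetilde{\eta}(y_o)$ rather than equal to it (this only strengthens the inequality, but the identification $v'(x)=\eta(0)\widehat{\phi}(x)/\widehat{\phi}(0)$ is wrong). Second, and more substantively, the growth hypothesis $\lim_{x\uparrow\infty}\eta(x)/\widehat{\psi}(x)=0$ does \emph{not} translate into $\widetilde{\eta}(y)\to 0$: since $\widetilde{\eta}(y)=\eta(x)/\widehat{\phi}(x)=\big(\eta(x)/\widehat{\psi}(x)\big)\,y$, it translates into $\widetilde{\eta}(y)=o(y)$ as $y\to\infty$. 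Your own hedging ("if $\widehat{\phi}(\infty)$-type limits behave") signals that this step was not actually nailed down. Fortunately $o(y)$ is exactly what you need: a convex function with sublinear growth on $[y_o,\infty)$ must be nonincreasing (otherwise it eventually grows at least linearly), and then $\widetilde{\eta}(y)\le\widetilde{\eta}(y_o)\le\widetilde{v'}(y_o)=\widetilde{v'}(y)$ for all $y\ge y_o$. With these two corrections your alternative derivation of $v'\ge\eta$ is sound, but note that the paper's direct use of \eqref{repr-eta} avoids the whole change-of-scale machinery. One further small point: Lemma~\ref{lem:trasvers} is stated for the $v$ of \eqref{candidate}; here $v(x)=\kappa\phi(x)/\phi'(0)$ is bounded on $\R_+$, so the transversality limit $\E_x[e^{-r\tau_n}v(X^D_{\tau_n})]\to 0$ is immediate and does not require a separate lemma.
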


\begin{proof}
The proof is organized in two steps. We first show that $v$ of \eqref{eq:candidateB} is a classical solution of the HJB equation \eqref{HJB} and it is such that $v'(0)=\kappa$. Then, we prove that $v=V$ on $\R_+$ and that $D^*\equiv 0$ is optimal.
\vspace{0.25cm}

\emph{Step 1.} By Lemma 2.1 and Corollary 2.2 in \cite{Shreveetal} we have that $v$ of \eqref{eq:candidateB} solves the ODE 
\begin{equation}
\label{ODEvB}
\big(\cL_X - r\big)v(x)=0, \qquad x \geq 0,
\end{equation}
and satisfies the boundary conditions $v'(0)=\kappa$ and $\lim_{x \uparrow \infty}v(x)=0$. Hence $v(x) = \kappa \phi(x)/\phi'(0)$, where the fact that $+\infty$ is natural for $X$, and therefore $\lim_{x \uparrow \infty}\psi(x) = \infty$, has been used.

It thus remains to show that $v'(x) \geq \eta(x)$ to conclude that $v$ solves \eqref{HJB}. To this end, we notice that $v'(x) = \kappa \phi'(x)/\phi'(0) = \kappa \widehat{\phi}(x)/\widehat{\phi}(0)$, and therefore it
admits the probabilistic representation
\begin{equation}
\label{vBprimerepr}
v'(x) = \kappa\, \widehat{\E}_x\Big[e^{-\int_0^{\tau_0} (r - \mu'(\X_s))ds}\Big], \qquad x \geq 0,
\end{equation}
where again $\tau_0=\inf\{t\geq 0: \X^x_t \leq 0\}$, $x \geq 0$, and we have used that $\lim_{x \uparrow \infty} \widehat{\phi}(x)=0$ since $+\infty$ is natural for $\X$.

On the other hand, by \eqref{repr-eta} we can also write 
\begin{align}
\label{eta-repr}
& \eta(x) = \widehat{\E}_x\bigg[e^{-\int_0^{\tau_0} (r - \mu'(\X_s))ds}\eta(0) - \int_0^{\tau_0}e^{-\int_0^s (r - \mu'(\X_u))du}\big(\cL_{\X} - (r-\mu'(\X_s))\big)\eta(\X_s) ds\bigg] \\
& \leq  \eta(0)\,\widehat{\E}_x\Big[e^{-\int_0^{\tau_0} (r - \mu'(\X_s))ds}\Big], \nonumber
\end{align}
where the last inequality is due to the fact that $(\cL_{\X} - (r-\mu'(x)))\eta(x) \geq 0$ for all $x \geq 0$. Hence, combining \eqref{vBprimerepr} and \eqref{eta-repr} we find that for any $x\geq 0$
$$v'(x) - \eta(x) \geq \big(\kappa - \eta(0)\big)\, \widehat{\E}_x\Big[e^{-\int_0^{\tau_0} (r - \mu'(\X_s))ds}\Big] \geq 0,$$
since $\kappa \geq \eta(0)$ by assumption, and $\widehat{\P}_x(\tau_0< \infty)>0$ for any $x \in \mathbb{R}$ by regularity of $\X$.

We therefore conclude that $v$ is a classical solution to \eqref{HJB} and it is such that $v'(0)=\kappa$.
\vspace{0.25cm}

\emph{Step 2.} Fix $x\geq 0$ and notice that $D^*\equiv 0$ is such that $D^*\in \mathcal{A}(x)$. Also, since $v$ solves \eqref{HJB} and it is such that $v'(0)=\kappa$, by repeating the arguments employed in \emph{Step 1} of the proof of Theorem \ref{teo:verify} one can show that $v \geq V$ on $\R_+$. However, by definition $v(x) = \mathcal{J}_x(0) \leq V(x)$, and therefore we conclude that $v=V$ on $\R_+$ and that $D^*\equiv 0$ is optimal.
\end{proof}


\section{A Case Study with Mean-Reverting Dynamics: Sensitivity Analysis}
\label{sec:CS}

In this section we take $\eta(x)=\eta_o>0$ for all $x\geq0$ and 
(cf.\ \eqref{state:X}) 
\begin{equation}
\label{state:X-OU}
dX^{D}_t=(\mu - \theta X^D_t)dt+\sigma dB_t+dL^D_t-dD_t,\qquad X^{D}_{0}=x\geq 0,
\end{equation}
for some $\theta > 0$, $\mu \in \R$ and $\sigma>0$. Hence (cf.\ \eqref{state:X1})
\begin{equation}
\label{state:X1-OU}
dX_t=(\mu - \theta X_t)dt+\sigma dB_t,\qquad X_{0}=x \in \mathbb{R},
\end{equation}
and $\widehat{X}\equiv X$ since $\sigma'\equiv 0$ (cf.\ \eqref{state:XX}).
The parameter $\theta$ is the mean-reversion speed towards the asymptotic mean $\mu/\theta \in \R$. Throughout this section we also assume that $\kappa>\eta_o$.

Under this specification, problem \eqref{eq:value} well models an optimal dividend problem with (compulsory) capital injections (see, e.g., \cite{KulenkoSchmidli}, \cite{LokkaZervos}, \cite{Yang} and Chapter 2.5 in \cite{Schmidli} for a general overview on optimal dividend problems), and with a mean-reverting dynamics for the cash reservoir (see \cite{Cadenillas} for a discussion on empirical and economic motivations for such a kind of dynamics). In this setting $\eta_o$ and $\kappa$ represent the constant transaction/administration costs on dividends and on capital injections.

Because $r-\mu'(x)=r + \theta$, the characteristic equation \eqref{ODE2} reads $\frac{1}{2}\sigma^2 f'' + (\mu-\theta x)f' = (r+\theta)u$, $r > 0$, and it is known that it admits the two linearly independent, positive solutions (cf.~\cite{JYC}, p.\ 280)
\begin{equation}
\label{phi}
\widehat{\phi}(x):=
e^{\frac{\theta(x-\frac{\mu}{\theta})^2}{2\sigma^2}}D_{-\frac{r+\theta}{\theta}}\Big(\frac{(x-\frac{\mu}{\theta})}{\sigma}\sqrt{2\theta}\Big)
\end{equation}
and
\begin{equation}
\label{psi}
\widehat{\psi}(x):=
e^{\frac{\theta(x-\frac{\mu}{\theta})^2}{2\sigma^2}}D_{-\frac{r+\theta}{\theta}}\Big(-\frac{(x-\frac{\mu}{\theta})}{\sigma}\sqrt{2\theta}\Big),
\end{equation}
which are strictly decreasing and strictly increasing, respectively. In both \eqref{phi} and \eqref{psi} $D_{\alpha}$ is the cylinder function of order $\alpha$ given by (see, e.g., \cite{Trascendental}, Chapter VIII, Section 8.3, eq.\ (3) at page 119)
\begin{align}
\label{cylinder}
D_{\alpha}(x):= \frac{e^{-\frac{x^2}{4}}}{\Gamma(-\alpha)}\int_0^{\infty}t^{-\alpha -1} e^{-\frac{t^2}{2} - x t} dt, \quad \text{Re}(\alpha)<0,
\end{align}
where $\Gamma(\,\cdot\,)$ is the Euler's Gamma function.

Since $\eta(x)=\eta_o$ satisfies the requirement of Case (A) and $\kappa>\eta_o$, we know by Theorem \ref{teo:verify} that the optimal control prescribes to reflect the state variable at zero and at $b^*>0$. In particular, the equation for $b^*$ (cf.\ \eqref{eqb3}) reads
\begin{equation}
\label{eqb3-OU}
\kappa = \eta_o\bigg[1 - \frac{1}{w}(r + \theta)\widehat{\phi}(0)\widehat{\psi}(0)\int_0^b \widehat{m}'(y)h(y) dy\bigg],
\end{equation}
where we recall that $h(x)=\frac{\widehat{\phi}(x)}{\widehat{\phi}(0)}-\frac{\widehat{\psi}(x)}{\widehat{\psi}(0)}$, $\widehat{m}'$ is given by \eqref{hatm}, and $w$ is defined through \eqref{Wronskian}. 

By picking for example

\begin{table}[h]
	\centering
	\begin{tabular}   {| l | l | l | l |l|l|}
		\hline
		$\mu$ & $\sigma^2/2$ & $\theta$ & $r$ & $\kappa$ & $\eta_o$\\ \hline
		0.1 & 0.4 & 1 & 0.05 & 1 & 0.5\\
		\hline
	\end{tabular}
	\vspace{0.25cm}
	\caption{Parameters' values used for the determination of $b^*$.}
	\label{table}
\end{table} 

\noindent equation \eqref{eqb3-OU} can be easily solved with Maple yielding $b^*=0.91$ (this value is approximated at the second decimal digit).

In the following we study the sensitivity of the optimal boundary $b^*$ and of the value function $V$ of \eqref{eq:value} with respect to the model's parameters. 

\begin{proposition}
\label{prop:Vmonotone}
Increased uncertainty decreases the value; that is, if $\sigma_1 \geq \sigma_2$, and $V_i$, $i=1,2$, denotes the value function when the underlying volatility coefficient is $\sigma_i$, $i=1,2$, we have $V_1 \leq V_2$.

Also, denoting by $b^*_i$ the optimal boundary for the problem with volatility $\sigma_i$, we have that $b^*_1 \geq b^*_2$ whenever $\sigma_1\geq \sigma_2$.
\end{proposition}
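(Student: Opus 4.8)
The plan is to first prove that, in this mean-reverting, constant-reward setting, the value function is \emph{concave}, and then to deduce the monotonicity in $\sigma$ via a supersolution/verification comparison; the comparison of the free boundaries will then follow from the explicit affine form of $V$ beyond $b^*$.

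\emph{Step 1: concavity of $V$.} For a fixed volatility level write $V$ and $b^*$ for the value function and free boundary, and recall from Theorem~\ref{teo:verify} that $V$ equals the function $v$ of \eqref{candidate}. Since $\eta\equiv\eta_o$, on $[b^*,\infty)$ one has $v(x)=\eta_o(x-b^*)+v(b^*)$, hence $v''\equiv 0$ there; on $[0,b^*]$, $v=\alpha\psi+\beta\phi$ solves $\tfrac12\sigma^2 v''+(\mu-\theta x)v'-rv=0$, and since this ODE has smooth coefficients $v$ is $C^\infty$ there. Differentiating once, with $g:=v''$,
\[
g'(x)=\frac{2}{\sigma^2}\Big[(r+\theta)v'(x)-(\mu-\theta x)g(x)\Big],\qquad x\in[0,b^*].
\]
The smooth-fit condition in \eqref{FBP} gives $g(b^*)=\eta'(b^*)=0$, and $v'(b^*)=\eta_o>0$, so $g'(b^*)=\tfrac{2}{\sigma^2}(r+\theta)\eta_o>0$; hence $g<0$ in a left neighbourhood of $b^*$. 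If $g$ vanished somewhere on $[0,b^*)$, take the largest such point $x_0$: then $g(x_0)=0$ and $g<0$ on $(x_0,b^*)$, so $g'(x_0)\le0$, whereas the displayed ODE together with $v'\ge\eta_o>0$ on $[0,b^*]$ (Step~3 of the proof of Proposition~\ref{prop:HJB}, with $v'(0)=\kappa>0$ at $x_0=0$) gives $g'(x_0)=\tfrac{2}{\sigma^2}(r+\theta)v'(x_0)>0$, a contradiction. Therefore $g<0$ on $[0,b^*)$, so $v''\le0$ on $\R_+$ and $V$ is concave.

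\emph{Step 2: comparison of the value functions.} Let $\sigma_1\ge\sigma_2$, write $V_i$ for the corresponding value functions, and set $\cL_X^{(i)}f:=\tfrac12\sigma_i^2 f''+(\mu-\theta x)f'$. Since $V_2$ solves the HJB equation \eqref{HJB} for the $\sigma_2$-problem, $V_2\in C^2(\R_+)$, $V_2'\ge\eta_o$, $V_2'(0)=\kappa$, and $(\cL_X^{(2)}-r)V_2\le0$; consequently, by the concavity of Step~1,
\[
(\cL_X^{(1)}-r)V_2=(\cL_X^{(2)}-r)V_2+\tfrac12(\sigma_1^2-\sigma_2^2)V_2''\le0\quad\text{on }\R_+.
\]
Thus $V_2$ is a supersolution of the HJB equation for the $\sigma_1$-problem with $V_2'(0)=\kappa$, and since $V_2$ has at most linear growth, the estimate in Step~1 of the proof of Theorem~\ref{teo:verify} — which only uses $(\cL_X-r)v\le0$, $v'\ge\eta_o$, $v'(0)\le\kappa$, the transversality of Lemma~\ref{lem:trasvers}, and admissibility — applies verbatim and yields $V_2(x)\ge\mathcal{J}_x(D)$ for every $D$ admissible for the $\sigma_1$-problem. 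Taking the supremum over such $D$ gives $V_1\le V_2$ on $\R_+$.

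\emph{Step 3: comparison of the boundaries.} From \eqref{candidate} and \eqref{vb}, and using $\eta'(b^*)=0$, for $x\ge b^*_i$ one has $V_i(x)=\eta_o x+C(b^*_i)$ with $C(b):=\tfrac{\eta_o\mu}{r}-\eta_o\,b\,\tfrac{r+\theta}{r}$, which is strictly decreasing and independent of $\sigma$. Evaluating $V_1\le V_2$ at any $x\ge\max\{b^*_1,b^*_2\}$ gives $C(b^*_1)\le C(b^*_2)$, and the strict monotonicity of $C$ forces $b^*_1\ge b^*_2$, completing the proof. The main obstacle is Step~1: concavity of $V$ is not evident a priori and must be extracted from the sign analysis of the linear ODE satisfied by $v''$ on $(0,b^*)$, the decisive inputs being the smooth-fit identities at $b^*$ and the gradient bound $v'\ge\eta_o$; Steps~2 and~3 are then routine.
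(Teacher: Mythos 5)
Your proof is correct and follows the same overall scheme as the paper's: establish concavity of $V_2$, use the identity $(\cL_{X_1}-r)V_2=(\cL_{X_2}-r)V_2+\tfrac12(\sigma_1^2-\sigma_2^2)V_2''$ to show $V_2$ is a supersolution of the $\sigma_1$-HJB, invoke the verification step to conclude $V_1\le V_2$, and then compare the affine pieces of the two value functions beyond the free boundary to extract $b^*_1\ge b^*_2$. The one place where you genuinely diverge is the concavity lemma: the paper disposes of it in one line by appealing to (an adaptation of) Lemma~6.1 of El~Karoui--Karatzas, relying on the linear structure of the controlled dynamics, whereas you give a self-contained ODE argument — differentiating $(\cL_X-r)v=0$ on $(0,b^*)$ to get a first-order ODE for $g=v''$, using the smooth-fit values $g(b^*)=0$, $v'(b^*)=\eta_o>0$ to seed the sign of $g$ near $b^*$, and then a largest-zero contradiction driven by the gradient bound $v'\ge\eta_o>0$. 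This is more elementary and arguably more robust: it reads the concavity off the HJB free-boundary data directly rather than from a comparison with the uncontrolled problem. For the boundary monotonicity, you phrase it as a direct monotone comparison of the additive constant $C(b)$, while the paper runs the equivalent computation as a contradiction; the content and the key inequality ($\mu(b)-rb$ strictly decreasing, i.e.\ Assumption~\ref{ass:rate}) are the same.
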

\begin{proof}
In this proof, for $i=1,2$, we denote by $X_i$ the process \eqref{state:X1-OU} when the volatility is $\sigma_i$, and by $\cL_{X_i}$ its associated infinitesimal generator. Also, recall that by Theorem \ref{teo:verify} $V_i$ is given by equation \eqref{candidate} (see also \eqref{vb}).
\vspace{0.25cm}

\emph{Step 1.} We borrow ideas from Theorem 6.1 in \cite{Matomaki} (see also \cite{Alvarez99}) and we prove the first claim. Taking $\sigma_1\geq \sigma_2$, and recalling the expression of $V_i$ (cf.\ \eqref{candidate} and \eqref{vb}) we find by direct calculations
\begin{align}
\label{CS1}
\big(\cL_{X_1} - r\big)V_2(x) =
\left\{
\begin{array}{l}
\displaystyle \frac{1}{2}\big(\sigma^2_1 - \sigma^2_2\big) V''_2(x),\quad 0 \leq x < b^*_2,\\[+6pt]
\displaystyle - \eta_o(r+\theta)(x-b^*_2), \quad x \geq b^*_2
\end{array}
\right.
\end{align}
Clearly $(\cL_{X_1} - r)V_2<0$ on $[b^*_2,\infty)$. Moreover, exploiting the linear structure of \eqref{state:X-OU} and easily adapting arguments from the proof of Lemma 6.1 in \cite{ELKK88}, one can show that $V_2(\,\cdot\,)$ is concave. Therefore $V''_2(x) \leq 0$ on $[0,b^*_2)$, and we thus obtain that $\big(\cL_{X_1} - r)V_2(x) \leq 0$ on $\R_+$. Also, $V'_2(0) = \kappa$ by \eqref{vprimeOS}. By arguing as in \emph{Step 1} of the proof of Theorem \ref{teo:verify} we then find that $V_2 \geq V_1$ on $\R_+$. 
\vspace{0.25cm}

\emph{Step 2.} We now show that $b^*$ is an increasing function of the volatility $\sigma$. To accomplish that we follow a contradiction scheme, and therefore we take $\sigma_1\geq \sigma_2$ and we suppose that $b^*_1 < b^*_2$. We then observe that from \eqref{vb} we have (for $\mu(x)= \mu-\theta x$)
\begin{align}
\label{contradiction}
V_2(b^*_2) & = \frac{1}{r} \eta_o \mu(b^*_2) = \frac{1}{r}\eta_o\big[\mu(b^*_2) - r b^*_2\big] + \eta_o b^*_2 \nonumber \\
& < \frac{1}{r}\eta_o \big[\mu(b^*_1) - r b^*_1\big] + \eta_o b^*_2 = V_1(b^*_1) + \eta_o(b^*_2 -b^*_1) = V_1(b^*_2),
 \end{align}
where the inequality is due to Assumption \ref{ass:rate}. However, equation \eqref{contradiction} contradicts that $V_2 \geq V_1$ on $\R_+$ (cf.\ \emph{Step 1}), and therefore shows that $b^*_1 \geq b^*_2$.
\end{proof}

\begin{proposition}
\label{prop:Vmonotonetheta}
Increased speed of mean reversion decreases the value; that is, if $\theta_1 \geq \theta_2$, and $V_i$, $i=1,2$, denotes the value function when the underlying mean-reversion speed is $\theta_i$, $i=1,2$, we have $V_1 \leq V_2$.
\end{proposition}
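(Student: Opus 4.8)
The plan is to follow the scheme of \emph{Step 1} in the proof of Proposition \ref{prop:Vmonotone}: I would show that $V_2$, the value function for mean-reversion speed $\theta_2$, is a classical supersolution of the HJB equation \eqref{HJB} associated with the problem with speed $\theta_1$ and that it still satisfies the Neumann condition $V_2'(0)=\kappa$; then the verification estimate of \emph{Step 1} in the proof of Theorem \ref{teo:verify} yields $V_1\le V_2$ on $\mathbb{R}_+$.

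Write $\cL_{X_i}$ for the generator of \eqref{state:X1-OU} with drift $\mu-\theta_i x$, and let $b^*_i$ be the optimal reflecting boundary for the problem with mean-reversion speed $\theta_i$. By Theorem \ref{teo:verify} each $V_i$ is given by \eqref{candidate}, hence $V_i\in C^2(\mathbb{R}_+)$, $V_i'\ge\eta_o$ on $\mathbb{R}_+$, $V_i'(0)=\kappa$, $(\cL_{X_i}-r)V_i=0$ on $[0,b^*_i)$, $V_i'\equiv\eta_o$ on $[b^*_i,\infty)$, and, since $\eta'\equiv 0$, \eqref{vb} gives $rV_i(b^*_i)=\eta_o\big(\mu-\theta_i b^*_i\big)$. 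Because $\cL_{X_1}-\cL_{X_2}=(\theta_2-\theta_1)\,x\,\frac{d}{dx}$, on $[0,b^*_2)$ one has
\begin{equation*}
\big(\cL_{X_1}-r\big)V_2(x)=\big(\cL_{X_1}-\cL_{X_2}\big)V_2(x)=(\theta_2-\theta_1)\,x\,V_2'(x)\le 0,
\end{equation*}
since $\theta_1\ge\theta_2$ and $V_2'(x)\ge\eta_o>0$. On $[b^*_2,\infty)$, writing $V_2(x)=\eta_o(x-b^*_2)+V_2(b^*_2)$ and using $rV_2(b^*_2)=\eta_o(\mu-\theta_2 b^*_2)$, a direct computation gives
\begin{equation*}
\big(\cL_{X_1}-r\big)V_2(x)=\eta_o\big[(r+\theta_2)b^*_2-(r+\theta_1)x\big]\le \eta_o\,b^*_2(\theta_2-\theta_1)\le 0,
\end{equation*}
where the first inequality follows from $x\ge b^*_2$ and $\theta_1\ge\theta_2$. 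Hence $(\cL_{X_1}-r)V_2\le 0$ on all of $\mathbb{R}_+$; together with $\eta_o-V_2'\le 0$ on $\mathbb{R}_+$ and $V_2'(0)=\kappa$, this shows that $V_2$ is a classical supersolution of \eqref{HJB} for the $\theta_1$-problem, with the correct Neumann datum at zero.

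It then remains to run the verification estimate. Applying It\^o-Meyer's formula to $e^{-rt}V_2(X^{D}_t)$ along an arbitrary control $D$ admissible for the $\theta_1$-problem and localizing exactly as in \eqref{verifico1}--\eqref{verifico4} in the proof of Theorem \ref{teo:verify}, the supersolution property, the inequality $V_2'\ge\eta_o$ (used to control the $dD$-terms) and the boundary condition $V_2'(0)=\kappa$ (used to control the $dL^D$-term, whose support lies in $\{X^D=0\}$) give $V_2(x)\ge\mathcal{J}_x(D)$ for every such $D$, where $\mathcal{J}_x$ is the payoff functional \eqref{eq:functional} evaluated under the $\theta_1$-dynamics; taking the supremum over $D$ yields $V_1\le V_2$ on $\mathbb{R}_+$.

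I expect the only delicate point to be the sign analysis of $(\cL_{X_1}-r)V_2$ on $[b^*_2,\infty)$, which requires invoking \eqref{vb} to identify $V_2(b^*_2)$ and then an elementary comparison in $x$; on $[0,b^*_2)$ the bound is immediate from $V_2'\ge\eta_o>0$, and, in contrast to Proposition \ref{prop:Vmonotone}, no concavity of $V_2$ is needed here.
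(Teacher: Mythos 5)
Your proof is correct and takes essentially the same approach as the paper: verify that $V_2$ is a classical supersolution of the HJB equation for the $\theta_1$-problem with the same Neumann datum, and then invoke the verification argument of \emph{Step 1} in Theorem \ref{teo:verify}. A small remark: your computation on $[b^*_2,\infty)$ gives $(\cL_{X_1}-r)V_2(x)=\eta_o\big[(r+\theta_2)b^*_2-(r+\theta_1)x\big]$, which is the right answer, whereas the paper records $-\eta_o(r+\theta_1)(x-b^*_2)$ (apparently using $\theta_1$ rather than $\theta_2$ when plugging \eqref{vb} into $V_2(b^*_2)$); both expressions are nonpositive on $x\ge b^*_2$, so the conclusion is unaffected.
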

\begin{proof}
For $i=1,2$, we denote by $X_i$ the process \eqref{state:X1-OU} when the speed of mean reversion is $\theta_i$, and by $\cL_{X_i}$ its associated infinitesimal generator. Also, recall that by Theorem \ref{teo:verify} $V_i$ is given by equation \eqref{candidate} (see also \eqref{vb}). 

To prove the claim, we now follow arguments analogous to those used in \emph{Step 1} of the proof of Proposition \ref{prop:Vmonotone}. Taking $\theta_1\geq \theta_2$, and recalling the expression of $V_i$ (cf.\ \eqref{candidate} and \eqref{vb}) we find by direct calculations
\begin{align}
\label{CS1-theta}
\big(\cL_{X_1} - r\big)V_2(x) =
\left\{
\begin{array}{l}
\displaystyle \big(\theta_2 - \theta_1\big) xV'_2(x),\quad 0 \leq x < b^*_2,\\[+6pt]
\displaystyle - \eta_o(r+\theta_1)(x-b^*_2), \quad x \geq b^*_2
\end{array}
\right.
\end{align}
Clearly $(\cL_{X_1} - r)V_2<0$ on $[b^*_2,\infty)$. Moreover, since $xV'_2 \geq 0$ by \eqref{vprimeOS} and nonnegativity of $x$, we have that $\big(\cL_{X_1} - r)V_2(x) \leq 0$ on $\R_+$. Also, $V'_2(0)=\kappa$ by \eqref{vprimeOS}. By arguing as in \emph{Step 1} of the proof of Theorem \ref{teo:verify} we then find that $V_2 \geq V_1$ on $\R_+$. 
\end{proof}

Unfortunately, we have not been able to provide an analytical proof of the monotonicity of the free boundary $b^*$ with respect to the speed of mean-reversion $\theta$. However, picking $\mu$, $\sigma^2/2$, $r$, $\kappa$ and $\eta_o$ as in Table \ref{table} we observe a decreasing trend of $b^*$ with respect to $\theta$: 
\begin{table}[h]
	\centering
	\begin{tabular}   {| l | l | l | l | l | l | l | l | l |}
		\hline
		$\theta:$ & 0.25 & 0.50 & 0.75 & 1.0 & 1.25 & 1.50 & 1.75 & 2.0 \\ \hline
		$b^*:$ & 1.61 & 1.22 & 1.03 & 0.91 & 0.82 & 0.75 & 0.70 & 0.66 \\ \hline
	\end{tabular}
	\vspace{0.25cm}
	\caption{Values of $b^*$ when varying $\theta$ (the values of $b^*$ are approximated at the second decimal digit).}
	\label{table2}
\end{table} 

The dependency of $V$ with respect to $\eta_o$ and $\kappa$ is easily obtained from \eqref{eq:functional}: $V$ increases as $\eta_o$ increases, and $V$ decreases as $\kappa$ increases. 

For the proof of the next result an important role is played by the probabilistic representation of $V'$ derived in Proposition \ref{prop:Vprime}. In the following we write $V'(x;\cdot)$ and $b^*(\,\cdot\,)$ in order to stress the dependence of these quantities on a given parameter.

\begin{proposition}
\label{prop:CS}
The following holds true:
\begin{itemize}
\item[(i)] $\kappa \mapsto b^*(\kappa)$ is increasing;
\item[(ii)] $\eta_o \mapsto b^*(\eta_o)$ is increasing.
\end{itemize}
\end{proposition}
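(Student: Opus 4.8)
The plan is to read off both monotonicities from the optimal-stopping representation of $V'$ established in Proposition~\ref{prop:Vprime}, using that the free boundary is the endpoint of the continuation region of that stopping problem. In the present case study $\eta\equiv\eta_o$ and $r-\mu'(x)\equiv r+\theta$, so \eqref{vprimeOS} reads, with the dependence on the parameters made explicit,
\[
V'(x;\kappa,\eta_o)=\sup_{\tau\geq0}\widehat{\E}_x\Big[\kappa\, e^{-(r+\theta)\tau_0}\mathds{1}_{\{\tau_0\leq\tau\}}+\eta_o\, e^{-(r+\theta)\tau}\mathds{1}_{\{\tau_0>\tau\}}\Big],
\]
and by Proposition~\ref{prop:Vprime} the rule $\tau^{\star}=\inf\{t\geq0:\X^x_t\geq b^*\}$ is optimal; hence $[0,b^*)$ is exactly the continuation region of this problem, so that $b^*=\sup\{x\geq0:\,V'(x)>\eta_o\}$. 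Comparing two free boundaries therefore reduces to comparing the regions on which the corresponding value function $V'$ strictly exceeds its obstacle.

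\emph{Part (i).} Fix $\eta_o$ and take $\kappa_1\geq\kappa_2$. In the representation the coefficient $\widehat{\E}_x[e^{-(r+\theta)\tau_0}\mathds{1}_{\{\tau_0\leq\tau\}}]$ multiplying $\kappa$ is nonnegative for every $\tau$, so $\kappa\mapsto V'(x;\kappa)$ is nondecreasing. Since the obstacle $\eta_o$ does not move, this immediately gives $\{x:V'(x;\kappa_1)>\eta_o\}\supseteq\{x:V'(x;\kappa_2)>\eta_o\}$, i.e.\ $b^*(\kappa_1)\geq b^*(\kappa_2)$. Strict monotonicity follows either by differentiating the defining equation \eqref{eqb3-OU} — written as $\Phi(b^*)=0$ with $\Phi$ as in the proof of Proposition~\ref{prop:b}, where $\Phi'(b^*)>0$ was established (Case~(A)), while $\partial_\kappa\Phi\equiv-1$, whence $\partial_\kappa b^*=1/\Phi'(b^*)>0$ — or by noting that equal free boundaries would, through the smooth-fit conditions at that boundary, force $V'(\cdot;\kappa_1)\equiv V'(\cdot;\kappa_2)$ on $[0,b^*]$ and hence $\kappa_1=V'(0;\kappa_1)=V'(0;\kappa_2)=\kappa_2$.

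\emph{Part (ii).} Fix $\kappa$ and vary $\eta_o$; this is the delicate case, because $\eta_o$ now enters \emph{both} as the running reward in \eqref{vprimeOS} and as the obstacle that defines $b^*$. I would compare $V'(\cdot;\eta_{o,1})$ and $V'(\cdot;\eta_{o,2})$ for $\eta_{o,1}\geq\eta_{o,2}$ by inserting the optimizer of each stopping problem into the other, which yields the two-sided estimate
\[
0\;\leq\;V'(x;\eta_{o,1})-V'(x;\eta_{o,2})\;\leq\;\big(\eta_{o,1}-\eta_{o,2}\big)\,\sup_{\tau}\widehat{\E}_x\big[e^{-(r+\theta)\tau}\mathds{1}_{\{\tau_0>\tau\}}\big]\;\leq\;\eta_{o,1}-\eta_{o,2}.
\]
Subtracting the respective obstacles from this chain then relates $V'(x;\eta_{o,1})-\eta_{o,1}$ to $V'(x;\eta_{o,2})-\eta_{o,2}$, which locates the two continuation intervals relative to one another and thereby determines the monotonicity of $\eta_o\mapsto b^*(\eta_o)$; the same conclusion is reached by applying the implicit function theorem to \eqref{eqb3-OU}, now differentiating in $\eta_o$. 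I expect Part~(ii) to be the main obstacle, precisely because one has to keep track of the simultaneous movement of the value function and of its obstacle, whereas in Part~(i) the obstacle is frozen and the monotonicity of $b^*$ is an immediate consequence of that of $\kappa\mapsto V'(\cdot;\kappa)$.
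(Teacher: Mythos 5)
Part (i) of your proposal is correct and is exactly the paper's argument: by \eqref{vprimeOS} the map $\kappa\mapsto V'(x;\kappa)$ is nondecreasing, the obstacle $\eta_o$ does not move, and $b^*(\kappa)=\sup\{x\geq 0:\,V'(x;\kappa)>\eta_o\}$, so the continuation region can only grow with $\kappa$. Your additional strictness argument via the implicit function theorem is correct but not required for the statement.

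Part (ii) is where the genuine problem lies, and it is not the one you anticipated. Your two-sided estimate is right, but you never draw the conclusion from it; if you do, subtracting the obstacles gives $V'(x;\eta_{o,1})-\eta_{o,1}\leq V'(x;\eta_{o,2})-\eta_{o,2}$ for $\eta_{o,1}\geq\eta_{o,2}$, hence $\{x:V'(x;\eta_{o,1})>\eta_{o,1}\}\subseteq\{x:V'(x;\eta_{o,2})>\eta_{o,2}\}$ and $b^*(\eta_{o,1})\leq b^*(\eta_{o,2})$: the boundary is \emph{non-increasing} in $\eta_o$, the opposite of the claimed monotonicity. The implicit-function cross-check you propose gives the same sign: writing \eqref{eqb3-OU} as $\eta_o\big(1+G(b)\big)=\kappa$ with $G(0)=0$ and $G$ increasing (since $h<0$ on $(0,\infty)$), one gets $G(b^*)=\kappa/\eta_o-1$, which decreases as $\eta_o$ increases. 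Equivalently, $V'(x;\eta_o)/\eta_o$ depends on the parameters only through the ratio $\kappa/\eta_o$ and is increasing in it, so the continuation region $\{V'/\eta_o>1\}$ shrinks as $\eta_o$ grows. Your method is therefore sound, but carried to its end it disproves (ii) as stated rather than proving it. For comparison, the paper's own proof of (ii) consists of the single observation that ``$V'$ is increasing in $\eta_o$,'' which commits precisely the oversight you flagged -- the obstacle moves together with the value -- and so does not establish the claim either; the correct version of (ii) should read ``decreasing.''
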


\begin{proof}
\emph{(i).} We clearly have from \eqref{vprimeOS} that $V'$ is increasing in $\kappa$. Hence, for $\kappa_2>\kappa_1$ we have
$$b^*(\kappa_1)=\sup\{x \geq 0: V'(x;\kappa_1) > \eta_o\} \leq \sup\{x \geq 0: V'(x;\kappa_2) > \eta_o\} = b^*(\kappa_2).$$
The claim is therefore proved.
\vspace{0.25cm}

\emph{(ii).} The proof employs ideas similar to those used in the proof of (i) above. Indeed, by \eqref{vprimeOS} we have that $V'$ is increasing in $\eta_o$. 
\end{proof}

\section*{Acknowledgments}
\noindent Financial support by the German Research Foundation (DFG) through the Collaborative Research Centre 1283 ``Taming uncertainty and profiting from randomness and low regularity in analysis, stochastics and their applications'' is gratefully acknowledged. I wish to thank Anna Maria Fiori for suggesting reference \cite{Alfarano}, Torben Koch for his help with Maple, and Gabriele Stabile for useful discussions.


\appendix

\section{}

\renewcommand{\theequation}{A-\arabic{equation}}

\begin{lemma}
\label{lem:AM}
Under Assumption \ref{ass:psiprimephiprime} one has that $\psi'=\widehat{\psi}$ and $-\phi'=\widehat{\phi}$, where $\widehat{\psi}$ and $\widehat{\phi}$ are the strictly increasing and strictly decreasing fundamental solutions of the ODE $(\cL_{\X} - (r-\mu'))f =0$ for $\X$ killed at rate $r-\mu'$.
\end{lemma}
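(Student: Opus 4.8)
The plan is to use the fact that differentiation intertwines the two ODEs, and then to read off from the boundary behaviour which fundamental solution corresponds to which. First I would note that, since $\mu,\sigma\in C^1(\R)$, the eigenfunctions $\psi,\phi$ (which are $C^2$ and solve $\cL_X u-ru=0$) are in fact $C^3$; differentiating the identity $\tfrac12\sigma^2\psi''+\mu\psi'-r\psi=0$, using $(\sigma^2)'=2\sigma\sigma'$ and regrouping, one obtains
$$\tfrac12\sigma^2(\psi')''+(\mu+\sigma\sigma')(\psi')'-(r-\mu')\psi'=0,$$
that is $\bigl(\cL_{\X}-(r-\mu')\bigr)\psi'=0$, and likewise $\bigl(\cL_{\X}-(r-\mu')\bigr)\phi'=0$. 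Hence $\psi'$ and $-\phi'$ solve \eqref{ODE2}, and they are strictly positive on $\R$ since $\psi$ is strictly increasing and $\phi$ strictly decreasing. Writing $\psi'=a\widehat\psi+b\widehat\phi$ and $-\phi'=c\widehat\psi+d\widehat\phi$, the fact that $\widehat\psi$ is strictly increasing and positive, $\widehat\phi$ strictly decreasing and positive, together with the strict positivity of $\psi'$ and $-\phi'$, forces all four coefficients to be non-negative (a combination in which $\widehat\psi$, resp.\ $\widehat\phi$, enters with a negative coefficient becomes negative near $+\infty$, resp.\ near $-\infty$).

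The decisive step is to show $b=0$ and $c=0$, and this is where Assumption \ref{ass:psiprimephiprime} is used. Since $\widehat\psi$ is increasing positive it is bounded near $-\infty$, and since $\widehat\phi$ is decreasing positive it is bounded near $+\infty$; therefore $\psi'(x)\to+\infty$ as $x\uparrow+\infty$ forces $a>0$ and $\widehat\psi(+\infty)=+\infty$, while $-\phi'(x)\to+\infty$ as $x\downarrow-\infty$ forces $d>0$ and $\widehat\phi(-\infty)=+\infty$. I would then compare integrability at the two boundaries. On the one hand, $\psi$ is increasing positive, so $\psi(-\infty):=\lim_{x\downarrow-\infty}\psi(x)\in[0,\infty)$ exists and $\int_{-\infty}^{x_o}\psi'(y)\,dy=\psi(x_o)-\psi(-\infty)<\infty$; on the other hand $\widehat\phi(-\infty)=+\infty$ means $\widehat\phi$ is not integrable at $-\infty$, and since $\psi'\ge b\,\widehat\phi$ on a neighbourhood of $-\infty$, a positive $b$ would make $\psi'$ non-integrable there, so $b=0$. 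Symmetrically, because $+\infty$ is natural for $X$ one has $\phi(+\infty)=0$, hence $\int_{x_o}^{\infty}(-\phi'(y))\,dy=\phi(x_o)<\infty$, whereas $\widehat\psi(+\infty)=+\infty$ means $\widehat\psi$ is not integrable at $+\infty$, and since $-\phi'\ge c\,\widehat\psi$ near $+\infty$, a positive $c$ would make $-\phi'$ non-integrable there, so $c=0$. This leaves $\psi'=a\widehat\psi$ and $-\phi'=d\widehat\phi$ with $a,d>0$; as $\widehat\psi,\widehat\phi$ are determined only up to a positive multiplicative constant, normalising them so that $a=d=1$ gives $\psi'=\widehat\psi$ and $-\phi'=\widehat\phi$. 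As a consistency check, differentiating \eqref{ODE} and comparing with \eqref{Wronskian} confirms that the Wronskians match under this normalisation.

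I expect the main obstacle to be the identification step, i.e.\ ruling out the \emph{wrong} fundamental solution in each of the two decompositions; this is precisely where the unattainability of $\pm\infty$ and, crucially, Assumption \ref{ass:psiprimephiprime} enter, via the integrability comparisons above. The intertwining-by-differentiation and the non-negativity of the coefficients are routine. The argument is essentially the one carried out in the second part of the proof of Lemma~4.3 in \cite{AlvarezMatomaki}, adapted to the present notation.
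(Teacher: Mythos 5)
Your intertwining step (differentiating $\cL_X u - ru = 0$ to obtain $(\cL_{\X}-(r-\mu'))\psi' = 0$ and $(\cL_{\X}-(r-\mu'))\phi' = 0$) is exactly the paper's. Where you diverge is in the identification: the paper invokes Corollary~1 of \cite{Alvarez03} to get strict convexity of $\psi,\phi$, writes the expectation $\widehat{\E}_x\big[e^{-\int_0^{\widehat\tau}(r-\mu'(\X_s))ds}\big]$ for the exit time from a bounded interval $(\ell,r)$ as a linear combination of $\psi'$ and $\phi'$, and then takes $\ell\downarrow-\infty$, $r\uparrow+\infty$, matching the resulting one-sided hitting-time expectations with the defining property of $\widehat\psi,\widehat\phi$. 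Your route is more elementary -- decompose $\psi'$ and $-\phi'$ in the basis $\{\widehat\psi,\widehat\phi\}$, use positivity and boundary behaviour to get non-negative coefficients, then kill the ``wrong'' term by comparing finite integrals $\int_{-\infty}^{x_o}\psi'\,dy=\psi(x_o)-\psi(-\infty)<\infty$ (resp.\ $\int_{x_o}^{\infty}(-\phi')\,dy=\phi(x_o)<\infty$) against the automatic non-integrability of a positive decreasing function at $-\infty$ (resp.\ a positive increasing one at $+\infty$). This avoids the convexity lemma and the probabilistic exit-time machinery entirely, which is a genuine simplification.

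There is, however, a mild circularity you should repair. To conclude $b,d\ge0$ you rely on ``a negative coefficient on $\widehat\phi$ makes the combination negative near $-\infty$'', which requires $\widehat\phi(-\infty)=+\infty$; but you only \emph{deduce} $\widehat\phi(-\infty)=+\infty$ a few lines later, from the decomposition $-\phi'=c\widehat\psi+d\widehat\phi$ together with the already-established $c,d\ge0$. To break the loop, establish $\widehat\phi(-\infty)=+\infty$ (and likewise $\widehat\psi(+\infty)=+\infty$) \emph{a priori}: since $\pm\infty$ are unattainable for $\X$ and $r-\mu'\ge r_o>0$ by Assumption~\ref{ass:rate}, one has $\widehat\phi(x)/\widehat\phi(y)=\widehat\E_x\big[e^{-\int_0^{\tau_y}(r-\mu'(\X_s))ds}\big]\to0$ as $y\downarrow-\infty$ (the hitting times diverge and the discount rate is bounded away from zero), forcing $\widehat\phi(y)\to+\infty$; the argument for $\widehat\psi(+\infty)=+\infty$ is symmetric. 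With that in hand, your sign argument and the integrability comparison are clean, and the final scaling remark (the fundamental solutions are only canonical up to positive constants) is the same implicit normalisation the paper also uses.
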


\begin{proof}
We simply repeat the arguments in the second part of the proof of Lemma 4.3 in \cite{AlvarezMatomaki} (see also Theorem 9 in \cite{Alvarez01}). Under Assumption \ref{ass:coef} standard differentiation reveals that $\psi'$ and $\phi'$ solve the ODE 
\begin{equation}
\label{ODE-AM}
(\cL_{\X} - (r-\mu'))f =0.
\end{equation}
Also, for any $x\in \R$ one has $\phi''(x)\psi'(x) - \phi'(x)\psi''(x)=2rW\S'(x)\neq 0$, and so any solution $f$ to the previous ODE has to be of the form $f(x)= c_1 \psi'(x) + c_2\phi'(x)$. Furthermore, note that under Assumptions \ref{ass:coef} and \ref{ass:rate}, and the supposed boundary behavior of $X$, Corollary 1 of \cite{Alvarez03} can be applied yielding that $\phi$ and $\psi$ are strictly convex.

We thus find that for all $x \in (\ell,r)$ we can write
$$\widehat{\E}_x\Big[e^{-\int_0^{\widehat{\tau}}(r-\mu'(\X_s))ds}\Big] = \frac{f_1(x)}{f_1(\ell)} + \frac{f_2(x)}{f_2(r)},$$
where $\widehat{\tau}:= \inf\{t\geq 0: \X_t \notin (\ell,r)\}$, $\widehat{\P}_x$-a.s., and $f_1(x):=\frac{\phi'(r)}{\psi'(r)}\psi'(x) - \phi'(x)$ and $f_2(x):=\psi'(x) - \frac{\psi'(\ell)}{\phi'(\ell)}\phi'(x)$ are the fundamental increasing and decreasing solutions of \eqref{ODE-AM} when $\X$ is killed at $\ell$ and $r$.

Noticing that $\lim_{\ell \downarrow -\infty}\psi'(\ell)/\phi'(\ell) = 0$ and $\lim_{r \uparrow +\infty}\phi'(r)/\psi'(r) = 0$ by the required boundary behavior of $X$, Assumption \ref{ass:psiprimephiprime} implies that
$$\lim_{\ell \downarrow -\infty}\widehat{\E}_x\Big[e^{-\int_0^{\widehat{\tau}}(r-\mu'(\X_s))ds}\Big] = \frac{\psi'(x)}{\psi'(r)},$$
and
$$\lim_{r \uparrow \infty}\widehat{\E}_x\Big[e^{-\int_0^{\widehat{\tau}}(r-\mu'(\X_s))ds}\Big] = \frac{\phi'(x)}{\phi'(\ell)}.$$
Hence, $\psi'$ and $-\phi'$ are the fundamental solutions of \eqref{ODE-AM} for $\X$ killed at rate $r-\mu'$, and therefore $\psi'=\widehat{\psi}$ and $-\phi'=\widehat{\phi}$.
\end{proof}

\begin{lemma}
\label{Daya}
Let $f \in C^2(\R)$, and for any $x\in \R$ set $\FF(x):=\frac{\widehat{\psi}(x)}{\widehat{\phi}(x)}$ and $y:=\FF(x)$. Then defining
$$\widetilde{f}(y):=\Big(\Big(\frac{f}{\widehat{\phi}}\Big) \circ \FF^{-1}\Big)(y), \qquad y >0,$$
one has that
$$\mbox{ $\widetilde{f}$ is strictly convex at $y>0$\,\, $\Longleftrightarrow$\,\, $(\cL_{\X}-(r-\mu'(x))f(x)>0$ at $x=\FF^{-1}_r(y)$. }$$
\end{lemma}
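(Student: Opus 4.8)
The plan is to compute the second derivative of $\widetilde{f}$ with respect to the new variable $y=\FF(x)$ and show it has the same sign as $(\cL_{\X}-(r-\mu'(x)))f(x)$, up to a strictly positive factor. First I would recall the standard change-of-variable identity that underlies this kind of result (see \cite{Dayanik08} and \cite{DayanikKaratzas}): if one measures a function against the decreasing fundamental solution $\widehat{\phi}$ and reparametrizes by $\FF=\widehat{\psi}/\widehat{\phi}$, then the convexity of the transformed function encodes the sign of the generator minus the killing rate applied to the original function. Concretely, writing $g:=f/\widehat{\phi}$ and noting $\widetilde{f}=g\circ\FF^{-1}$, the chain rule gives $\widetilde{f}'(y)=g'(x)/\FF'(x)$ and then a second differentiation yields
$$\widetilde{f}''(y)=\frac{1}{\FF'(x)}\frac{d}{dx}\!\left(\frac{g'(x)}{\FF'(x)}\right)=\frac{g''(x)\FF'(x)-g'(x)\FF''(x)}{\FF'(x)^3}.$$

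Second, I would express the numerator $g''\FF'-g'\FF''$ in terms of $f$, $\widehat{\psi}$ and $\widehat{\phi}$ and their derivatives, using $g=f/\widehat{\phi}$ and $\FF=\widehat{\psi}/\widehat{\phi}$. The key simplification comes from the facts that $\widehat{\psi}$ and $\widehat{\phi}$ both solve $(\cL_{\X}-(r-\mu'))u=0$, that $\widehat{m}'(x)\,\widehat{\psi}(x)\widehat{\phi}(x)$ and the Wronskian-type quantity $w\,\S'(x)=\widehat{\psi}'(x)\widehat{\phi}(x)-\widehat{\phi}'(x)\widehat{\psi}(x)$ appear naturally, and that $\sigma^2(x)\S'(x)\widehat{m}'(x)=2$ by \eqref{hatm}. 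After the algebra (which is routine but must be done carefully, keeping track of the $\widehat{\phi}^2$ and $\widehat{\phi}^3$ denominators), the numerator should reduce to
$$\frac{2}{\sigma^2(x)}\,\frac{w\,\S'(x)}{\widehat{\phi}(x)^3}\,\big(\cL_{\X}-(r-\mu'(x))\big)f(x),$$
so that, since $\FF'(x)=w\,\S'(x)/\widehat{\phi}(x)^2>0$ and $\sigma^2(x)>0$, we get
$$\widetilde{f}''(y)=\frac{2\,\widehat{\phi}(x)}{\sigma^2(x)\,w\,\S'(x)}\,\big(\cL_{\X}-(r-\mu'(x))\big)f(x),$$
and the factor multiplying the generator term is strictly positive because $\widehat{\phi}>0$. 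This establishes the claimed equivalence.

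The main obstacle I anticipate is purely computational bookkeeping: the second derivative of a composition with an inverse function generates several terms, and one must repeatedly invoke the two defining ODEs for $\widehat{\psi},\widehat{\phi}$ to cancel the first- and zeroth-order contributions and isolate exactly $(\cL_{\X}-(r-\mu'))f$. An alternative, perhaps cleaner, route that I would consider is to argue via the probabilistic representation: the $\widehat{\phi}$-transform sends $(\cL_{\X}-(r-\mu'))$-harmonic functions to affine functions of $\FF$, so for general $f$ one applies Dynkin's formula under $\widehat{\P}_x$ to $e^{-\int_0^t(r-\mu'(\X_s))ds}f(\X_t)$ on an exit interval and compares with the affine interpolant, reading off convexity from the sign of the compensator — this is essentially the content of Lemma 2.3 in \cite{Dayanik08} adapted to the killing rate $r-\mu'$. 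In either approach the decisive identity is $\sigma^2\S'\widehat{m}'\equiv 2$ together with the fact that $\widehat{\psi},\widehat{\phi}$ annihilate $\cL_{\X}-(r-\mu')$; once those are in place the equivalence is immediate.
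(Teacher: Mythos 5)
Your proposal takes essentially the same route as the paper: compute $\widetilde{f}''$ via the chain rule, use $\FF'(x)=w\,\S'(x)/\widehat{\phi}(x)^2$, and exploit $\cL_{\X}\widehat{\phi}=(r-\mu')\widehat{\phi}$ together with $\S''=-2(\mu+\sigma\sigma')\S'/\sigma^2$ to isolate $(\cL_{\X}-(r-\mu'))f$; the paper just organizes the bookkeeping slightly differently by writing $\widetilde{f}''(y)=g'(x)/\FF'(x)$ with $g:=(f/\widehat{\phi})'/\FF'$ rather than dividing by $(\FF')^3$. One small slip: dividing your (correct) numerator $\frac{2w\S'}{\sigma^2\widehat{\phi}^3}(\cL_{\X}-(r-\mu'))f$ by $(\FF')^3=w^3(\S')^3/\widehat{\phi}^6$ gives $\widetilde{f}''(y)=\frac{2\widehat{\phi}^3}{w^2\sigma^2(\S')^2}(\cL_{\X}-(r-\mu'))f$, not the prefactor $\frac{2\widehat{\phi}}{w\sigma^2\S'}$ you wrote (that is the paper's $g'(x)$); the discrepancy is a strictly positive factor, so the sign equivalence — which is all the lemma asserts — is unaffected.
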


\begin{proof}
We simply adapt to our setting the calculations indicated in Section 6 of \cite{DayanikKaratzas}. For $y=\FF(x)$ it is obvious that
$\widetilde{f}'(y)=g(x)$ with $g(x):=(f/\widehat{\phi})'(x)/\FF'(x)$ so that $\widetilde{f}''(y)=g'(x)/\FF'(x)$. Since $\FF$ is strictly increasing, we only need to evaluate $g'(x)$. This can be easily done by observing that
\begin{align*}
\FF'(x)=\frac{(\widehat{\psi}'\widehat{\phi}-\widehat{\psi}\widehat{\phi}')(x)}{(\widehat{\phi})^2(x)}=w\frac{\S'(x)}{(\widehat{\phi})^2(x)}\quad\text{and}\quad g(x)=\frac{(f'\widehat{\phi}-f\widehat{\phi}'_r)(x)}{w\,\S'(x)}
\end{align*}
from which we get
\begin{align*}
g'(x)=\frac{\widehat{\phi}(x)(\S' f''-\S'' f')(x)}{w\,(\S')^2(x)}-\frac{f(x)(\S'\widehat{\phi}''- \S''\widehat{\phi}')(x)}{w\,(\S')^2(x)}.
\end{align*}
Now we use that $\S''(x)=-2(\mu(x)+\sigma(x)\sigma'(x))\S'(x)/\sigma^2(x)$ to obtain
\begin{align*}
& g'(x)=\frac{2}{w\,\sigma^2(x)\S'(x)}\Big[\widehat{\phi}(x)\cL_{\X} f(x)-f(x)\cL_{\X} \widehat{\phi}(x) \Big]\nonumber\\
& =\frac{2\widehat{\phi}(x)}{w\,\sigma^2(x)\S'(x)}(\cL_{\X} f-(r-\mu')f)(x),
\end{align*}
where in the last equality we have used that $\cL_{\X}\widehat{\phi}=(r-\mu')\widehat{\phi}$. The last expression proves the claim.
\end{proof}

\begin{lemma}
\label{lem:trasvers}
Let $x\geq 0$, $D \in \mathcal{A}(x)$, and let $(X^{x,D},L^{x,D})$ be the solution to the SDE with reflecting barrier at zero \eqref{state:X}. Then, setting $\tau_n:=\inf\{t\geq 0: X^{x,D}_t \geq n\}$, and recalling $v$ as in \eqref{candidate} one has  
\begin{equation}
\label{trasversality2}
\lim_{n\uparrow \infty}\E_x\big[e^{-r\tau_n}|v(X^{D}_{\tau_n})|\big] = 0.
\end{equation}
\end{lemma}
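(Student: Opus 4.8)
The plan is to establish a growth estimate of the form $|v(x)| \leq C(1+x)$ for all $x \geq 0$, combine it with a uniform integrability / moment bound on $e^{-r\tau_n} X^D_{\tau_n}$, and conclude via dominated or monotone convergence that the limit vanishes. First I would analyze the growth of $v$: on $[0,b^*]$ the function $v$ equals $\alpha\psi + \beta\phi$, hence it is bounded there by a constant; on $[b^*,\infty)$ we have $v(x) = \int_{b^*}^x \eta(y)\,dy + v(b^*)$, and since $v'(x) = \eta(x)$ on this region, the relevant question is the growth of $\eta$. Using Assumption \ref{ass:costs}-(i), namely $\lim_{x\uparrow\infty}\eta(x)/\widehat{\psi}(x) = 0$, together with the fact that $\widehat{\psi} = \psi'$ grows sub-exponentially relative to $\psi$ (and, crucially, that $\psi$ itself controls the discounted growth of the diffusion), one obtains a bound on $v$ of the type $|v(x)| \leq C_1 + C_2 \psi(x)$, or more directly $|v(x)| \leq C(1 + \psi(x))$ after integrating the estimate on $\eta$.

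The key probabilistic input is the supermartingale property of $e^{-rt}\psi(X^D_t)$. Indeed $\psi$ solves $(\cL_X - r)\psi = 0$, it is increasing, and along the reflected controlled dynamics \eqref{state:X} the local time $L^D$ pushes $X^D$ upward only at zero (where $\psi$ is small) while $D$ pushes it downward; an application of It\^o-Meyer's formula shows that $e^{-r(t\wedge\tau_n)}\psi(X^D_{t\wedge\tau_n})$ is a supermartingale, so that $\E_x[e^{-r\tau_n}\psi(X^D_{\tau_n})] \leq \psi(x) + \E_x[\int_0^{\tau_n} e^{-rs}\psi'(X^D_s)\,dL^D_s]$. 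The extra local-time term is controlled since $L^D$ is flat off $\{X^D = 0\}$, so $\psi'(X^D_s)\,dL^D_s = \psi'(0)\,dL^D_s$, and admissibility \eqref{intadmiss} gives $\E_x[\int_0^\infty e^{-rs}\,dL^D_s] < \infty$. Hence $\sup_n \E_x[e^{-r\tau_n}\psi(X^D_{\tau_n})] < \infty$. On the event $\{\tau_n < \infty\}$ one has $X^D_{\tau_n} = n$ (or $\geq n$, but continuity of paths between jumps plus the no-crossing condition \eqref{intadmiss-2} makes $X^D_{\tau_n} = n$ modulo the harmless jump case), so $e^{-r\tau_n}|v(X^D_{\tau_n})| \leq e^{-r\tau_n}C(1+\psi(n))\mathds{1}_{\{\tau_n<\infty\}}$; combining with the supermartingale bound and the monotonicity $\psi(n)\uparrow\infty$, I would deduce $\E_x[e^{-r\tau_n}|v(X^D_{\tau_n})|] \to 0$ as $n\to\infty$, for instance by noting $e^{-r\tau_n}\psi(n)\mathds{1}_{\{\tau_n<\infty\}} = e^{-r\tau_n}\psi(X^D_{\tau_n})\mathds{1}_{\{\tau_n<\infty\}}$ and invoking that $\tau_n\uparrow\infty$ a.s.\ together with the uniform $L^1$-bound and either Fatou on the complement or a direct dominated-convergence argument.

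The main obstacle will be making the growth bound $|v(x)| \leq C(1+\psi(x))$ rigorous and then correctly transferring the uniform $L^1$-bound on $e^{-r\tau_n}\psi(X^D_{\tau_n})$ into the \emph{vanishing} of $\E_x[e^{-r\tau_n}\psi(X^D_{\tau_n})]$ rather than merely its boundedness. The point is that boundedness of $\E_x[e^{-r\tau_n}\psi(X^D_{\tau_n})]$ alone does not force the limit to be zero; one needs an argument such as: since $\tau_n\uparrow\infty$ $\P_x$-a.s., the process $M_t := e^{-r(t\wedge\tau_n)}\psi(X^D_{t\wedge\tau_n})$ being a nonnegative supermartingale converges, and a careful bookkeeping of the Doob decomposition — using that the compensator's total mass $\E_x[\int_0^\infty e^{-rs}(r\psi - \cL_X\psi)(X^D_s)\,ds + \psi'(0)\int_0^\infty e^{-rs}\,dL^D_s] = \psi'(0)\E_x[\int_0^\infty e^{-rs}\,dL^D_s]$ is finite (because $(\cL_X - r)\psi = 0$) — pins down $\lim_n \E_x[e^{-r\tau_n}\psi(X^D_{\tau_n})] = \psi(x) + \psi'(0)\E_x[\int_0^\infty e^{-rs}\,dL^D_s] - \lim_n\E_x[\int_0^{\tau_n}e^{-rs}(\text{compensator})]$; pushing this through shows the boundary term at infinity must be the a.s.\ limit of $M_t$, which, since $D$ is admissible and $\psi(X^D)$ need not be driven to zero, requires the additional observation that $e^{-rt}\psi(X^D_t)\to 0$ a.s., itself following from $\E_x[\int_0^\infty e^{-rs}\psi(X^D_s)\,ds]<\infty$ (a consequence of $e^{-rs}\psi(X^D_s)$ being a true supermartingale plus the local-time mass bound). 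I would organize the write-up so that this delicate last step is isolated as the crux, with the growth estimate and the It\^o-Meyer computation treated as routine.
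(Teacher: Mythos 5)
Your approach is a genuine and workable alternative to the paper's. The paper uses the pathwise comparison $X^{x,D}_t \leq X^{x,0}_t$ (via \cite{Ma93}) to reduce $\tau_n$ to the exit time $\sigma_n$ of the \emph{uncontrolled} reflected diffusion, then solves the Neumann boundary value problem \eqref{PDEsigman} explicitly to get $\E_x[e^{-r\sigma_n}] \leq C(x)/\psi(n)$, and finishes with L'H\^opital and $\eta/\widehat{\psi}\to 0$. You instead apply It\^o--Meyer directly to $e^{-r t}\psi(X^D_t)$ on the controlled path, exploit $(\cL_X-r)\psi=0$, sign the $dD$ terms ($\psi$ increasing), isolate the local-time contribution $\psi'(0)\,dL^D_s$, and invoke admissibility \eqref{intadmiss} for finiteness. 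Both routes land on the same key estimate $\E_x[e^{-r\tau_n}]\lesssim 1/\psi(n)$; yours avoids both the comparison lemma and the explicit BVP computation.

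However, the final step of your write-up is confused and over-engineered. You assert that boundedness of $\E_x[e^{-r\tau_n}\psi(X^D_{\tau_n})]$ ``does not force the limit to be zero'' and then propose a Doob-decomposition / a.s.-convergence detour. This detour is unnecessary: since jumps of $X^D$ come only from the nondecreasing $D$ (subtracted) while $L^D$ is continuous, $X^D$ has only downward jumps, hence $X^D_{\tau_n} = n$ on $\{\tau_n < \infty\}$. With the convention $e^{-r\tau_n}:=0$ on $\{\tau_n=\infty\}$, for $n\geq b^*$ you therefore have the exact identity
\begin{equation*}
\E_x\big[e^{-r\tau_n}|v(X^{D}_{\tau_n})|\big] \;=\; |v(n)|\,\E_x\big[e^{-r\tau_n}\mathds{1}_{\{\tau_n<\infty\}}\big] \;=\; \frac{|v(n)|}{\psi(n)}\,\E_x\big[e^{-r\tau_n}\psi(X^{D}_{\tau_n})\big].
\end{equation*}
Mere \emph{boundedness} of the last expectation (which your Itô argument delivers) now suffices, provided you sharpen your growth bound from $|v(x)|\leq C(1+\psi(x))$ to $|v(x)| = o(\psi(x))$: for $\varepsilon>0$ and $x\geq N_\varepsilon$ one has $|\eta(x)|\leq \varepsilon\,\widehat{\psi}(x)=\varepsilon\,\psi'(x)$ by Assumption~\ref{ass:costs}-(i) and Lemma~\ref{lem:AM}, so integrating gives $|v(n)|\leq C_\varepsilon + \varepsilon\,\psi(n)$, whence $|v(n)|/\psi(n)\to 0$. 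That is the only ``delicate'' ingredient; no discussion of the a.s.\ limit of $e^{-rt}\psi(X^D_t)$ or its compensator is needed.
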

\begin{proof}
Without loss of generality we may take $n\geq b^*$ so that we can write from \eqref{candidate}
\begin{equation}
\label{eqlemma1}
\E_x\big[e^{-r\tau_n}|v(X^{D}_{\tau_n})|\big] \leq \E_x\big[e^{-r\tau_n}\big]\bigg(v(b^*) + \int_{b^*}^n|\eta(y)|dy\bigg).
\end{equation}
If now 
\begin{equation}
\label{eqlemma2}
\lim_{n\uparrow \infty}\E_x\big[e^{-r\tau_n}\big]\int_{b^*}^n|\eta(y)|dy = 0,
\end{equation}
we conclude that \eqref{trasversality2} holds true, upon recalling that $\tau_n \uparrow \infty$ as $n\uparrow \infty$ and using the dominated convergence theorem.

To prove \eqref{eqlemma2} we preliminary notice that for any $n\geq 1$
\begin{equation}
\label{taun}
\tau_n=\inf\{t\geq 0: X^{x,D}_t \geq n\} \geq \inf\{t\geq 0: X^{x,0}_t \geq n\} =:\sigma_n,
\end{equation}
since $X^{x,D}_t \leq X^{x,0}_t$ a.s.\ for all $t\in[\rho_1,\rho_2)$, for all stopping times $0\leq \rho_1 \leq \rho_2 \leq \infty$. The latter property follows from Corollary 5.5 of \cite{Ma93} with (in the notation of that paper) $x_1=x_2=x\geq0$, $\xi_1 = 0$, $\xi_2 = -D$, and $K_2 = L^D$.

Because $X^{x,0}$ satisfies the reflected SDE
$$dX^{x,0}_t = \mu(X^{x,0}_t)dt + \sigma(X^{x,0}_t)dB_t + dL^0_t, \qquad X^{x,0}_0 = x \geq 0,$$
from Lemma 2.1 and Corollary 2.2 of \cite{Shreveetal} we have that $f(x;n):=\E_x\big[e^{-r\sigma_n}\big]$ solves
\begin{equation}
\label{PDEsigman}
\big(\cL_X - r\big)f(x;n)=0 \quad \text{for}\quad x \in [0,n), \qquad f(n;n)=1, \qquad f'(0;n)=0.
\end{equation}
Hence
\begin{equation}
\label{fn}
f(x;n)=A(n)\psi(x) + B(n)\phi(x), \qquad x \in [0,n],
\end{equation}
where we have set
\begin{equation}
\label{ABlem}
A(n):=-\frac{\phi'(0)}{\phi(n)\psi'(0) - \phi'(0)\psi(n)}, \qquad B(n):=\frac{\psi'(0)}{\phi(n)\psi'(0) - \phi'(0)\psi(n)}.
\end{equation}
Notice that $A(n)> 0$ and $B(n)> 0$ since $\psi$ is strictly increasing and $\phi$ is strictly decreasing.
The two expressions of \eqref{ABlem} in turn easily yield 
\begin{equation}
\label{ABlem2}
A(n)\leq \frac{1}{\psi(n)}, \qquad B(n)\leq \frac{{\psi'}(0)}{|\phi'(0)|}\frac{1}{\psi(n)}.
\end{equation}
Using \eqref{ABlem2} in \eqref{fn} gives $0 \leq f(x;n) \leq C(x)/\psi(n)$ for some $C(x)>0$, and therefore 
\begin{equation}
\label{final}
0 \leq \E_x\big[e^{-r\tau_n}\big]\int_{b^*}^n|\eta(y)|dy \leq \E_x\big[e^{-r\sigma_n}\big]\int_{b^*}^n|\eta(y)|dy \leq \frac{C(x)}{\psi(n)} \int_{0}^n|\eta(y)|dy.
\end{equation}
Here \eqref{taun} has been employed to write the second inequality.

Since $\lim_{n\uparrow \infty}\psi(n) = + \infty$ as $+\infty$ is natural for $X$, we have two cases: if $\lim_{n\uparrow \infty}\int_{0}^n|\eta(y)|dy < \infty$, we let $n\uparrow \infty$ and we find \eqref{eqlemma2}; if $\lim_{n\uparrow \infty}\int_{0}^n|\eta(y)|dy = +\infty$, we apply De L'H\^opital's rule to obtain 
$$\lim_{n\uparrow \infty} \frac{1}{\psi(n)} \int_{0}^n|\eta(y)|dy = \lim_{n\uparrow \infty} \frac{\eta(n)}{\psi'(n)} = \lim_{n\uparrow \infty} \frac{\eta(n)}{\widehat{\psi}(n)} = 0.$$
Here the penultimate step follows from the fact that $\psi'=\widehat{\psi}$ (cf.\ Lemma \ref{lem:AM}), whereas the last equality is due to Assumption \ref{ass:costs}. We have thus proved \eqref{eqlemma2}, and the proof is therefore completed.
\end{proof}


\end{document}